\newtheorem{theorem}{Theorem}[section]
\newtheorem{proposition}{Proposition}[section]
\newtheorem{lemma}{Lemma}[section]
\newtheorem{definition}{Definition}
\newtheorem{remark}{Remark}
\newtheoremstyle{named}{}{}{\itshape}{}{\bfseries}{}{.5em}{\thmnote{#3}}
\theoremstyle{named}
\newtheorem*{namedtheorem}{Theorem}
\title{}
\date{}
\begin{document}
	\begin{center}
		\Large Dimension-free estimates on distances between subsets of volume $\varepsilon$ inside a unit-volume body
	\end{center}

	Abdulamin Ismailov\footnote{E-mail: \href{mailto:nameundefinednamovich@gmail.com}{\nolinkurl{nameundefinednamovich@gmail.com}}} \qquad Alexei Kanel-Belov \qquad Fyodor Ivlev

	\begin{abstract}
		Average distance between two points in a unit-volume body $K \subset \mathbb{R}^n$ tends to infinity as $n \to \infty$.  However, for two small subsets of volume $\varepsilon > 0$ the situation is different. For unit-volume cubes and euclidean balls the largest distance is of order $\sqrt{-\ln \varepsilon}$, for simplexes and hyperoctahedrons -- of order $-\ln \varepsilon$, for $\ell_p$ balls with $p \in [1;2]$ -- of order $(-\ln \varepsilon)^{\frac{1}{p}}$. These estimates are not dependent on the dimensionality $n$. The goal of the paper is to study this phenomenon. Isoperimetric inequalities will play a key role in our approach.
	\end{abstract}

	\tableofcontents

	\newpage

	\section{Introduction.}

	In high dimensions we observe a variety of different phenomena. For example, Vladimir Igorevich Arnold liked to ask his students the following question: <<What percent of the overall mass is occupied by the pulp of the $100$-dimensional watermelon of diameter $1$ meter, if the crust is of width $1$ centimeter?>> The answer is approximately $1 - e^{-1}$. This question in a simple way demonstrates the concentration of measure phenomenon: how most of the mass of a body could lie inside a thin shell. Here is another example, the volume of a euclidean ball of radius $2023$ tends to $0$ as the dimensionality goes to infinity. More generally, we have the isodiametric inequality, which suggests that in high dimensions the diameter of a unit-volume body shall become arbitrarily large.

	The goal of this paper is to achieve a better understanding how things work in high-dimensional spaces by studying the following phenomenon: two points in a unit-volume convex body could be at an arbitrarily large distance from each other; consider, for example, the unit cubes $(0; 1)^n$ -- as $n$ tends to $+\infty$ the diameter equal to $\sqrt{n}$ also tends to infinity, similarly, the average distance would be of order at least $\sqrt{n}$(see Appendix \ref{av_dist}); even the distance between a point and a subset of some fixed volume $\varepsilon < 1$ could be arbitrarily big, but it turns out that the distance between two subsets of some fixed volume $\varepsilon > 0$ in the unit cube is bounded above by some constant dependent on $\varepsilon$ but not on the dimension $n$. What about convex bodies other than the unit cubes?

	Consider a family of unit-volume bounded convex bodies $K_n$. For each $K_n$ it makes sense to consider the supremum of all possible distances between two subsets of some fixed volume $\varepsilon \in (0; \frac{1}{2})$. Denote this value by $d_n(\varepsilon)$.

	By $\Phi$ we mean the function
	\[
		\Phi(a) = \int_{-\infty}^{a} e^{-\pi x^2} dx
	\]
	Function $\Phi^{-1}(\varepsilon)$ is asymptotically equivalent to
	\[
		-\frac{1}{\sqrt{\pi}}\sqrt{-\ln \varepsilon}
	\]
	as $\varepsilon$ tends to $0$(see Appendix \ref{asymp_phi_inv}).

	\begin{namedtheorem}[Theorem \ref{ball_exact}.]
		When $K_n$ are the unit-volume euclidean balls
		\[
			\lim_{n \to \infty} d_n(\varepsilon) = -2\frac{1}{\sqrt{e}} \Phi^{-1}(\varepsilon)
		\]
	\end{namedtheorem}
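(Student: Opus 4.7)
The plan is to reduce the $n$-dimensional distance problem to a one-dimensional distributional calculation via projection onto a coordinate axis, and to combine an explicit cap construction (lower bound) with an isoperimetric inequality for the Euclidean ball (upper bound).

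First, I compute the asymptotic marginal of the uniform measure on $K_n$. Writing $K_n$ as the Euclidean ball of radius $r_n = \omega_n^{-1/n}$, where $\omega_n = \pi^{n/2}/\Gamma(n/2+1)$ is the volume of the standard unit ball, the density of the first coordinate of a uniform random point is
\[
    f_n(x) = \omega_{n-1}(r_n^2 - x^2)^{(n-1)/2} \mathbf{1}_{\{|x| \le r_n\}}.
\]
Stirling's formula gives $r_n \sim \sqrt{n/(2\pi e)}$ and $\omega_{n-1}/(\omega_n r_n) \to \sqrt{e}$; combined with $(1-t/n)^n \to e^{-t}$ this yields $f_n(x) \to \sqrt{e}\, e^{-\pi e x^2}$ pointwise. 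Since $\sqrt{e}\, e^{-\pi e x^2}$ is the density of $Y/\sqrt{e}$ for $Y$ with density $e^{-\pi y^2}$, the distribution functions $F_n$ converge to $a \mapsto \Phi(\sqrt{e}\, a)$, whence $F_n^{-1}(\varepsilon) \to \Phi^{-1}(\varepsilon)/\sqrt{e}$ for each $\varepsilon \in (0, 1/2)$.

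For the lower bound, set $a_n := F_n^{-1}(\varepsilon) < 0$ and take the antipodal caps $A := \{x \in K_n : x_1 \le a_n\}$ and $A' := \{x \in K_n : x_1 \ge -a_n\}$, each of volume $\varepsilon$. Their Euclidean distance equals $-2a_n$ (the infimum is achieved by the pair $(\pm a_n, 0, \ldots, 0)$), so $\liminf_{n \to \infty} d_n(\varepsilon) \ge -\frac{2}{\sqrt{e}}\Phi^{-1}(\varepsilon)$.

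For the matching upper bound, I invoke an isoperimetric inequality for $K_n$: for every $\delta > 0$ and every Borel $A \subset K_n$ of volume $\varepsilon$, the cap $C(a_n) := \{x_1 \le a_n\} \cap K_n$ minimises $\text{vol}(A_\delta \cap K_n)$. A slicewise computation at $x_1 = a_n + s$, comparing the $K_n$-radius $\sqrt{r_n^2 - (a_n + s)^2}$ with the neighborhood radius $\sqrt{r_n^2 - a_n^2} + \sqrt{\delta^2 - s^2}$, shows that at the critical width $\delta = -2a_n$ the $K_n$-constraint is everywhere binding, giving the exact identity $C(a_n)_{-2a_n} \cap K_n = \{x_1 \le -a_n\} \cap K_n$ of volume $1 - \varepsilon$; for any $\delta > -2a_n$, strict inclusion gives $\text{vol}(C(a_n)_\delta \cap K_n) > 1 - \varepsilon$. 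If $A, A' \subset K_n$ both have volume $\varepsilon$ and $d(A, A') > -2a_n$, disjointness forces $\text{vol}(A_\delta \cap K_n) \le 1 - \varepsilon$ for some $\delta > -2a_n$, which combined with the isoperimetric inequality gives the contradiction $\text{vol}(C(a_n)_\delta \cap K_n) \le 1 - \varepsilon$. Hence $d_n(\varepsilon) \le -2a_n$, and passing to the limit yields $\limsup_{n \to \infty} d_n(\varepsilon) \le -\frac{2}{\sqrt{e}}\Phi^{-1}(\varepsilon)$.

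The principal obstacle is the isoperimetric inequality for $K_n$. One approach is Steiner symmetrisation with respect to hyperplanes through the origin: since $K_n$ is invariant under such reflections, symmetrising any $A \subset K_n$ keeps it inside $K_n$ and does not increase $\text{vol}(A_\delta \cap K_n)$; iterating over all hyperplanes containing the $x_1$-axis reduces the problem to sets rotationally symmetric about that axis, which may then be compared with caps by a one-dimensional rearrangement. An alternative is to transfer the problem to the boundary sphere $\partial K_n$ of radius $r_n$, apply L\'evy's spherical isoperimetric theorem, and control the error from the thin shell in which the uniform measure on $K_n$ asymptotically concentrates. The sharp constant $\sqrt{e}$ is inherited from the marginal computation of the first paragraph.
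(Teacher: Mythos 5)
Your lower bound is fine and is essentially the paper's Theorem \ref{ball_sup}: antipodal hyperplane cuts at level $F_n^{-1}(\varepsilon)$ together with the convergence of the one-dimensional marginal to the Gaussian density $\sqrt{e}\,e^{-\pi e x^2}$. The gap is in the upper bound, which you rest entirely on the claim that for \emph{every} $\delta>0$ the flat cap $C(a_n)=\{x_1\le a_n\}\cap K_n$ minimises $\operatorname{vol}(A_\delta\cap K_n)$ among sets $A\subset K_n$ of volume $\varepsilon$. That claim is false at least for small $\delta$: by the classification of isoperimetric regions in a ball (used in the paper via \cite[Theorems 1 and 5]{ros}), the minimisers of relative perimeter for volume $\varepsilon\neq\frac12$ are regions bounded by spherical caps meeting $\partial K_n$ orthogonally, and an off-center flat disk is not even a stationary free-boundary hypersurface, so it has strictly larger relative perimeter; consequently, for small $\delta$ the orthogonal spherical cap of the same volume has strictly smaller enlargement than $C(a_n)$. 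Your contradiction argument invokes the claimed minimality at $\delta>-2a_n$, a regime where no such extremality statement is known, and the two routes you sketch do not deliver it: two-point/Steiner symmetrisation arguments in the ball lead precisely to the orthogonal-spherical-cap extremals, not flat caps (the ``one-dimensional rearrangement'' step is exactly where this breaks), and the transfer to the boundary sphere with L\'evy's inequality is left without the error control that would be the substance of the proof.

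The paper circumvents this by never claiming flat caps are extremal at finite $n$. It uses the true extremals (lenses cut by orthogonal balls), shows quantitatively (Theorem \ref{iso_in_ball}, with the $\tau,\delta_1,\delta_2$ bookkeeping) that their volume and free surface area are asymptotically those of hyperplane cuts, i.e. $I_{\mu}(\Psi(t)+\tau)\ge\Psi'(t)$ for large $n$, and then converts this isoperimetric bound into an upper bound on $\delta$-enlargement times by an ODE comparison (Theorem \ref{euc_ball_dist}), obtaining $\limsup_n d_n(\varepsilon)\le -\tfrac{2}{\sqrt e}\Phi^{-1}(\varepsilon)$. If you want to keep your scheme, you would need either to prove your enlargement-extremality claim for the relevant (large) $\delta$ — which is stronger than anything cited and is not established here — or to replace it, as the paper does, by the known isoperimetric profile bound plus a comparison/integration argument; your slicewise identity $C(a_n)_{-2a_n}\cap K_n=\{x_1\le -a_n\}\cap K_n$ is a correct computation for large $n$ but by itself only evaluates the enlargement of one particular competitor, not of an arbitrary $A$.
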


	\begin{namedtheorem}[Theorems \ref{cube_upper} and \ref{cubes}.]
		When $K_n$ are the unit cubes we have
		\[
			-2\sqrt{\frac{\pi}{6}} \Phi^{-1}(\varepsilon) \leq \liminf_{n \to \infty} d_n(\varepsilon) \leq \limsup_{n \to \infty} d_n(\varepsilon) \leq -2\Phi^{-1}(\varepsilon)
		\]
	\end{namedtheorem}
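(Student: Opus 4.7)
The statement is really two independent estimates, and I would prove the upper and lower bounds by quite different methods.

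\medskip

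\noindent\textbf{Upper bound} (Theorem~\ref{cube_upper}). The plan is to establish a Gaussian-type isoperimetric inequality for the cube, namely
\[
\Phi^{-1}(|A_r|)\;\geq\;\Phi^{-1}(|A|)+r
\qquad\text{for every measurable } A\subseteq[0,1]^n,
\]
where $A_r$ is the open Euclidean $r$-neighborhood. This will be an integrated form of the Bobkov functional inequality
\[
J\bigl(\textstyle\int_0^1 f\,dx\bigr)\;\leq\;\int_0^1\sqrt{J(f)^2+(f')^2}\,dx
\]
for smooth $f:[0,1]\to[0,1]$, with $J=\varphi\circ\Phi^{-1}$ and $\varphi(x)=e^{-\pi x^2}$, combined with the usual tensorization of Bobkov-type inequalities over product spaces. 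The one-dimensional version has the \emph{sharp} constant $1$ because $\max J = J(1/2)=1$ matches the perimeter $1$ of a half-interval. Plugging a smoothed indicator of $A_r$ into the tensorized inequality yields the differential inequality $\tfrac{d}{dr}|A_r|\geq J(|A_r|)$, and since $(\Phi^{-1})'(u)=1/J(u)$, integrating delivers the display.

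Granted this, let $A,B\subseteq[0,1]^n$ with $|A|=|B|=\varepsilon$ and assume $d(A,B)>2r$. Then $A_r\cap B_r=\emptyset$, and
\[
2\,\Phi\bigl(\Phi^{-1}(\varepsilon)+r\bigr)\;\leq\;|A_r|+|B_r|\;\leq\;1,
\]
which forces $\Phi^{-1}(\varepsilon)+r\leq 0$, i.e.\ $r\leq-\Phi^{-1}(\varepsilon)$, and hence $d(A,B)\leq-2\Phi^{-1}(\varepsilon)$, uniformly in $n$.

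\medskip

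\noindent\textbf{Lower bound} (Theorem~\ref{cubes}). I would exhibit an explicit pair attaining it. Put
\[
A_n=\Bigl\{x\in[0,1]^n:\textstyle\sum_{i=1}^n x_i\leq a_n\Bigr\},
\qquad
B_n=\Bigl\{x\in[0,1]^n:\textstyle\sum_{i=1}^n x_i\geq n-a_n\Bigr\},
\]
with $a_n$ chosen so $|A_n|=|B_n|=\varepsilon$. Since $\sum X_i$ for i.i.d.\ $X_i\sim U[0,1]$ is Irwin--Hall with mean $n/2$ and variance $n/12$, the central limit theorem gives
\[
a_n=\tfrac{n}{2}+\Phi_s^{-1}(\varepsilon)\sqrt{n/12}+o(\sqrt{n}),
\]
with $\Phi_s$ the standard normal CDF. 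The minimum Euclidean distance between the parallel hyperplanes $\sum x_i=a_n$ and $\sum x_i=n-a_n$ equals $(n-2a_n)/\sqrt{n}$, and it is attained inside the cube by the diagonal pair $(a_n/n,\dots,a_n/n)$ and $((n-a_n)/n,\dots,(n-a_n)/n)$. Using $\Phi^{-1}(\varepsilon)=\Phi_s^{-1}(\varepsilon)/\sqrt{2\pi}$,
\[
d(A_n,B_n)=\frac{n-2a_n}{\sqrt{n}}\;\longrightarrow\;\frac{-2\Phi_s^{-1}(\varepsilon)}{\sqrt{12}}
=-\Phi^{-1}(\varepsilon)\sqrt{\tfrac{2\pi}{3}}
=-2\sqrt{\tfrac{\pi}{6}}\,\Phi^{-1}(\varepsilon),
\]
which, combined with $d_n(\varepsilon)\geq d(A_n,B_n)$, gives the desired $\liminf$ estimate.

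\medskip

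\noindent\textbf{Main obstacle}. The nontrivial step is establishing the one-dimensional Bobkov inequality on $[0,1]$ with the sharp constant and carrying out the product-space tensorization so that the constant $1$ survives in the resulting enlargement bound. The lower-bound construction is essentially a routine CLT computation once one recognizes the main diagonal as the correct direction to exploit, and the disjoint $r$-neighborhood reduction is standard.
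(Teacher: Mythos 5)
Your proposal is correct, and the two halves relate to the paper differently. The lower bound is essentially the paper's own argument for Theorem \ref{cubes}: cuts by hyperplanes orthogonal to the main diagonal, the Irwin--Hall/CLT computation with variance $\tfrac{n}{12}$, and the conversion $\Phi^{-1}(\varepsilon)=\Phi_s^{-1}(\varepsilon)/\sqrt{2\pi}$; your only cosmetic difference is fixing $a_n$ by exact quantiles (which needs the small remark that quantiles converge under the CLT) instead of the paper's device of choosing a fixed $a$ with $\sqrt{\pi/6}\,\Phi^{-1}(\varepsilon)<-a<0$ and showing the cut volumes exceed $\varepsilon$ for large $n$. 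Your identification of the closest pair of points on the two diagonal cuts is correct, so the constant $-2\sqrt{\pi/6}\,\Phi^{-1}(\varepsilon)$ comes out right.

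For the upper bound your route to the key isoperimetric input is genuinely different from the paper's. The paper (Theorem \ref{mu_iso}) pushes the Gaussian measure $\gamma_n$ forward to the cube through the coordinatewise map $\Phi$, which is $1$-Lipschitz and measure-preserving, and then quotes the $n$-dimensional Gaussian isoperimetric theorem (Theorem \ref{gaussian_iso_ineq}) through the transfer principle of Lemma \ref{transfer}; it then integrates the resulting differential inequality by an ODE comparison, growing $A$ and $B$ separately to measure $\tfrac12$. You instead propose the one-dimensional Bobkov functional inequality on $[0,1]$ with $J=\varphi\circ\Phi^{-1}$, tensorize it over the product, and deduce the integrated enlargement bound $\Phi^{-1}(|A_r|)\geq\Phi^{-1}(|A|)+r$, finishing with the clean disjointness step $|A_r|+|B_r|\leq 1$. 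Both give the same constant $-2\Phi^{-1}(\varepsilon)$, and your closing argument is arguably tidier than the paper's comparison-function step. Two caveats. First, the ingredient you flag as the main obstacle --- the sharp one-dimensional Bobkov inequality for the uniform measure on $[0,1]$ and its dimension-free tensorization --- is true and available in the literature (Bobkov; Barthe--Maurey), but your stated justification for sharpness (that $\max J=J(\tfrac12)=1$ matches the perimeter of a half-interval) is only a consistency check, not a proof; as written this is the one real gap to fill. Second, the most economical proof of that one-dimensional inequality is itself obtained by transporting the Gaussian Bobkov inequality through the very map $\Phi$, so your approach and the paper's are close cousins: the paper works at the level of isoperimetric profiles in dimension $n$ and avoids the functional/tensorization machinery altogether, while your version localizes the input to dimension one at the cost of needing that machinery. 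What your formulation buys is the integrated enlargement inequality in one stroke, which also bypasses the mild regularity fussing the paper's $\partial_+\mu(A_\delta)$ comparison requires.
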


	\begin{namedtheorem}[Theorems \ref{simplex_upper} and \ref{simplex}.]
		When $K_n$ are the unit-volume simplexes we have
		\[
			-\frac{\sqrt{2}}{e} \ln(2\varepsilon) \leq \liminf_{n \to \infty} d_n(\varepsilon) \leq \limsup_{n \to \infty} d_n(\varepsilon) \leq -c \ln \varepsilon
		\]
		for some universal constant $c > 0$ independent of $n$ and $\varepsilon$.
	\end{namedtheorem}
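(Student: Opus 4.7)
The plan has two quite different parts: a concrete construction yielding the lower bound on $\liminf_{n} d_n(\varepsilon)$, and a dimension-free exponential isoperimetric inequality on the simplex for the upper bound. Throughout, I model $K_n$ as $S_n = \{x \in \mathbb{R}^n_{\geq 0} : x_1 + \cdots + x_n \leq L_n\}$ with $L_n = (n!)^{1/n}$, which has unit volume and satisfies $L_n \sim n/e$ by Stirling.

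For the lower bound I would use the two opposite slabs
\[
A_n = \{x \in S_n : x_1 - x_2 \leq -t_n\}, \qquad B_n = \{x \in S_n : x_1 - x_2 \geq t_n\},
\]
with $t_n > 0$ chosen by symmetry and continuity so that $\mathrm{vol}(A_n) = \mathrm{vol}(B_n) = \varepsilon$. Integrating out $x_3, \ldots, x_n$ gives the joint density of $(x_1, x_2)$ as $(L_n - x_1 - x_2)^{n-2}/(n-2)!$, which converges pointwise to $e^2 \exp(-e(x_1 + x_2))$ on $\mathbb{R}^2_{\geq 0}$; thus $(x_1, x_2)$ tends in distribution to two independent $\mathrm{Exp}(e)$ variables, and $x_1 - x_2$ to a Laplace distribution of density $\frac{e}{2} e^{-e|s|}$. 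Consequently $\mathrm{vol}(B_n) \to \frac{1}{2} e^{-e t}$ for each fixed $t > 0$, forcing $t_n \to -e^{-1}\ln(2\varepsilon)$. For the distance, project onto $u = \frac{1}{\sqrt{2}}(1, -1, 0, \ldots, 0)$: any $p \in A_n$ and $q \in B_n$ satisfy $(q - p) \cdot u = \frac{1}{\sqrt{2}}((q_1 - q_2) - (p_1 - p_2)) \geq \sqrt{2}\,t_n$, so $|p - q| \geq \sqrt{2}\,t_n$. Hence $d_n(\varepsilon) \geq \sqrt{2}\, t_n$, and taking $n \to \infty$ gives $\liminf_{n} d_n(\varepsilon) \geq -\frac{\sqrt{2}}{e}\ln(2\varepsilon)$.

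For the upper bound I would aim at the dimension-free one-sided isoperimetric inequality
\[
\mathrm{vol}(A_t) \geq 1 - (1 - \mathrm{vol}(A))\, e^{-\kappa t} \qquad \text{for all } t \geq 0 \text{ and } A \subset S_n,
\]
with $\kappa > 0$ a universal constant. Granted it, if $A, B \subset S_n$ have volume $\varepsilon$ and $d(A, B) = d$, the inclusion $B \subset S_n \setminus A_d$ yields $\varepsilon \leq (1 - \varepsilon)\, e^{-\kappa d}$, whence $d \leq -\kappa^{-1} \ln(\varepsilon/(1-\varepsilon)) \leq -c \ln \varepsilon$ for $\varepsilon \in (0, 1/2)$ and an absolute $c$, which is the target bound. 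The main obstacle is producing such a $\kappa$: the uniform measure $\mu_n$ on $S_n$ is log-concave, and a direct computation of the moments (e.g.\ $E[x_i] = L_n/(n+1)$, $\mathrm{Cov}(x_i, x_j) = -L_n^2/((n+1)^2(n+2))$) shows that the top eigenvalue of its covariance is of order $1/e^2$ uniformly in $n$, so one expects the Cheeger / spectral-gap constant of $\mu_n$ to be absolute. Turning this expectation into an honest universal $\kappa$ — either by appealing to Bobkov / Kannan--Lovász--Simonovits-type inequalities for log-concave measures, or, more concretely, by using the representation $y_i = X_i/(X_1 + \cdots + X_{n+1})$ with $X_i$ i.i.d.\ $\mathrm{Exp}(1)$ to transfer Talagrand's exponential isoperimetric inequality on $\mathbb{R}_{\geq 0}^{n+1}$ to $S_n$ — is where the real work will go.
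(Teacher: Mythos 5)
Your lower bound is essentially sound, though it takes a different route from the paper and works with a different representative body: the paper's $K_n$ is the regular simplex $\omega_n\Delta_n$, and Theorem \ref{simplex} gets the bound purely geometrically, cutting along the perpendicular bisector hyperplane of an edge $PQ$ and shrinking the two halves toward $P$ and $Q$ by homotheties with ratio $(2\varepsilon')^{\frac{1}{n-1}}$, so that the distance $\sqrt{2}\omega_n\left(1-(2\varepsilon')^{\frac{1}{n-1}}\right)\to -\frac{\sqrt2}{e}\ln(2\varepsilon')$. You instead use the corner simplex $S_n$ and slabs $\{x_1-x_2\le -t_n\}$, $\{x_1-x_2\ge t_n\}$ together with the Laplace limit of $x_1-x_2$; the computation is correct and happens to give the same constant $\frac{\sqrt2}{e}$, because $x_1-x_2$ has the same Laplace$(e)$ limit on the regular simplex (via $x_i=\omega_nX_i/\sum_j X_j$ with $X_i$ i.i.d.\ exponential). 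Since non-isometric unit-volume simplexes have different $d_n(\varepsilon)$ in principle, you should either run the same slab argument on $\omega_n\Delta_n$ or note this equivalence explicitly; that is a small, fixable point.

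The upper bound, however, has a genuine gap, in two respects. First, the inequality you posit, $\operatorname{vol}(A_t)\ge 1-(1-\operatorname{vol}(A))e^{-\kappa t}$ for \emph{all} $A$ and a universal $\kappa$, is false for sets of small volume: take $A=\{x\in S_n: x_1-x_2\ge t_0\}$ with $\operatorname{vol}(A)=\varepsilon$; since $x\mapsto x_1-x_2$ is $\sqrt2$-Lipschitz, $A_t\subseteq\{x_1-x_2\ge t_0-\sqrt2\,t\}$, whose volume is asymptotically $\min\bigl(\tfrac12,\ \varepsilon e^{\sqrt2 e t}\bigr)$, and for fixed $t$ this tends to $0$ as $\varepsilon\to0$ while your right-hand side stays $\ge 1-e^{-\kappa t}$. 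You invoke the inequality precisely for a set of measure $\varepsilon$, i.e.\ in the regime where it fails. The correct statement is a linear lower bound on the isoperimetric profile, $\mathcal{I}(s)\ge\kappa\min(s,1-s)$, which yields only $\operatorname{vol}(A_t)\ge\min(\tfrac12,\varepsilon e^{\kappa t})$; one then grows \emph{both} $A$ and $B$ to volume $\tfrac12$ and gets $d\le\frac{2}{\kappa}\ln\frac{1}{2\varepsilon}$, which is exactly how Theorem \ref{simplex_upper} is deduced from Theorem \ref{lambda_iso}. Second, and more importantly, you do not prove the dimension-free constant $\kappa$ at all: the covariance-eigenvalue heuristic is the Kannan--Lov\'asz--Simonovits conjecture, not a theorem, and the ``transfer of Talagrand's exponential inequality via $y_i=X_i/\sum_j X_j$'' is not a routine Lipschitz transfer in the sense of Proposition \ref{transfer} --- by Lemma \ref{lipschitz_constant} the map degenerates where $\Vert x\Vert_1$ is small and near the vertices, and overcoming this (Sodin's cut-off functions, Propositions \ref{small_sets} and \ref{big_sets}, plus the separate log-concavity bound for small sets) is the actual content of the paper's simplex section and of \cite{Sodin2008}. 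Deferring that step defers the substance of the theorem.
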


	\begin{namedtheorem}[Theorems \ref{ell_p_upper} and \ref{l_p_balls}.]
		Fix some $p \in [1;2]$. When $K_n$ are the unit-volume $\ell_p$ balls
		\[
			-2\Psi_p^{-1}(\varepsilon) \leq \liminf_{n \to \infty} d_n(\varepsilon) \leq \limsup_{n \to \infty} d_n(\varepsilon) \leq C_p (-\ln \varepsilon)^{\frac{1}{p}},
		\]
		where $C_p$ is some universal constant determined by $p$, and function $-2\Psi_p^{-1}(\varepsilon)$(see Appendix \ref{v_n_s_n}) is asymptotically equivalent to
		\[
			\frac{1}{e^{\frac{1}{p}} \Gamma\left(1 + \frac{1}{p}\right)} (-\ln \varepsilon)^{\frac{1}{p}}
		\]
		as $\varepsilon \to 0$.
	\end{namedtheorem}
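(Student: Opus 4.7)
The statement bundles together a lower bound of order $-2\Psi_p^{-1}(\varepsilon)$ and an upper bound of order $(-\ln\varepsilon)^{1/p}$. I would handle them in parallel with the treatment of the Euclidean ball and cube earlier in the paper: the lower bound by exhibiting two coordinate half-space caps and invoking a limit theorem for the one-dimensional marginal of the uniform measure on a scaled $\ell_p$ ball, the upper bound by integrating a dimension-free isoperimetric inequality via the Gromov--Milman enlargement argument.

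\textbf{Lower bound.} Let $\mu_n$ denote the uniform measure on the unit-volume $\ell_p$ ball $K_n \subset \mathbb{R}^n$. By the Schechtman--Zinn polar representation --- a uniform point on $K_n$ can be written as a rescaled ratio $r_n U^{1/n} (Y_1,\dots,Y_n)/\|Y\|_p$ with $Y_i$ i.i.d.\ $p$-generalized Gaussians of density $\propto e^{-|t|^p/p}$, $U$ independent uniform on $(0,1)$, and $r_n = \mathrm{vol}(B_p^n)^{-1/n}$ --- the density of the coordinate $x_1$ under $\mu_n$ converges pointwise to a density $\propto e^{-\kappa_p |t|^p}$, with $\kappa_p$ pinned down by the unit-volume normalization. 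Its CDF is the function $\Psi_p$ of Appendix~\ref{v_n_s_n}. Choosing the two caps $A_n = \{x \in K_n : x_1 \le a_n\}$ and $B_n = \{x \in K_n : x_1 \ge -a_n\}$ with $a_n$ selected so that $\mu_n(A_n) = \mu_n(B_n) = \varepsilon$, we get $\mathrm{dist}(A_n, B_n) = -2a_n \to -2\Psi_p^{-1}(\varepsilon)$ by weak convergence, giving the $\liminf$ bound. The asymptotic equivalent $\frac{1}{e^{1/p}\Gamma(1+1/p)}(-\ln\varepsilon)^{1/p}$ then follows by inverting $\Psi_p$ as $\varepsilon \to 0$, in the same spirit as Appendix~\ref{asymp_phi_inv}.

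\textbf{Upper bound.} For $p \in [1,2]$ the body $K_n$ is convex, so $\mu_n$ is log-concave, and there is a dimension-free isoperimetric inequality whose profile satisfies
\[
I_{\mu_n}(t) \ge c_p \min(t, 1-t) \left(\log \tfrac{e}{\min(t, 1-t)}\right)^{1 - 1/p}
\]
with a constant $c_p > 0$ independent of $n$. This is the $\ell_p$-analogue of Sodin's theorem for the cube and interpolates between Gaussian concentration (at $p = 2$, already used for the Euclidean ball) and exponential concentration (at $p = 1$, used for the hyperoctahedron). Integrating this profile along the enlargement $t \mapsto \mathrm{vol}(A^{(t)})$ by the Gromov--Milman method, one deduces that for any measurable $A$ with $\mathrm{vol}(A) = \varepsilon$, the enlargement $A^{(r)}$ has volume at least $1/2$ as soon as $r \ge C_p(-\ln\varepsilon)^{1/p}$. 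Applied to any two disjoint sets of volume $\varepsilon$ separated by distance $d$, the $(d/2)$-enlargements cannot both exceed volume $1/2$, so $d \le 2 C_p(-\ln\varepsilon)^{1/p}$, which is the required $\limsup$ bound.

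\textbf{Main obstacle.} The technical heart of the proof is the dimension-free isoperimetric inequality with a profile of order $t(\log 1/t)^{1-1/p}$: the constant $c_p$ must not depend on $n$, and the elementary CLT-style routes used for the cube no longer suffice once $p < 2$. The cleanest path I would pursue is to first establish the analogous inequality for the product measure $\mu_p^{\otimes n}$ on $\mathbb{R}^n$ via the Latała--Oleszkiewicz / Barthe--Cattiaux--Roberto modified log-Sobolev inequality, and then transfer it to $\mu_n$ through the Schechtman--Zinn polar decomposition, checking that conditioning on the $\ell_p$-sphere preserves the profile up to absolute constants. Once this dimension-free profile is in hand, the rest of the argument is essentially identical to the Euclidean-ball case.
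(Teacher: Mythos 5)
Your proposal is correct and follows essentially the same route as the paper: the lower bound is obtained, exactly as in Theorem \ref{l_p_balls}, from two coordinate caps plus convergence of the one-dimensional marginal to $\Psi_p$ (the paper computes the section volumes directly in Appendix \ref{v_n_s_n} rather than via the Schechtman--Zinn representation, but this is the same idea), and the upper bound is the same enlargement/ODE-comparison argument driven by the dimension-free isoperimetric profile of order $t(\log \frac{1}{t})^{1-\frac{1}{p}}$. The only deviation is that the paper simply invokes Sodin's Theorem 1 from \cite{Sodin2008} (rescaled to the unit-volume ball in Theorem \ref{ell_iso}) rather than re-deriving that inequality via Barthe--Cattiaux--Roberto and a transfer, which is in any case essentially Sodin's own proof.
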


	A version of our problem, in which the euclidean distance is replaced by the Manhattan distance, can be approached by discretization.

	\begin{namedtheorem}[Theorem \ref{discrete_cube}.]
		If by $d_n(\varepsilon)$ we denote the largest Manhattan distance between two bodies of volume $\varepsilon \in (0; \frac{1}{2})$ in the unit cube $[0; 1]^n$, then
		\[
			\lim_{n \to \infty} \frac{d_n(\varepsilon)}{\sqrt{n}} = -2\sqrt{\frac{\pi}{6}}\Phi^{-1}(\varepsilon)
		\]
	\end{namedtheorem}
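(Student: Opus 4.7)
The plan is to establish both bounds of the asserted limit, using an explicit construction for the lower bound and a discrete isoperimetric inequality for the upper bound, as hinted by the remark preceding the statement.

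\textbf{Lower bound.} I would take opposite sum-of-coordinates half-spaces
\[
A = \Bigl\{x \in [0,1]^n : \sum_{i=1}^n x_i \leq a_n\Bigr\}, \qquad B = \Bigl\{x \in [0,1]^n : \sum_{i=1}^n x_i \geq b_n\Bigr\},
\]
with $a_n, b_n$ chosen so that $\operatorname{vol}(A) = \operatorname{vol}(B) = \varepsilon$. The sum $\sum_i x_i$ is Irwin--Hall with mean $n/2$ and variance $n/12$, so the central limit theorem yields
\[
a_n = \tfrac{n}{2} + \sqrt{\tfrac{n}{12}}\,\Phi_0^{-1}(\varepsilon) + o(\sqrt n), \qquad b_n = \tfrac{n}{2} - \sqrt{\tfrac{n}{12}}\,\Phi_0^{-1}(\varepsilon) + o(\sqrt n),
\]
where $\Phi_0$ is the standard Gaussian CDF. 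For every $x \in A$ and $y \in B$,
\[
\|x-y\|_1 \geq \sum_i (y_i - x_i) \geq b_n - a_n,
\]
so $d_n(\varepsilon) \geq b_n - a_n$. Since the paper's $\Phi$ is the CDF of $\mathcal{N}(0, 1/(2\pi))$, one has $\Phi_0^{-1}(\varepsilon) = \sqrt{2\pi}\,\Phi^{-1}(\varepsilon)$, and dividing by $\sqrt n$ produces the desired $-2\sqrt{\pi/6}\,\Phi^{-1}(\varepsilon)$ as $\liminf$.

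\textbf{Upper bound via discretization.} For a large integer $N$, I partition $[0,1]^n$ into $N^n$ cubes of side $1/N$. Any set of volume $\varepsilon$ approximates, with error of order $1/N$ in both volume and distance, a cellular set of size $\varepsilon N^n$ in the grid $\{0,1,\dots,N-1\}^n$ equipped with Manhattan distance. On this grid the vertex-isoperimetric problem is resolved by Bollob\'as--Leader: among subsets of a given cardinality the sets $\{x : \sum_i x_i \leq t\}$ minimize the size of every $r$-neighborhood. Equivalently, the thermometer encoding $k \mapsto (\underbrace{1,\dots,1}_{k}, \underbrace{0,\dots,0}_{N-1-k})$ embeds $(\{0,\dots,N-1\}^n, d_1)$ isometrically into the Hamming cube $\{0,1\}^{n(N-1)}$, reducing matters to Harper's theorem. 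Hence the maximum Manhattan distance between two $\varepsilon N^n$-element subsets is attained, up to lower-order error, by the discrete analogues of the sum-level sets above. The CLT on the rescaled grid, whose per-coordinate variance $(N^2-1)/(12N^2)$ tends to $1/12$, produces the same asymptotic $-2\sqrt{\pi n / 6}\,\Phi^{-1}(\varepsilon)$; letting $N \to \infty$ first and then $n \to \infty$ closes the limit.

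\textbf{Main obstacle.} The delicate point is that the thermometer embedding is not density-preserving: its image occupies only an $N^n / 2^{n(N-1)}$ fraction of the ambient Hamming cube, so Harper's inequality does not pull back automatically. I would therefore need either the Bollob\'as--Leader inequality for the grid directly, or a compression argument tailored to the embedded coordinates. A secondary technical nuisance is making the joint CLT uniform in $N$ and $n$: one has to pick $N = N(n) \to \infty$ slowly enough that the $O(1/N)$ per-coordinate discretization error in distance, the boundary loss of order $n/N$ in volume, and the Berry--Esseen rate in $n$ all vanish without degrading the sharp constant $\sqrt{\pi/6}$. Once these nuisances are controlled, the lower and upper bounds coincide and give the claimed limit.
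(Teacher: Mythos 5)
Your strategy is essentially the paper's: the lower bound via opposite slabs orthogonal to the main diagonal plus the central limit theorem is exactly the construction of Theorem \ref{cubes} (and your constant conversion $\Phi_0^{-1}(\varepsilon)=\sqrt{2\pi}\,\Phi^{-1}(\varepsilon)$ is right), and the upper bound via discretization plus a vertex-isoperimetric inequality on the grid is the paper's route as well. The ``main obstacle'' you flag is not actually an obstacle: the inequality you say you would need ``for the grid directly'' is precisely what Bollob\'as and Leader prove for $[k]^n$ in \cite{BOLLOBAS199147} (Corollaries 9 and 10 there; the latter is quoted as Theorem \ref{discrete_distance}): initial segments of the simplicial order minimize every $t$-boundary, and consequently the extremal pair for the two-set distance problem is the first $r$ and last $s$ elements of that order. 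So the thermometer-embedding/Harper detour, which as you correctly observe does not pull back, is simply unnecessary.

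Where your write-up has a genuine soft spot is the reduction of arbitrary volume-$\varepsilon$ sets to ``cellular sets with error $O(1/N)$ in volume and distance'' and the ensuing joint limit $N=N(n)$. Replacing $A$ by a union of grid cells moves the distance in the wrong direction (a cellular superset only decreases the distance, while a cellular subset may lose a non-negligible amount of volume), and the uniform-in-$n$ Berry--Esseen bookkeeping you describe is needed only because of this set-up. The paper's argument sidesteps both issues: pick $a$ with $-a<\sqrt{\pi/6}\,\Phi^{-1}(\varepsilon)$, so that by a single continuum CLT application the comparison slabs $V_n,W_n$ (at Manhattan distance $2a\sqrt{n}$) have volume strictly less than $\varepsilon$ for all $n>N$; then, for such a fixed $n$ and arbitrary $A,B$ of volume $\varepsilon$, intersect with the lattice $L_m$ --- this can only increase the infimum distance --- and note that for $m$ large the counts $|A\cap L_m|,|B\cap L_m|$ dominate $|V_n\cap L_m|,|W_n\cap L_m|$, which are exactly initial and final segments of the simplicial order; Theorem \ref{discrete_distance} then gives $d(A,B)\le d(V_n\cap L_m,W_n\cap L_m)\to d(V_n,W_n)=2a\sqrt{n}$ as $m\to\infty$ with $n$ fixed. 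No discrete CLT, no coupling of the mesh with $n$, and no cellular approximation are required; the only slack needed is the strict inequality built into the choice of $a$, which is then released at the end.
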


	We also establish a sort of a general lower bound, showing that in a way euclidean balls are optimal in regard to our problem.

	\begin{namedtheorem}[Theorem \ref{general_sup}.]
		When $K_n$ are unit-volume centrally symmetric bounded convex bodies
		\[
			-2\frac{1}{\sqrt{e}} \Phi^{-1}(\varepsilon) \leq \liminf_{n \to \infty} d_n(\varepsilon)
		\]
	\end{namedtheorem}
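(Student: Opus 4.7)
The approach is to exhibit, inside each $K_n$, two antipodal half-space caps of volume $\varepsilon$ separated by a slab of Euclidean width at least $-2\Phi^{-1}(\varepsilon)/\sqrt{e} + o_n(1)$.

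The first ingredient is a dimension-free lower bound on the covariance. Writing $\Sigma_n := \int_{K_n} xx^{\top}\,dx$, the inequality $(\det \Sigma_n)^{1/n} \ge 1/(2\pi e)$ follows by comparing the differential entropy of the uniform measure on $K_n$, which vanishes since $K_n$ has unit volume, with that of the Gaussian $\mathcal{N}(0, \Sigma_n)$, which is the maximum among distributions with given covariance. Setting $L_{K_n} := (\det\Sigma_n)^{1/(2n)}$, we get $L_{K_n} \ge 1/\sqrt{2\pi e}$, matching in the limit the value $L_{B_n} \to 1/\sqrt{2\pi e}$ attained by the unit-volume Euclidean ball, so that the ball really is, asymptotically, the extremizer of this scalar invariant.

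Next I would put $K_n$ in isotropic position via a volume-preserving linear map $T_n$, so that $K_n' := T_n K_n$ has covariance $L_{K_n}^2 I$. By Klartag's central-limit theorem for convex bodies, some unit vector $u_n' \in S^{n-1}$ produces a one-dimensional marginal of the uniform measure on $K_n'$ that is $o_n(1)$-close in total variation to $\mathcal{N}(0, L_{K_n}^2)$. Pick $t_n' > 0$ so that the cap $\{x \cdot u_n' \ge t_n'\} \cap K_n'$ has volume exactly $\varepsilon$; the Gaussian approximation and $L_{K_n}\ge 1/\sqrt{2\pi e}$ give $t_n' \ge -\Phi^{-1}(\varepsilon)/\sqrt{e} \cdot (1 + o_n(1))$. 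Pulling the two antipodal caps back through $T_n^{-1}$ produces $A_n, B_n \subset K_n$ of volume $\varepsilon$ lying on opposite sides of the hyperplane $\{x \cdot v_n = 0\}$ with $v_n = T_n^{\top} u_n'$, so that $d(A_n, B_n) \ge 2 t_n'/|v_n|$. A direct computation with $T_n = \det(\Sigma_n)^{1/(2n)} \Sigma_n^{-1/2}$ gives $|v_n|^2 = L_{K_n}^2 \cdot u_n'^{\top} \Sigma_n^{-1} u_n'$, so the desired inequality reduces to finding a Klartag-good direction $u_n'$ with $u_n'^{\top} \Sigma_n^{-1} u_n' \le 2\pi e \cdot (1 + o_n(1))$. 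The minimum of this quadratic form on $S^{n-1}$ equals $1/\lambda_1(\Sigma_n)$, and the chain $\lambda_1(\Sigma_n) \ge \operatorname{tr}(\Sigma_n)/n \ge L_{K_n}^2 \ge 1/(2\pi e)$ makes this minimum at most $2\pi e$.

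The main obstacle is that the minimizing direction -- the top eigenvector of $\Sigma_n$ -- need not itself lie inside Klartag's good-direction set. I would handle this by a case split. If $\lambda_1(\Sigma_n)$ stays bounded in $n$, then all eigenvalues of $\Sigma_n$ are of the same order (their geometric mean being at least $1/(2\pi e)$), $T_n$ has uniformly controlled condition number, and the rotational freedom in the choice of $T_n$, combined with the fact that Klartag's good-direction set has sphere measure tending to $1$, lets one align a good direction with the top eigenvector up to a $1+o_n(1)$ factor. If instead $\lambda_1(\Sigma_n) \to \infty$ along a subsequence, the one-dimensional marginal of the uniform measure on $K_n$ along the top eigenvector is a symmetric log-concave density with unbounded variance, and a standard tail bound for such densities (the max-density inequality $f(0)\lesssim 1/\sigma$, for instance) already produces caps of volume $\varepsilon$ whose separation grows without bound, dwarfing the target $-2\Phi^{-1}(\varepsilon)/\sqrt{e}$. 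Taking $\liminf_{n \to \infty}$ in either regime proves the theorem.
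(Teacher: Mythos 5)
Your overall scheme (produce a nearly Gaussian marginal whose variance is at least $\frac{1}{2\pi e}(1+o(1))$ and cut two antipodal caps of volume $\varepsilon$), the entropy bound $(\det\Sigma_n)^{1/n}\geq\frac{1}{2\pi e}$, and your Case 2 (unbounded top eigenvalue, handled by the log-concave maximal-density bound) are all fine. The gap is in Case 1. From $\lambda_{\max}(\Sigma_n)\leq M$ and $(\det\Sigma_n)^{1/n}\geq\frac{1}{2\pi e}$ it does \emph{not} follow that all eigenvalues are of the same order: the centered box with side lengths $a_1=e^{-n}$, $a_2=\ldots=a_n=e^{n/(n-1)}$ is a unit-volume centrally symmetric convex body with $\lambda_{\max}(\Sigma_n)$ bounded and geometric mean of eigenvalues equal to $\frac{1}{12}$, yet $\lambda_{\min}(\Sigma_n)=\frac{1}{12}e^{-2n}$. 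Hence the condition number of $T_n$ is not controlled, and the set of unit vectors $u'$ with $u'^{\top}\Sigma_n^{-1}u'\leq 2\pi e(1+o(1))$ -- which is exactly what your reduction requires of the Klartag-good direction -- can have spherical measure of order $\sqrt{n}\,e^{-n}$ (in the example it forces $|u'_1|$ to be of order $e^{-n}$). That is far smaller than the exceptional-set bound in any version of Klartag's theorem, so you cannot conclude that it meets the good-direction set. Moreover, the <<rotational freedom in the choice of $T_n$>> buys nothing: the isotropic position is unique up to orthogonal maps, and rotating $K_n'$ rotates the good-direction set together with the image of the top eigenvector, so their relative position is invariant. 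As written, bodies that are very thin in a few directions while having bounded top eigenvalue fall through your case split, so the argument is incomplete.

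For comparison, the paper's proof is an elementary averaging argument that sidesteps all of this: it integrates $\mu_n(\langle x,u\rangle\leq -d)$ over $u$ uniform on $S^{n-1}$, swaps the order of integration, uses the classical fact that $\sqrt{n}\,\langle x/\Vert x\Vert_2,u\rangle$ is asymptotically standard normal for each fixed $x\neq 0$, and discards the part of $K_n$ inside the ball of radius $\alpha\omega_n$, whose volume is at most $\alpha^n$ because $K_n$ has unit volume; central symmetry then supplies the antipodal cap for free. In particular it needs neither convexity (the paper's Section 5 statement drops it), nor isotropic position, nor any central limit theorem for convex bodies. If you want to rescue your route, you would need to replace the eigenvector-alignment step by an argument locating a Klartag-good direction along which the original-coordinate slab width is controlled -- for instance by averaging the relevant quantity over the good set rather than minimizing the quadratic form over the whole sphere -- and at that point you are essentially reproving the paper's averaging argument in a harder setting.
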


	Lower bounds on our problem could be derived simply by considering some hyperplane cuts. But how can we bound above the distance between two subsets $A$ and $B$ in a unit-volume convex body?

	Well, first, we observe that if both $A$ and $B$ are of volume at least $\frac{1}{2}$, then the distance between them is zero(see Lemma \ref{zero_dist}). That is why we assume that both $A$ and $B$ are of some volume $\varepsilon \in (0; \frac{1}{2})$. Next we introduce the concept of a $\delta$-enlargement of a body defined as the set of all points at a distance at most $\delta$ from our body, i.\,e.\,
	\[
		A_{\delta} = \{ x \in X \mid \exists y \in A\colon d(x, y) \leq \delta \}
	\]

	What happens if we replace $A$ with its $\delta$-enlargement for a small enough value of $\delta$? Roughly speaking, a layer of width $\delta$ will be added to our body. The volume of this layer $A_{\delta} \setminus A$ could be approximated as $\delta \cdot S(A)$, where $S(A)$ is the surface area of the body $A$. So the volume of $A$ increases by approximately $\delta \cdot S(A)$, but the distance between bodies $A$ and $B$ will decrease exactly by $\delta$ after we enlarge $A$(or might become zero). To estimate the distance between $A$ and $B$ we will be slowly enlarging them simultaneously until both bodies would be of volume $\frac{1}{2}$ at least, at which point the distance between them is already zero. The double of the amount of time it took both bodies to reach volume $\frac{1}{2}$ would be an upper bound on the distance between them.

	This was just a rough description of how we approach the problem. To make this idea work we are going to need more. We have not said anything about how our bodies may look like, at this point they could be arbitrary subsets of volume $\varepsilon$, which may present a problem, since we plan to rely on concepts such as surface area. In part, these issues might be mitigated by the following observation: after enlarging both $A$ and $B$ by a little $\delta$ distances and volumes would not change much, but smoothness properties might improve. Anyway, throughout this whole text we assume that the bodies we are dealing with are as smooth as needed.

	Now consider the process of a slow enlargement of a body $A$ at its very beginning. Instead of talking about the approximate volume of the layer $A_{\delta} \setminus A$ it would be better to take the right derivative at the point $\delta = 0$. What we will get is called the Minkowski--Steiner formula for the free surface area
	\[
		\mu^+(A) = \lim_{\delta \to 0^+} \frac{\mu(A_{\delta}) - \mu(A)}{\delta}
	\]
	Thus it is vital to our approach to be able to estimate this surface area. But we are only aware of the initial volumes of $A$ and $B$, which leads us to the isoperimetric problem: given the information about the initial volume of a body, find a lower bound on its surface area.

	Euclidean balls have really good symmetry properties. Symmetrization techniques could be applied. Isoperimetric regions inside the euclidean balls have been completely classified(\cite[Theorems 1 and 5]{ros}). This allows us to get tight enough estimates that lead to the proof of Theorem \ref{ball_exact}.

	Unit cubes, however, are not as good as euclidean balls in that regard. That is why instead of dealing with the interior of the unit cube we perform a transfer(Lemma \ref{transfer}) to a different space, where the situation with the isoperimetric problem is better, and by doing so derive the lower bounds on the initial space(\cite[Theorem 7]{ros}).

	At last, to derive lower bounds in the case of simplex new ideas and methods would need to be introduced. Here we repeat the approach from an article by Sasha Sodin \cite{Sodin2008}, where an isoperimetric inequality for $\ell_p$ balls with $p \in [1;2]$ was proven. In particular, in case $p = 1$ we get hyperoctahedrons. Theorem \ref{ell_p_upper} is an immediate consequence of the isoperimetric inequality established in article \cite{Sodin2008}.

	Even though our method does provide asymptotically correct estimates, we should not expect it to lead to exact constants. We bound the growth of $\mu(A_{\delta})$ below by considering the isoperimetric problem for volume $\mu(A_{\delta})$, but that might lead to suboptimal estimates, since as $\delta$ varies $A_{\delta}$ does not have to look like an optimal isoperimetric region.

	\begin{figure}[!h]
		\centering
		\begin{subfigure}[b]{0.28\textwidth}
			\centering
			\begin{tikzpicture}
				\draw (0, 0) -- (0, 2) -- (2, 2) -- (2, 0) -- cycle;
				\draw (0, 1) -- (1, 0);
			\end{tikzpicture}
			\caption{region $A$}
		\end{subfigure}
		\hfill
		\begin{subfigure}[b]{0.28\textwidth}
			\centering
			\begin{tikzpicture}
				\draw (0, 0) -- (0, 2) -- (2, 2) -- (2, 0) -- cycle;

				\draw (0.5, 1.5) -- (1.5, 0.5);
				\draw[domain=0:45] plot ({1 + sqrt(0.5) * cos(\x)}, {sqrt(0.5) * sin(\x)});

				\draw[domain=45:90] plot ({sqrt(0.5) * cos(\x)}, {1 + sqrt(0.5) * sin(\x)});
			\end{tikzpicture}
			\caption{region $A_{\delta}$}
		\end{subfigure}
		\hfill
		\begin{subfigure}[b]{0.42\textwidth}
			\centering
			\begin{tikzpicture}
				\draw (0, 0) -- (0, 2) -- (2, 2) -- (2, 0) -- cycle;
				\draw (0.8, 0) -- (0.8, 2);
			\end{tikzpicture}
			\caption{optimal region of volume $\mu(A_{\delta})$}
		\end{subfigure}
	\end{figure}
	\newpage

	\section{Preliminaries.}

	Assume that we are working in the space $X$ with metric $d$ and probability measure $\mu$, i.\,e.\,$\mu(X) = 1$. In this section we are going to introduce some basic concepts related to our problem.

	\begin{definition}
		The distance between a pair of non-empty subsets $A, B \subseteq X$ is the infimum of distances between points from $A$ and $B$
		\[
			\operatorname{dist}(A, B) = \inf_{x \in A, y \in B} d(x, y)
		\] 
	\end{definition}

	\begin{definition}
		A point $x \in X$ belongs to the $\delta$-enlargement of a subset $A \subseteq X$ if it is at distance at most $\delta$ from some point of $A$
		\[
			A_{\delta} = \{ x \in X \mid \exists y \in A\colon d(x, y) \leq \delta \}
		\]
	\end{definition}

	We want to know how far apart from each other two subsets $A, B \subset X$ of measure $\mu(A) = \mu(B) = \varepsilon > 0$ could be. To that end, note that, if their $\delta$-enlargements intersect, then the distance is bounded above by $2\delta$
	\[
		A_{\delta} \cap B_{\delta} \neq \varnothing \Rightarrow \operatorname{dist}(A, B) \leq 2\delta
	\]
	
	Our problem is concerned with the case of $X$ being an open convex bounded subset of $\mathbb{R}^n$ of unit volume, $d$ being the euclidean metric, and $\mu$ being the Lebesgue measure. In that case the following lemma holds.
	\begin{lemma}
		\label{zero_dist}
		If $A$ and $B$ are two subsets of $X$ with $\mu(A) + \mu(B) \geq 1$, then they are at a distance $0$ from each other
		\[
			\operatorname{dist}(A, B) = 0
		\]
	\end{lemma}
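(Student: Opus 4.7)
The plan is to argue by contradiction: I will assume $\operatorname{dist}(A, B) = \eta > 0$ and produce a gap set $W \subseteq X$ of positive measure that forces $\mu(A) + \mu(B) < 1$, contradicting the hypothesis.

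First I would fix some $\delta \in (0, \eta/2)$, say $\delta = \eta/3$, and split $X$ into three pieces
\[
U = \{x \in X : \operatorname{dist}(x, A) \leq \delta\}, \quad V = \{x \in X : \operatorname{dist}(x, B) \leq \delta\}, \quad W = X \setminus (U \cup V).
\]
Clearly $A \subseteq U$ and $B \subseteq V$, so $\mu(U) \geq \mu(A)$ and $\mu(V) \geq \mu(B)$. The sets $U$ and $V$ are disjoint, since any common point $x$ would yield $\operatorname{dist}(A, B) \leq \operatorname{dist}(x, A) + \operatorname{dist}(x, B) \leq 2\delta < \eta$ via the triangle inequality extended to sets. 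Consequently $\mu(U) + \mu(V) + \mu(W) = \mu(X) = 1$, and once I establish $\mu(W) > 0$ I obtain the contradiction $1 > \mu(A) + \mu(B) \geq 1$.

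The crux will be showing $\mu(W) > 0$, and this is where convexity of $X$ enters. I would pick $a_0 \in A$ and $b_0 \in B$ with $\ell := |a_0 - b_0|$ close enough to $\eta$ that $\ell < 4\eta/3$. By convexity the entire segment $[a_0, b_0]$ lies in $X$. For a point $p$ on this segment at distance $s$ from $a_0$ we have $\operatorname{dist}(p, A) \leq s$ and $\operatorname{dist}(p, B) \leq \ell - s$, while the same extended triangle inequality gives $\operatorname{dist}(p, A) + \operatorname{dist}(p, B) \geq \eta$. Combining these shows that for $s$ in an open sub-interval of length $4\eta/3 - \ell > 0$ both distances strictly exceed $\delta$, placing $p$ in $W$. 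Since $W$ is the intersection of the open set $X$ with the preimages of open half-lines under continuous distance functions, it is open in $\mathbb{R}^n$, so a whole neighbourhood of $p$ sits in $W$ and has positive Lebesgue measure.

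I expect the main obstacle to be precisely this last step, since it is the only place where convexity of $X$ is genuinely used: without convexity the lemma fails (an almost-disconnected $X$ whose mass splits into two far-apart lobes of total measure $1$ is a counterexample). Everything else reduces to the triangle inequality and continuity of distance functions.
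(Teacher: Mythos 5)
Your argument is correct and is essentially the paper's own proof: both assume $\operatorname{dist}(A,B)>0$, pick nearly-closest points $a\in A$, $b\in B$, use convexity of $X$ to place a point of the connecting segment well away from both sets, and conclude that an open neighbourhood of that point is a positive-measure region disjoint from $A\cup B$, contradicting $\mu(A)+\mu(B)\geq 1$. The only cosmetic difference is that the paper takes the midpoint of a segment of length $<2r$ and a small ball avoiding $A\cup B$ directly, whereas you phrase the same gap via the $\delta$-enlargements $U,V$ and the open set $W=X\setminus(U\cup V)$.
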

	\begin{proof}
		Assume the contrary. Let the distance between $A$ and $B$ be a positive number
		\[
			r = \operatorname{dist}(A, B) > 0
		\]
		This would mean that our subsets do not intersect, and thus
		\[
			\mu(A) + \mu(B) = \mu(A \cap B) + \mu(A \cup B) = \mu(A \cup B) = 1
		\]

		Pick a pair of points $a \in A$ and $b \in B$ at a distance less than $2r$
		\[
			d(a, b) < 2r
		\]
		By $c$ denote the midpoint of the segment between $a$ and $b$. The distance from $c$ to both subsets $A$ and $B$ is strictly less than $r$, so the point $c$ does not belong to any of our subsets. Now consider a $\delta$-neighborhood of $c$ that lies inside $X$ with $\delta < r - \frac{d(a, b)}{2}$. Clearly, it could not intersect neither $A$ nor $B$, and at the same time it has a non-zero measure, so
		\[
			\mu(X \setminus (A \cup B)) > 0,
		\]
		which leads to contradiction.
	\end{proof}

	To ensure the nonemptiness of the intersection of $A_{\delta}$ and $B_{\delta}$ the following condition would suffice
	\[
		\mu(A_{\delta}) \geq \frac{1}{2} \textrm{ and } \mu(B_{\delta}) \geq \frac{1}{2}
	\]
	Thus we are interested in the growth of $\mu(A_{\varepsilon})$ considered as a function of $\varepsilon$, since that might lead to an upper bound on $\delta$ and consequently on $\operatorname{dist}(A, B)$. The derivative of $\mu(A_{\varepsilon})$ at $\varepsilon = 0$ gives us

	\begin{definition}
		By the surface area of $A \subseteq X$ we mean the following limit
		\[
			\mu^+(A) = \lim_{\varepsilon \to 0^+} \frac{\mu(A_{\varepsilon}) - \mu(A)}{\varepsilon}
		\]
	\end{definition}

	Lower bounds on $\mu^+(A)$ might allow us to get results on the growth of $\mu(A_{\varepsilon})$, but all we know is the measure $\mu(A)$ of our subset $A$. So we want to know the least possible value of $\mu^+(A)$ when $\mu(A)$ is fixed, or at least bound $\mu^+(A)$ below.

	\begin{definition}
		By the isoperimetric profile we mean a function that maps $t$ to the infimum of possible values that $\mu^+(A)$ could take when $\mu(A) = t$.
		\[
			I_{\mu}(t) = \inf_{\mu(A) = t} \mu^+(A)
		\]
	\end{definition}
	\noindent We are no longer interested in the growth of $\mu(A_{\varepsilon})$ after we reach the measure of one half, also our initial $\mu(A)$ is greater than $0$. This means that we are only interested in the values of $I_{\mu}(t)$ when $0 < t < \frac{1}{2}$. That is why throughout this paper by default the domain of the isoperimetric profile is the interval $(0; \frac{1}{2})$.

	A region, which has the minimal surface area amongst all the regions of the same measure, is called an isoperimetric region, and its boundary is called an isoperimetric hypersurface.

	\newpage

	\section{Euclidean balls.}
	\label{euclidean_balls}

	\textbf{Introduction.} The euclidean ball is a perfect candidate for applying the symmetrization techniques. An argument({{\cite[Theorems 1 and 5]{ros}}}) involving them completely classifies the optimal isoperimetric regions of the euclidean ball. Lower bounds on the isoperimetric profile thus could be extracted by considering these optimal regions.

	\medskip

	By $B^n$ we denote the unit $n$-ball. Its volume is
	\[
		\frac{\sqrt{\pi}^n}{\Gamma\left(\frac{n}{2} + 1\right)}
	\]
	So the unit-volume $n$-ball will be of radius
	\[
		\omega_n = \frac{\Gamma\left(\frac{n}{2} + 1\right)^{\frac{1}{n}}}{\sqrt{\pi}} \sim \sqrt{\frac{n}{2\pi e}}
	\]
	By $\mu$ denote the Lebesgue measure on $\omega_n B^n$.

	Combination of theorems $1$ and $5$ from \cite{ros} provides a classification of optimal isoperimetric regions in $\omega_n B^n$.

	\begin{theorem}[{{\cite[Theorems 1 and 5]{ros}}}]
		\label{ball_iso_class}
		Isoperimetric hypersurfaces in a ball are either hyperplanes passing through the origin or spherical caps which are orthogonal to the surface of $\omega_n B^n$.
	\end{theorem}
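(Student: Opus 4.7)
The strategy is the classical one for free boundary isoperimetric problems: establish existence and regularity of a minimizer, symmetrize to reduce the dimension of the problem, and then classify the resulting rotationally symmetric constant mean curvature (CMC) hypersurfaces with the appropriate free boundary condition. Throughout let $\Omega \subset \omega_n B^n$ be an isoperimetric region of prescribed volume $t \in (0, \mu(\omega_n B^n))$, and write $\Sigma = \partial \Omega \cap \omega_n B^n$ for its free boundary.

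\textbf{Step 1: Existence and regularity.} I would first invoke the standard compactness argument in the space of sets of finite perimeter (direct method, lower semicontinuity of perimeter) to produce a minimizer $\Omega$. Interior regularity theory for minimizing currents/Caccioppoli sets then guarantees that $\Sigma$ is a smooth embedded hypersurface away from a closed singular set of Hausdorff dimension at most $n-8$. Taking the first variation with respect to volume-preserving compactly supported vector fields shows that $\Sigma$ has constant mean curvature in the interior. Taking the first variation with respect to vector fields that are tangent to $\partial(\omega_n B^n)$ along the boundary yields the free boundary condition: $\Sigma$ meets $\partial(\omega_n B^n)$ orthogonally along $\partial \Sigma$.

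\textbf{Step 2: Symmetrization to a surface of revolution.} Here I would apply Steiner symmetrization with respect to hyperplanes through the origin. Because $\omega_n B^n$ is invariant under every reflection through such a hyperplane, Steiner symmetrization preserves the ambient constraint, preserves the volume of $\Omega$, and does not increase its perimeter. Iterating this and passing to a limit (or invoking the spherical-cap/Schwarz-style symmetrization) forces the existence of a minimizer that is rotationally symmetric about some line through the origin. After this reduction the problem becomes one-dimensional: determine the profile curve of $\Sigma$ in a half-plane containing the axis, subject to constant mean curvature and orthogonality at the spherical boundary.

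\textbf{Step 3: Classifying rotationally symmetric CMC hypersurfaces with free boundary.} I would use the classification of Delaunay-type CMC hypersurfaces of revolution in $\mathbb{R}^n$ (planes, spheres, catenoids, unduloids, nodoids). The orthogonality condition at $\partial(\omega_n B^n)$, together with smoothness on the axis, rules out all of the periodic Delaunay surfaces, leaving only two possibilities: either mean curvature zero with the profile crossing the axis, forcing $\Sigma$ to be a flat hyperplane through the origin; or nonzero constant mean curvature, forcing $\Sigma$ to be a piece of a round sphere. In the latter case, elementary geometry shows that a round sphere meets $\partial(\omega_n B^n)$ orthogonally exactly when the two spheres are orthogonal in the usual sense, i.e.\ $\Sigma$ is a spherical cap orthogonal to $\partial(\omega_n B^n)$.

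\textbf{Main obstacle.} Steps 1 and 3 are essentially bookkeeping once the right regularity and CMC-classification machinery is cited. The delicate point is Step 2: Steiner symmetrization is easy to set up, but showing that repeated symmetrization actually produces a rotationally symmetric minimizer (rather than just symmetric about more and more hyperplanes) requires either an equality case analysis for the perimeter inequality or, more cleanly, the spherical-cap symmetrization argument used in Ros's paper. This is where I would expect to spend the most care, and it is also where one must verify that the symmetrized set still saturates the isoperimetric inequality so that the CMC / orthogonality conclusions of Step 1 apply to the symmetric representative.
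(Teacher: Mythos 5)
The paper does not prove this statement at all: Theorem \ref{ball_iso_class} is imported verbatim from Ros's survey (\cite[Theorems 1 and 5]{ros}) and used as a black box, so there is no internal proof to compare yours against. Judged on its own, your outline follows the same classical route that the cited literature takes (existence and regularity of minimizers, symmetrization to reduce to rotational symmetry, then classification of the symmetric candidates), so the overall architecture is the right one.

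That said, two steps as written have genuine gaps. In Step 2, iterating Steiner symmetrization with respect to hyperplanes through the origin does not by itself yield a rotationally symmetric minimizer: symmetry with respect to finitely (or even countably) many such hyperplanes is weaker than axial symmetry, and passing to a limit requires an equality-case analysis you have not supplied. The tool actually used in this setting is spherical symmetrization about an axis, which directly produces a rotationally invariant competitor of the same volume and no larger perimeter; and even then one only concludes that \emph{some} minimizer is rotationally symmetric, whereas the theorem classifies \emph{all} isoperimetric hypersurfaces, so an equality-case or stability argument is still needed to cover an arbitrary minimizer. In Step 3, the claim that orthogonality together with ``smoothness on the axis'' eliminates everything but hyperplanes and spherical caps presumes that the profile curve meets the axis; a priori the free boundary of a rotationally symmetric minimizer could be an annular piece of an unduloid, nodoid, cylinder or catenoid that never touches the axis yet meets the sphere orthogonally, and excluding these requires a real argument (typically second variation/stability, as in the Ros--Vergasta line of reasoning, or a careful analysis of the profile ODE), not just the boundary condition. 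One should also say how the possible singular set in dimensions $n \geq 8$ is handled when symmetrizing and analyzing the profile. Filling these points in is exactly the content of the cited theorems of Ros, so as it stands your proposal is a correct road map rather than a proof.
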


	We would like to find lower bounds on the isoperimetric profile $I_{\mu}$ of $\omega_n B^n$, by Theorem \ref{ball_iso_class} it would suffice to consider intersections with balls orthogonal to $\omega_n B^n$.

	By $\Psi(x)$ we denote $\Phi(\sqrt{e}x)$. Note that function $\Psi(x)$ has a finite Lipschitz constant $C > 0$, since its derivative is a bounded function. On the interval $(-\infty; 0)$ both $\Psi(x)$ and $\Psi^{\prime}(x)$ are increasing functions.

	\begin{theorem}
		\label{iso_in_ball}
		For every $\varepsilon_0 \in (0; \frac{1}{2})$ and $\tau > 0$ there is a number $N$ such that the isoperimetric inequality
		\[
			I_{\mu}(\Psi(t) + \tau) \geq \Psi^{\prime}(t)
		\]
		would hold for all $n > N$ and $\Psi(t) \in (\varepsilon_0; \frac{1}{2} - \tau)$.
	\end{theorem}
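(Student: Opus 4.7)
The plan is to combine the classification in Theorem \ref{ball_iso_class} with a Laplace-type asymptotic analysis, showing that $I_\mu(s)$ converges uniformly, on compact subsets of $(0, \frac{1}{2})$, to the Gaussian quantity $\Psi'(\Psi^{-1}(s))$. The assertion of the theorem then follows by absorbing the resulting $o(1)$ error in the $\tau$-buffer, using that $\Psi'$ is strictly increasing on $(-\infty, 0)$.

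First I would analyze the hyperplane slices of $\omega_n B^n$. The slice at $x_1 = h$ has $(n-1)$-volume $\sigma_n(h) = c_{n-1}(\omega_n^2 - h^2)^{(n-1)/2}$, where $c_{n-1}$ denotes the volume of the unit $(n-1)$-ball, and the mass of $\{x_1 \geq h\} \cap \omega_n B^n$ is $V_n(h) = \int_h^{\omega_n}\sigma_n(x)\,dx$. Using the asymptotic $\omega_n^2 \sim n/(2\pi e)$ together with Stirling's formula for $c_{n-1}$, a routine computation yields the uniform limits $\sigma_n(h) \to \Psi'(h)$ and $V_n(h) \to 1 - \Psi(h)$ on every compact subset of $\mathbb{R}$; hence $V_n^{-1}(s) \to -\Psi^{-1}(s)$ uniformly on compact subsets of $(0, 1)$.

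By Theorem \ref{ball_iso_class} the isoperimetric region of volume $s \in (0, \frac{1}{2})$ is bounded by a sphere $\partial B_R(p)$ with $|p|^2 = R^2 + \omega_n^2$; by rotation I may assume $p = (D, 0, \ldots, 0)$ with $D = \sqrt{R^2 + \omega_n^2}$. The two spheres intersect at the equator $\{x_1 = h_*\}$ where $h_* = \omega_n^2/D$, so the cap decomposes as the hyperplane cap $\{x_1 \geq h_*\} \cap \omega_n B^n$ plus a ``wedge'' $B_R(p) \cap \omega_n B^n \cap \{x_1 \leq h_*\}$, and its free boundary is the spherical cap on $\partial B_R(p)$ of half-angle $\alpha = \arccos(R/D)$. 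The main technical step, which I expect to be the principal obstacle, is to show that uniformly in $h_*$ over any compact subset of $(0, \infty)$ the wedge volume is $O(\sigma_n(h_*)/n)$ and the spherical-cap area equals $\sigma_n(h_*)(1 + o(1))$. For the wedge I would substitute $x_1 = D - R + u$, which reduces the integrand to $(2Ru - u^2)^{(n-1)/2}$; exponential concentration (in $n$) near the upper endpoint $u \approx h_*/2$ then delivers the bound. For the cap area I would apply Laplace's method to $\int_0^\alpha (\sin\beta)^{n-2}\,d\beta$, exploiting $R\sin\alpha = \sqrt{\omega_n^2 - h_*^2}$, $\cos\alpha = R/D \to 1$, and the identity $\mathrm{Area}(S^{n-2}) = (n-1)c_{n-1}$.

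Combining the preceding items, the surface area of the orthogonal cap of volume $s$ equals $\sigma_n(h_*)(1 + o(1))$ with $h_* = V_n^{-1}(s) + o(1)$, hence converges to $\Psi'(-\Psi^{-1}(s)) = \Psi'(\Psi^{-1}(s))$ by the symmetry $\Psi'(-x) = \Psi'(x)$, uniformly on compact subsets of $(0, \frac{1}{2})$. To conclude, set $s = \Psi(t) + \tau$ and $t' = \Psi^{-1}(s) > t$; both $t$ and $t'$ lie in a compact subinterval of $(-\infty, 0)$ determined by $\varepsilon_0, \tau$, on which $\Psi'$ is continuous and strictly increasing, so $\eta := \inf_t (\Psi'(t') - \Psi'(t))$ is a strictly positive constant depending only on $\varepsilon_0$ and $\tau$. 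Choosing $N$ large enough that $|I_\mu(s) - \Psi'(t')| < \eta$ uniformly then gives $I_\mu(\Psi(t) + \tau) > \Psi'(t') - \eta \geq \Psi'(t)$, as required.
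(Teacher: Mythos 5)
Your geometric core is the same as the paper's: you invoke the classification of Theorem \ref{ball_iso_class}, cut the orthogonal lens by the hyperplane through the equator $x_1=h_*$ into the hyperplane cap of $\omega_n B^n$ plus a small dome of $B_R(p)$, control the dome by the exponential decay of its cross-sections, compare the free spherical boundary with its flat base of area $\sigma_n(h_*)$, and feed in the uniform limits $\sigma_n\to\Psi^{\prime}$, $V_n\to\Psi(-\cdot)$. The packaging differs (you aim at a two-sided limit $I_\mu(s)\to\Psi^{\prime}(\Psi^{-1}(s))$, while the paper only proves one-sided bounds with explicit slacks $\delta_1,\delta_2$ parametrized by the distance $d$ and converts them via monotonicity of $I_\mu$), but that is a matter of presentation; the Laplace upper bound on the cap area is extra work you do not actually need, since the projection inequality \emph{cap area $\geq$ base area} already suffices for a lower bound on $I_\mu$.

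There is, however, a genuine gap at the endpoint $s\to\frac{1}{2}$. In the theorem $s=\Psi(t)+\tau$ ranges over $(\varepsilon_0+\tau;\frac{1}{2})$, which is \emph{not} compactly contained in $(0;\frac{1}{2})$: as $\Psi(t)\to\frac{1}{2}-\tau$ you have $t^{\prime}=\Psi^{-1}(s)\to 0^{-}$ (so your claim that $t^{\prime}$ stays in a compact subinterval of $(-\infty;0)$ is false, although $\eta=\inf_t(\Psi^{\prime}(t^{\prime})-\Psi^{\prime}(t))>0$ survives because the infimum is over $t$ in a compact set), and correspondingly $h_*\to 0^{+}$, leaving the compact subset of $(0;\infty)$ on which you established your wedge and cap estimates. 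Consequently the final step ``choose $N$ so that $|I_\mu(s)-\Psi^{\prime}(t^{\prime})|<\eta$ uniformly'' is not covered by the uniform convergence you actually claimed. The repair is cheap and is exactly the paper's first observation: $I_\mu$ is non-decreasing on $(0;\frac{1}{2})$ (slide the orthogonal ball farther from the center to decrease both volume and area), so for $s\geq \frac{1}{2}-\frac{\tau}{2}$ one bounds $I_\mu(s)\geq I_\mu\left(\frac{1}{2}-\frac{\tau}{2}\right)$, which for large $n$ exceeds $\Psi^{\prime}\left(\Psi^{-1}\left(\frac{1}{2}-\tau\right)\right)\geq\Psi^{\prime}(t)$; alternatively, check that your wedge and cap-versus-base errors only improve as $h_*\to 0^{+}$ and extend the uniformity up to $s<\frac{1}{2}$. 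Without one of these additions the argument as written does not prove the stated inequality for $\Psi(t)$ near $\frac{1}{2}-\tau$.
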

	\begin{proof}
		First, note that $I_{\mu}$ is a non-decreasing function on the interval $(0; \frac{1}{2})$. Indeed, if one would take a ball orthogonal to $\omega_n B^n$ whose intersection with $\omega_n B^n$ is of volume $\varepsilon \in (0; \frac{1}{2})$ and replace it with a ball that has the same radius but whose center is further away from the center of $\omega_n B^n$, one would get a region of $\omega_n B^n$ of smaller volume and smaller surface area.

		Here we are going to prove that for any $D, \delta_1, \delta_2 > 0$ there is a number $N$ such that for every $n > N$ and $0 < d \leq D$ there is an optimal isoperimetric region in $\omega_n B^n$ whose volume $V$ and surface area $S$ satisfy
		\[
			\Psi(-d) + \delta_1 \geq V
		\]
		\[
			S \geq \Psi^{\prime}(-d - \delta_2)
		\]

		Note that our theorem follows from this last claim. Indeed, in the statement of the theorem we require $\Psi(t)$ to be in range $(\varepsilon_0; \frac{1}{2} - \tau)$, which means that $t \geq \Psi^{-1}(\varepsilon_0)$. Set $D = -\Psi^{-1}(\varepsilon_0)$ and pick $\delta_1, \delta_2 > 0$ so that
		\[
			C \delta_2 + \delta_1 \leq \tau
		\]
		\[
			t < \Psi^{-1}\left(\frac{1}{2} - \tau\right) \leq -\delta_2
		\]
		By our claim there will be a number $N$ such that for every $n > N$ and $d \in (0; D]$ there would be an optimal isoperimetric region in $\omega_n B^n$ of volume $V$ not greater than $\Psi(-d) + \delta_1$ and surface area $S$ at least $\Psi^{\prime}(-d - \delta_2)$. Since this region is optimal,
		\[
			I_{\mu}(V) = S
		\]
		And our bounds imply
		\begin{equation}
			\label{VS}
			I_{\mu}(\Psi(-d) + \delta_1) \geq I_{\mu}(V) = S \geq \Psi^{\prime}(-d - \delta_2)
		\end{equation}
		For an arbitrary $t$ satisfying $\Psi(t) \in (\varepsilon_0; \frac{1}{2} - \tau)$ we can set $d = -t - \delta_2 \in (0; D]$, but then
		\[
			\Psi(-d) + \delta_1 = \Psi(t + \delta_2) + \delta_1 \leq \Psi(t) + C\delta_2 + \delta_1 \leq \Psi(t) + \tau
		\]
		We combine this with (\ref{VS}) and get the desired isoperimetric inequality
		\[
			I_{\mu}(\Psi(t) + \tau) \geq I_{\mu}(\Psi(-d) + \delta_1) \geq \Psi^{\prime}(-d - \delta_2) = \Psi^{\prime}(t)
		\]

		Now we need to prove our claim. Fix $D > 0$. We would be considering the intersection of $\omega_n B^n$ and a ball $B^n_r(A)$ orthogonal to it such that the distance from the origin, which we will denote here as $O$, to $B^n_r(A)$ is some number $\frac{d}{2}$ in range $(0; \frac{D}{2}]$.

		Two balls are orthogonal if their centers together with an arbitrary point on the intersection of the corresponding spheres form a right triangle.

		\begin{figure}[H]
			\begin{tikzpicture}
				\draw (0, 0) -- (5, 0) -- (1.8, 2.4) -- cycle;
				\draw (0, 0) -- (5, 0) -- (1.8, -2.4) -- cycle;
				\draw (0, 0) circle (3);
				\draw (5, 0) circle (4);

				\filldraw (0, 0) circle (0.05);
				\node[below] (O) at (0, 0) {$O$};

				\filldraw (5, 0) circle (0.05);
				\node[below] (A) at (5, 0) {$A$};

				\filldraw (1.8, 2.4) circle (0.05);
				\node[above] (X) at (1.8, 2.5) {$X$};

				\filldraw (1.8, -2.4) circle (0.05);
				\node[below] (X) at (1.8, -2.5) {$Y$};

				\filldraw (1, 0) circle (0.05);
				\node[below left] (N) at (1, 0) {$N$};

				\filldraw (1.8, 0) circle (0.05);
				\node[below right] (H) at (1.8, 0) {$H$};

				\filldraw (1.5, 0) circle (0.05);
				\node[below] (M) at (1.5, 0) {$M$};

				\draw (1.8, 2.4) -- (1.8, 0);
				\draw (1.8, -2.4) -- (1.8, 0);
			\end{tikzpicture}
			\centering
			\caption{The right triangle described above}
		\end{figure}
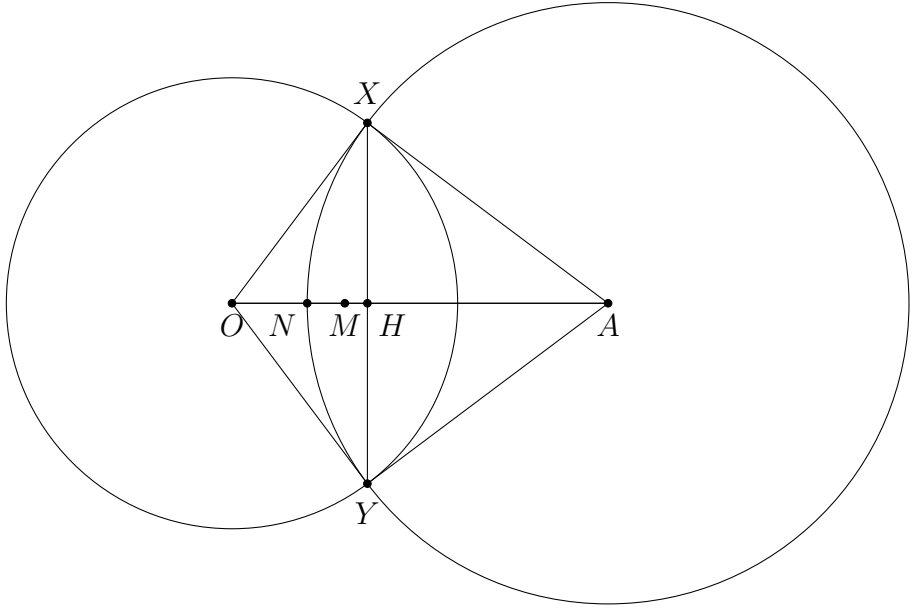
		
		In the figure above ball $\omega_n B^n$ corresponds to the circle with center at $O$ and radius $OX$, ball $B^n_r(A)$ -- to the circle with center at $A$ and radius $AX$. Since we have a right triangle,
		\[
			OA = \sqrt{OX^2 + AX^2} = \sqrt{\omega_n^2 + r^2}
		\]
		We require that $ON = \frac{d}{2}$
		\[
			ON = OA - AN = \sqrt{\omega_n^2 + r^2} - r = \frac{d}{2}
		\]
		\[
			\sqrt{\omega_n^2 + r^2} = r + \frac{d}{2}
		\]
		\[
			\omega_n^2 = rd + \frac{d^2}{4}
		\]
		\[
			r = \frac{\omega_n^2}{d} - \frac{d}{4}
		\]
		Consider the altitude $XH$ of the right triangle $OAX$ and note that
		\begin{multline}
			\label{twice}
			OH = \frac{OX^2}{OA} = \frac{\omega_n^2}{\sqrt{\omega_n^2 + r^2}} = \frac{1}{\sqrt{\frac{1}{\omega_n^2} + \left(\frac{r}{\omega_n^2}\right)^2}} = \frac{1}{\sqrt{\frac{1}{\omega_n^2} + \left( \frac{1}{d} - \frac{d}{4\omega_n^2} \right)^2}} \\ = \frac{1}{\sqrt{\frac{1}{d^2} + \frac{1}{2\omega_n^2} + \frac{d^2}{16\omega_n^4}}} = \frac{1}{\sqrt{\left( \frac{1}{d} + \frac{d}{4\omega_n^2} \right)^2}} = \frac{1}{\frac{1}{d} + \frac{d}{4\omega_n^2}} = \frac{d}{1 + \frac{d^2}{4\omega_n^2}} \leq d
		\end{multline}

		Let $P$ be a hyperplane at a distance $x$ from the origin. By $S_n(x)$ denote the volume of the hyperplane section of $\omega_n B^n$ by $P$. Hyperplane $P$ cuts $\omega_n B^n$ into two parts, at least one which is of volume not greater than $\frac{1}{2}$, denote that volume by $V_n(x)$. Both $V_n(x)$ and $S_n(x)$ are decreasing functions defined on $[0; \infty)$.

		It follows from the proof of theorem $1$ from \cite{ball_section} that the sequence of functions $V_n(x)$ uniformly converges to $\Psi(-x)$ and that the sequence of functions $S_n(x)$ uniformly converges to $\Psi^{\prime}(-x)$(see Appendix \ref{v_n_s_n}).

		On the interval $[0; D]$ positive continuous function $\Psi^{\prime}(-x) - \Psi^{\prime}(-x - \delta_2)$ reaches its minimum value $\varepsilon_1 > 0$. By uniform convergence for all sufficiently large $n$ we shall have
		\[
			S_n(x) \geq \Psi^{\prime}(-x) - \varepsilon_1,
		\]
		which gives us
		\[
			S_n(OH) \geq S_n(d) \geq \Psi^{\prime}(-d) - \varepsilon_1 \geq \Psi^{\prime}(-d - \delta_2)
		\]
		for all $d \in (0; D]$.

		We also know that $S_n(OH) \leq S_n(0)$ and $S_n(0)$ converge towards $\Psi^{\prime}(0)$. Thus $S_n(OH)$ is always bounded above by some constant $S_0$.

		Hyperplane passing through the point $H$ orthogonal to $OA$ divides the intersection of two balls $\omega_n B^n$ and $B^n_r(A)$ into two spherical domes: $\Omega_1$ belonging to $\omega_n B^n$ and $\Omega_2$ belonging to $B^n_r(A)$. The volume of $\omega_n B^n \cap B^n_r(A)$ is equal to the sum of volumes of $\Omega_1$ and $\Omega_2$.

		We would like to bound above the volume of $\Omega_2$. Its base, the hyperplane section passing through $H$, has area not greater than $S_0$. Fix some number $1 > \varepsilon_2 > 0$. On the segment $HN$ pick a point $M$ such that $HM \colon HN = \varepsilon_2$. By $S_1$ denote the area of the hyperplane section of $B^n_r(A)$ passing through $M$ orthogonal to $OA$. Volume of $\Omega_2$ could be bounded above as
		\begin{equation}
			\label{omega_bound}
			S_1 NM + S_0 MH = S_1 NH (1 - \varepsilon_2) + S_0 NH \varepsilon_2
		\end{equation}

		Radius of the $(n-1)$-ball corresponding to $S_1$ equals to
		\begin{multline*}
			\sqrt{AX^2 - AM^2} = \sqrt{AX^2 - (AH + HM)^2} \\ = \sqrt{(AX^2- AH^2) - 2AH \cdot HM - HM^2} \leq \sqrt{XH^2 - 2\varepsilon_2 AH \cdot HN}
		\end{multline*}
		Radius of the $(n-1)$-ball corresponding to $S_0$ is $XH$, so the ratio of the two radii is
		\[
			\sqrt{1 - 2\varepsilon_2 \frac{AH}{XH^2} HN}
		\]
		Since $XH$ is the altitude in the right triangle, $XH^2$ is equal to $OH \cdot HA$, and the ratio could be rewritten as
		\[
			\sqrt{1 - 2\varepsilon_2 \frac{HN}{OH}}
		\]
		Note that $HN = OH - ON$ and that by formula (\ref{twice})
		\begin{multline*}
			\sqrt{1 - 2\varepsilon_2 \frac{HN}{OH}} = \sqrt{1 - 2\varepsilon_2\left(1 - \frac{ON}{\frac{2ON}{1 + \frac{d^2}{4\omega_n^2}}}\right)} = \sqrt{1 - 2\varepsilon_2\left(1 - \frac{1 + \frac{d^2}{4\omega_n^2}}{2}\right)}
		\end{multline*}
		For all sufficiently large $n$
		\[
			\frac{d^2}{4\omega_n^2} \leq \frac{1}{2} \Rightarrow \sqrt{1 - 2\varepsilon_2\left(1 - \frac{1 + \frac{d^2}{4\omega_n^2}}{2}\right)} \leq \sqrt{1 - \frac{\varepsilon_2}{2}}
		\]
		We conclude
		\[
			S_1 \leq S_0 \left( 1 - \frac{\varepsilon_2}{2} \right)^{\frac{n - 1}{2}}
		\]

		Clearly, $NH \leq OH \leq d \leq D$, and by (\ref{omega_bound}) the volume of $\Omega_2$ is not greater than
		\[
			D S_0\left( \left( 1 - \frac{\varepsilon_2}{2} \right)^{\frac{n - 1}{2}}(1 - \varepsilon_2) + \varepsilon_2 \right)
		\]
		Note that we could pick $\varepsilon_2 \in (0; 1)$ so that for all sufficiently large $n$
		\begin{equation}
			\label{ineq_1}
			D S_0\left( \left( 1 - \frac{\varepsilon_2}{2} \right)^{\frac{n - 1}{2}}(1 - \varepsilon_2) + \varepsilon_2 \right) \leq \frac{\delta_1}{2}
		\end{equation}

		The volume of $\Omega_1$ is $V_n(OH)$. And by (\ref{twice})
		\[
			d - OH = d - \frac{d}{1 + \frac{d^2}{4\omega_n^2}} = d \cdot \frac{\frac{d^2}{4\omega_n^2}}{1 + \frac{d^2}{4\omega_n^2}} \leq \frac{D^3}{4\omega_n^2}
		\]
		We noted that the sequence of functions $V_n(x)$ uniformly converges to $\Psi(-x)$. Thus for all sufficiently large $n$
		\[
			\Psi(-OH) + \frac{\delta_1}{4} \geq V_n(OH)
		\]
		Note that for all $x \geq 0$
		\[
			0 < \Psi^{\prime}(-x) \leq \Psi^{\prime}(0),
		\]
		which means
		\begin{equation}
			\label{ineq_2}
			\Psi(-d) + \Psi^{\prime}(0) \frac{D^3}{4\omega_n^2} + \frac{\delta_1}{4} \geq \Psi(-OH) + \frac{\delta_1}{4} \geq V_n(OH)
		\end{equation}
		And for large enough $n$ we have
		\begin{equation}
			\label{ineq_3}
			\Psi^{\prime}(0) \frac{D^3}{4\omega_n^2} \leq \frac{\delta_1}{4}{}
		\end{equation}

		We conclude that the volume $V$ of $\omega_n B^n \cap B^n_r(A)$ is equal to the sum of volumes of $\Omega_1$ and $\Omega_2$, and thus by inequalities (\ref{ineq_1}), (\ref{ineq_2}), (\ref{ineq_3})
		\[
			\Psi(-d) + \delta_1 \geq V
		\]

		By Theorem \ref{ball_iso_class} the intersection between $\omega_n B^n$ and $B^n_r$ is an optimal isoperimetric region in $\omega_n B^n$. Its surface area $S$ is equal to the surface area of the spherical cap corresponding to $\Omega_2$, which can be bounded below by the area of the base of $\Omega_2$, i.\,e.\,$S_n(OH)$. We have thus shown that the volume $V$ and the free surface area $S$ of $\omega_n B^n \cap B^n_r(A)$ satisfy
		\[
			\Psi(-d) + \delta_1 \geq V
		\]
		\[
			S \geq S_n(OH) \geq \Psi^{\prime}(-d - \delta_2)
		\]
		for all sufficiently large $n$ and that the number $d$ here can be chosen here as an arbitrary number from $(0; D]$, which proves our claim.
	\end{proof}

	By $d_n(\varepsilon)$ denote the supremum of all possible distances between two subsets of volume $\varepsilon \in (0; \frac{1}{2})$ inside $\omega_n B^n$.

	Using our isoperimetric inequality we derive
	\begin{theorem}
		\label{euc_ball_dist}
		For every $\varepsilon \in (0; \frac{1}{2})$
		\[
			\limsup_{n \to \infty} d_n(\varepsilon) \leq -2\frac{1}{\sqrt{e}} \Phi^{-1}(\varepsilon),
		\]
		where the function $-2\frac{1}{\sqrt{e}} \Phi^{-1}(\varepsilon)$ is asymptotically equivalent to
		\[
			-2\frac{1}{\sqrt{\pi e}} \sqrt{-\ln \varepsilon}
		\]
		as $\varepsilon \to 0$.
	\end{theorem}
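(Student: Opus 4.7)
My plan is to realize the enlargement strategy outlined in the introduction, feeding Theorem \ref{iso_in_ball} in as the crucial isoperimetric input. Fix $A, B \subset \omega_n B^n$ of measure $\varepsilon$ and define $f_A(\delta) = \mu(A_\delta)$, $f_B(\delta) = \mu(B_\delta)$. Both are non-decreasing, hence differentiable almost everywhere, and at points of differentiability their derivatives dominate $\mu^+(A_\delta) \geq I_\mu(f_A(\delta))$ (and similarly for $B$), invoking the blanket smoothness assumption of the paper to make sense of $\mu^+(A_\delta)$. Thus both functions will satisfy the differential inequality $f' \geq I_\mu \circ f$ a.e.

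Next I would fix $\tau > 0$ small and $\varepsilon_0 \in (0, \varepsilon)$. For all sufficiently large $n$ Theorem \ref{iso_in_ball} reads
\[
    I_\mu(v) \geq \Psi'(\Psi^{-1}(v - \tau)) \quad \text{for } v \in (\varepsilon_0 + \tau, \tfrac{1}{2}).
\]
I would compare $f_A$ with the model function $g(\delta) = \Psi(\delta + \Psi^{-1}(\varepsilon - \tau)) + \tau$, which solves $g'(\delta) = \Psi'(\Psi^{-1}(g(\delta) - \tau))$ with $g(0) = \varepsilon$. A Grönwall-type argument should then yield $f_A(\delta) \geq g(\delta)$ so long as $g(\delta) \leq \frac{1}{2}$, and likewise for $f_B$. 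The level $\frac{1}{2}$ is reached at $\delta^* = \Psi^{-1}(\frac{1}{2} - \tau) - \Psi^{-1}(\varepsilon - \tau)$, so $\mu(A_{\delta^*}), \mu(B_{\delta^*}) \geq \frac{1}{2}$.

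At this point Lemma \ref{zero_dist} applied to $A_{\delta^*}$ and $B_{\delta^*}$ gives $\operatorname{dist}(A_{\delta^*}, B_{\delta^*}) = 0$, and a short triangle inequality then yields $\operatorname{dist}(A, B) \leq 2\delta^*$. Taking the supremum over $A, B$ and then $\limsup_{n \to \infty}$ gives $\limsup_n d_n(\varepsilon) \leq 2\delta^*$; sending $\tau \to 0^+$ and using $\Psi^{-1}(\frac{1}{2}) = 0$ together with $\Psi^{-1}(\varepsilon) = \Phi^{-1}(\varepsilon)/\sqrt{e}$ delivers the stated bound $\limsup_n d_n(\varepsilon) \leq -2\Phi^{-1}(\varepsilon)/\sqrt{e}$. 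The asymptotic equivalence is then immediate from the behavior of $\Phi^{-1}$ recalled in the introduction. The main obstacle I expect is the rigorous passage from the pointwise isoperimetric inequality to the ODE comparison for $\mu(A_\delta)$, together with clean book-keeping of the shift $\tau$ throughout; the regularity question for $\mu^+$ of the enlarged set is absorbed, as the paper allows, by implicit smoothing.
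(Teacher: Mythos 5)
Your proposal is correct and follows essentially the same route as the paper: the comparison function $g(\delta)=\Psi(\delta+\Psi^{-1}(\varepsilon-\tau))+\tau$ is exactly the paper's $y(\delta)$, the Gr\"onwall-type comparison is carried out there as a direct contradiction argument, and the conclusion via $\delta^{*}=\Psi^{-1}(\frac{1}{2}-\tau)-\Psi^{-1}(\varepsilon-\tau)$ followed by $\tau\to 0^{+}$ matches the paper's treatment of $\delta_M$. No substantive differences to report.
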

	\begin{proof}
		Pick any $\varepsilon_0 \in (0; \varepsilon)$ and $\tau \in (0; \varepsilon)$. By Theorem \ref{iso_in_ball} there is a number $N$ such that inequality
		\begin{equation}
			\label{here_iso_ball}
			I_{\mu}(\Psi(t) + \tau) \geq \Psi^{\prime}(t)
		\end{equation}
		holds for all $n > N$ and $t$ such that $\Psi(t) \in (\varepsilon_0; \frac{1}{2} - \tau)$.

		Assume that $n > N$. We consider two bodies $A$ and $B$ of volume $\varepsilon$ inside the unit-volume euclidean ball $\omega_n B^n$.

		We are interested in the least values $\delta_A, \delta_B$ such that the $\delta_A$-enlargement of body $A$ in $\omega_n B^n$ will be of volume $\frac{1}{2}$ and $\delta_B$-enlargement of body $B$ will be of volume $\frac{1}{2}$ too. For these enlargements we shall have
		\[
			\operatorname{dist}(A_{\delta_A}, B_{\delta_B}) = 0,
		\]
		from which
		\[
			\operatorname{dist}(A, B) \leq \delta_A + \delta_B
		\]
		follows.

		Isoperimetric inequality (\ref{here_iso_ball}) provides an estimate on the growth of $\delta$-enlargements of our bodies:
		\begin{equation}
			\label{y_delta}
			\partial_+ \mu(A_{\delta}) \geq \Psi^{\prime}(\Psi^{-1}(\mu(A_{\delta}) - \tau))
		\end{equation}
		when $\mu(A_{\delta}) \leq \frac{1}{2}$.

		By $\delta_0$ denote $\Psi^{-1}(\varepsilon - \tau)$. Now consider the function
		\[
			y(\delta) = \Psi(\delta_0 + \delta) + \tau
		\]
		By $\delta_M$ denote the moment when $y$ reaches $\frac{1}{2}$, i.\,e. $y(\delta_M) = \frac{1}{2}$. Assume that $\delta_M < \delta_A$. Functions $\mu(A_{\delta})$ and $y(\delta)$ coincide at $\delta = 0$. Furthermore, because of inequality (\ref{y_delta}), we should have
		\begin{multline*}
			\mu(A_{\delta}) \geq y(\delta) \textrm{ and } \partial_+ \mu(A_{\delta}) \geq \Psi^{\prime}(\Psi^{-1}(\mu(A_{\delta}) - \tau)) \\ \geq \Psi^{\prime}(\Psi^{-1}(y(\delta) - \tau)) = \Psi^{\prime}(\Psi^{-1}(\Psi(\delta_0 + \delta))) = \partial_+ y(\delta)
		\end{multline*}
		for all $\delta \in [0; \delta_M]$. But then we have a contradiction
		\[
			\frac{1}{2} = \mu(A_{\delta_A}) > \mu(A_{\delta_M}) \geq y(\delta_M) = \frac{1}{2}
		\]

		Thus $\delta_A$ and similarly $\delta_B$ are bounded above by $\delta_M$, which implies
		\[
			\operatorname{dist}(A, B) \leq \delta_A + \delta_B \leq 2\delta_M
		\]
		The value $\delta_M$ satisfies
		\[
			\frac{1}{2} = y(\delta_M) = \Psi(\delta_0 + \delta_M) + \tau
		\]
		\[
			\frac{1}{2} - \tau = \Psi(\delta_0 + \delta_M)
		\]
		\[
			\delta_M = \Psi^{-1}\left(\frac{1}{2} - \tau\right) - \Psi^{-1}\left(\varepsilon - \tau\right)
		\]
		As $\tau$ tends to $0$
		\[
			\Psi^{-1}\left(\frac{1}{2} - \tau\right) - \Psi^{-1}\left(\varepsilon - \tau\right) \to -\Psi^{-1}(\varepsilon) = -\frac{1}{\sqrt{e}} \Phi^{-1}(\varepsilon)
		\]
		And since we can choose $\tau$ to be an arbitrary number in $(0; \varepsilon)$
		\[
			\limsup_{n \to \infty} d_n(\varepsilon) \leq -2\frac{1}{\sqrt{e}} \Phi^{-1}(\varepsilon)
		\]
	\end{proof}

	\medskip

	\textbf{Conclusions.} Symmetrization techniques lead to the solution of the isoperimetric problem in other different cases: the classical isoperimetric problem in $\mathbb{R}^n$; isoperimetric inequality on the sphere(\cite[Appendix]{FLM}, \cite[Theorem 2.2.1]{gine_nickl}), from which the gaussian isoperimetric inequality could be derived(\cite[Theorem 2.2.3]{gine_nickl}, \cite[Theorem 20]{ros}).

	\newpage

	\section{Unit cubes.}

	\textbf{Introduction.} Unlike the euclidean ball the cube does not have <<many>> symmetries. To derive lower bounds on the isoperimetric profile we are going to perform a <<transfer>> to a different space. Descriptions of this idea could be found in \cite[Theorem 7]{ros}, \cite[Proposition 2.8]{L01}. In this section we will follow the approach presented in \cite{ros}.

	\medskip

	Consider an $n$-dimensional unit cube $(0; 1)^n$. We can think of it as of a space with Lebesgue measure $\mu$ and Euclidean metric. Now we would like to be able to show estimates on the isoperimetric profile of $\mu$. However, it is quite unclear how to deal with the corresponding space. For example, the cube only has a finite number of symmetries, so symmetrization methods would not get us far. That is why it makes sense to consider a way to transfer to a different, <<better>> space -- an idea that plays a key role in our approach.

	\begin{proposition}[{{\cite[Proposition 1]{ros}}}]
		\label{transfer}
		Assume that for a pair of spaces $M$ and $M^{\prime}$ with measures $\upsilon$ and $\upsilon^{\prime}$, respectively, we have a map $\phi: M \to M^{\prime}$ which transforms measure $\upsilon$ into $\upsilon^{\prime}$, i.\,e. $\mu^{\prime}(A) = \mu(\phi^{-1}(A))$, and that is also $c$-Lipschitz for some $c > 0$, i.\,e. a pair of points in $M$ at a distance $d$ has images at distance at most $c \cdot d$. The following inequality holds
		\[
			I_{\upsilon} \leq c \cdot I_{\upsilon^{\prime}}
		\]
	\end{proposition}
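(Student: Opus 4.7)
The natural strategy is to take an almost-optimal isoperimetric region $A' \subseteq M'$ of measure $t$ (so $\upsilon'^+(A')$ is close to $I_{\upsilon'}(t)$) and pull it back through $\phi$ to produce a competitor $A = \phi^{-1}(A')$ of measure $t$ in $M$. The measure-transfer hypothesis $\upsilon'(A') = \upsilon(\phi^{-1}(A'))$ immediately gives $\upsilon(A) = t$, so $A$ is a valid competitor for $I_\upsilon(t)$, and it suffices to show that $\upsilon^+(A) \leq c \cdot \upsilon'^+(A')$.

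The main step, and the only point where the Lipschitz hypothesis enters, is the inclusion
\[
(\phi^{-1}(A'))_{\delta} \subseteq \phi^{-1}(A'_{c\delta}).
\]
This is a direct unpacking of definitions: if $x$ lies within distance $\delta$ of some $y \in \phi^{-1}(A')$, then by the $c$-Lipschitz property $\phi(x)$ lies within distance $c\delta$ of $\phi(y) \in A'$, so $\phi(x) \in A'_{c\delta}$. Applying $\upsilon$ to both sides and using the measure-transfer property once more yields $\upsilon(A_\delta) \leq \upsilon'(A'_{c\delta})$.

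From there the passage to surface areas is routine: since $\upsilon(A) = \upsilon'(A')$ we get
\[
\frac{\upsilon(A_\delta) - \upsilon(A)}{\delta} \leq c \cdot \frac{\upsilon'(A'_{c\delta}) - \upsilon'(A')}{c\delta},
\]
and taking $\delta \to 0^+$ produces $\upsilon^+(A) \leq c \cdot \upsilon'^+(A')$. Taking an infimum over almost-optimal $A'$ of measure $t$ gives $I_\upsilon(t) \leq c \cdot I_{\upsilon'}(t)$, as desired.

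I do not anticipate a serious obstacle here; the only subtlety is being careful that the one-sided derivative in the definition of $\mu^+$ behaves well under the inclusion, which is handled by writing the difference quotient with the matching $c\delta$ factor before passing to the limit. If one wanted to be fully rigorous about the infimum, one would pick a sequence $A'_k$ with $\upsilon'^+(A'_k) \to I_{\upsilon'}(t)$ and observe that the pullbacks $A_k = \phi^{-1}(A'_k)$ all have measure $t$ and surface area at most $c \cdot \upsilon'^+(A'_k)$, so their infimum in $M$ is at most $c \cdot I_{\upsilon'}(t)$.
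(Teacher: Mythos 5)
Your proposal is correct and follows essentially the same route as the paper's proof: pull back a (near-)optimal region through $\phi$, use the Lipschitz property to get the inclusion $\phi(R_{\delta}) \subseteq R^{\prime}_{c\delta}$ (equivalently your $(\phi^{-1}(A^{\prime}))_{\delta} \subseteq \phi^{-1}(A^{\prime}_{c\delta})$), compare difference quotients scaled by $c\delta$, and pass to the limit before taking the infimum over regions of measure $t$. No meaningful difference from the paper's argument.
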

	\begin{proof}
		Consider a closed $R^{\prime} \subseteq M^{\prime}$ and its preimage $R = \phi^{-1}(R^{\prime})$. Since $\phi$ transforms $\upsilon$ into $\upsilon^{\prime}$, we shall have $\upsilon^{\prime}(R^{\prime}) = \upsilon(R)$. The fact that $\phi$ is $c$-Lipschitz gives us $\phi(R_{\varepsilon}) \subseteq R^{\prime}_{c \varepsilon}$, from which it follows that
		\[
			\upsilon^{\prime}(R^{\prime}_{c \varepsilon}) = \upsilon(\phi^{-1}(R^{\prime}_{c \varepsilon})) \geq \upsilon(\phi^{-1}(\phi(R_{\varepsilon}))) \geq \upsilon(R_{\varepsilon})
		\]
		By combining this inequality with $\upsilon^{\prime}(R^{\prime}) = \upsilon(R)$ we get
		\[
			\frac{\upsilon^{\prime}(R^{\prime}_{c\varepsilon}) - \upsilon^{\prime}(R^{\prime})}{c \varepsilon} \geq \frac{\upsilon(R_{\varepsilon}) - \upsilon(R)}{c \varepsilon}
		\]
		And by taking limit $\varepsilon \to 0$ we reach conclusion
		\[
			(\upsilon^{\prime})^{+}(R^{\prime}) \geq \frac{1}{c}\upsilon^+(R)
		\]
		So for every closed $R^{\prime} \subseteq M^{\prime}$ we can find $R \subseteq M$ that has the same measure and whose surface area is at most $c$ times the surface area of $R^{\prime}$. Thus we shall have
		\[
			I_{\upsilon}(t) \leq c I_{\upsilon^{\prime}}(t)
		\]
	\end{proof}

	We are going to apply the above lemma to get lower bounds on $I_{\mu}$. The role of $M^{\prime}$ will play our cube with the Lebesgue measure $\mu$ on it. The role of $M$ will play the space $\mathbb{R}^n$ with Gaussian measure $\gamma_n$ defined by its density at a point $x = (x_1, \ldots, x_n)$ as
	\[
		\frac{d\gamma_n}{d x} = e^{-\pi(x_1^2 + \ldots + x_n^2)}
	\]

	In the one-dimensional case the map $\Phi$ defined by
	\[
		\Phi(a) = \int_{-\infty}^{a} e^{-\pi x^2} dx
	\]
	transforms $(-\infty; +\infty)$ into $(0; 1)$. Also $\Phi$ turns Gaussian measure $\gamma_1$ on $(-\infty; +\infty)$ into the Lebesgue measure on $(0; 1)$. Indeed, the Gaussian measure of the segment $[a; b]$ is equal to the integral
	\[
		\int_{a}^{b} e^{-\pi x^2} dx
	\]
	of its density, which in turn is equal to $\Phi(b) - \Phi(a)$, but the image of $[a; b]$ under our map $\Phi$ is $[\Phi(a); \Phi(b)]$, whose Lebesgue measure is equal to $\Phi(b) - \Phi(a)$.

	Now the role of $\phi$ in the above lemma will be played by
	\[
		\phi(x_1, \ldots, x_n) = (\Phi(x_1), \ldots, \Phi(x_n)),
	\]
	i.\,e. we are applying $\Phi$ coordinatewise. It indeed transforms $(-\infty; +\infty)^n = \mathbb{R}^n$ into $(0; 1)^n$. For every box $[a_1; b_1] \times \ldots \times [a_n; b_n]$ we could note that
	\begin{multline*}
		\gamma_n([a_1; b_1] \times \ldots \times [a_n; b_n]) = \int_{a_1}^{b_1} \ldots \int_{a_n}^{b_n} e^{-\pi x_1^2} \cdot \ldots \cdot e^{-\pi x_n^2} dx_n \ldots dx_1 \\ = \left(\int_{a_1}^{b_1} e^{-\pi x_1^2}dx_1\right) \ldots \left(\int_{a_n}^{b_n} e^{-\pi x_n^2}dx_n\right) \\ = (\Phi(b_1) - \Phi(a_1)) \ldots (\Phi(b_n) - \Phi(a_n)) = \mu(\phi([a_1; b_1] \times \ldots \times [a_n; b_n])),
	\end{multline*}
	so $\phi$ turns measure $\gamma_n$ into $\mu$. And finally, map $\phi$ is $1$-Lipschitz since $|\Phi^{\prime}(x)| = |e^{-\pi x^2}| \leq 1$ and $\phi$ applies $\Phi$ coordinatewise.

	As we see the requirements of Proposition \ref{transfer} are met. So the isoperimetric profile $I_{\mu}$ of our unit cube could be bounded below by $I_{\gamma_n}$. But what do we know about $I_{\gamma_n}$? Well, there are tight inequalities on the isoperimetric profile of Gaussian measures, but here we would only need the following theorem.

	\begin{theorem}[{{\cite[Lemma 2.2.2]{gine_nickl}}}]
		\label{gaussian_iso_ineq}
		Let $\gamma^n$ be the standard Gaussian measure defined by its density at a point $x = (x_1, \ldots, x_n) \in \mathbb{R}^n$ as
		\[
			\frac{d\gamma^n}{dx} = \frac{1}{\sqrt{2\pi}^n} e^{-\frac{1}{2}(x_1^2 + \ldots + x_n^2)}
		\]
		Amongst all subsets $A \subset \mathbb{R}^n$ with fixed measure $\gamma_n(A) \in (0; \frac{1}{2})$ the minimum surface area is attained at half-spaces.
	\end{theorem}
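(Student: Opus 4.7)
The plan is to deduce this as a limiting case of the isoperimetric inequality on the high-dimensional sphere, following the classical Poincaré--Sudakov--Tsirelson--Borell route. This is the natural approach given the remarks at the end of the previous section: symmetrization works cleanly on the sphere, and the Gaussian measure on $\mathbb{R}^n$ arises as a projected limit of uniform measures on spheres of growing radius.

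First I would establish the spherical isoperimetric inequality on $S^{N-1}$: among measurable subsets of fixed normalized surface measure, geodesic balls (spherical caps) minimize the Minkowski boundary content. The standard argument proceeds by two-point symmetrization with respect to hyperplanes through the origin, which preserves the uniform measure and does not increase the surface area; iterating such symmetrizations and passing to a Hausdorff limit drives an arbitrary optimizer into a cap. This step can be imported from the spherical isoperimetric inequality already referenced in the paper.

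Next I would invoke Poincaré's projection lemma: the pushforward of the normalized surface measure on $\sqrt{N}\, S^{N-1} \subset \mathbb{R}^N$ under the coordinate projection $\pi_n\colon (x_1, \ldots, x_N) \mapsto (x_1, \ldots, x_n)$ converges weakly to the standard Gaussian measure $\gamma^n$ on $\mathbb{R}^n$ as $N \to \infty$. Crucially, $\pi_n^{-1}$ of an affine half-space $H = \{x_1 \leq a\}$ in $\mathbb{R}^n$ is a spherical cap on $\sqrt{N}\, S^{N-1}$. For a closed $A \subseteq \mathbb{R}^n$ with $\gamma^n(A) = \alpha \in (0; \tfrac{1}{2})$, set $A_N = \pi_n^{-1}(A) \cap \sqrt{N}\, S^{N-1}$ and let $C_N$ be the spherical cap of the same normalized measure. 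By the spherical inequality, the geodesic $\delta$-enlargement of $C_N$ has measure no larger than that of the geodesic $\delta$-enlargement of $A_N$.

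Finally I would pass to the limit $N \to \infty$. The caps $C_N$ project to half-spaces of Gaussian measure $\alpha$, and for short scales the geodesic metric on $\sqrt{N}\, S^{N-1}$ agrees with the ambient Euclidean metric on the projected coordinates up to an error that vanishes as $N \to \infty$. Combining the inequality for $\delta$-enlargements with the limit $\delta \to 0^+$ then gives $\gamma^+_n(A) \geq \gamma^+_n(H)$ for the matching half-space $H$. The main obstacle is precisely the transfer of the \emph{surface area} (not merely the measure) through this limit: one must show that projecting the geodesic $\delta$-enlargement on the sphere into $\mathbb{R}^n$ does not lose mass relative to the Euclidean $\delta$-enlargement by more than an $N$-dependent error that tends to zero, which requires a careful comparison of chordal versus geodesic distances together with uniform concentration of $\pi_n$ applied to points at radius $\sqrt{N}$. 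Once that comparison is in hand, the extremality of half-spaces follows directly from the extremality of caps.
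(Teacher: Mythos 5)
Your outline is the classical spherical-limit (Poincar\'e--Sudakov--Tsirelson--Borell) derivation, and it is sound in structure: caps are extremal on $S^{N-1}$ by two-point symmetrization, half-spaces arise as projections of caps under the Maxwell--Poincar\'e projection lemma, and the enlargement inequality passes to the limit before differentiating to get the boundary-measure statement. The paper itself gives no proof --- it imports the theorem by citation --- but the route it points to in its concluding remarks on the euclidean ball section (spherical isoperimetry, from which the Gaussian inequality is derived) is exactly the one you sketch, so your proposal matches the intended argument.
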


	In general, a Gaussian measure $\gamma_{\mu, \sigma^2}^n$ is a measure defined by its density at a point $x \in \mathbb{R}^n$ as
	\[
		\frac{d \gamma_{\mu, \sigma^2}^n}{dx} = \frac{1}{\sqrt{2 \pi \sigma^2}^n}e^{-\frac{1}{2\sigma^2}\Vert x - \mu \Vert^2}
	\]
	But Gaussian measures are equivalent to each other under translation and scaling. For example, if we shrink the standard Gaussian measure $\gamma^n$ by a factor of $\sqrt{2\pi}$, the density $\rho^{\prime}$ of the resulting measure is related to the density $\rho$ of $\gamma^n$ as
	\[
		\rho^{\prime}(x) = \sqrt{2 \pi}^n \rho(\sqrt{2\pi}x) = \sqrt{2\pi}^n \frac{1}{\sqrt{2\pi}^n} e^{-\frac{1}{2}\Vert \sqrt{2\pi} x \Vert^2} = \frac{d\gamma_n}{dx}
	\]
	And we could also note that for $A \subset \mathbb{R}^n$
	\[
		\gamma^n(A) = \gamma_n\left(\frac{1}{\sqrt{2\pi}}A\right) \quad (\gamma^n)^+(A) = \frac{1}{\sqrt{2\pi}}\gamma_n^+\left(\frac{1}{\sqrt{2\pi}}A\right)
	\]
	So shrinking everything by a factor of $\sqrt{2\pi}$ in Theorem \ref{gaussian_iso_ineq} would not change the fact that half-spaces are optimal solutions to the isoperimetric problem. And that is why to figure out lower bounds on $I_{\gamma_n}$ we would only need to consider half-spaces.

	The density of $\gamma_n$ at a point $x$ only depends on $\Vert x \Vert$, so our measure is rotation-invariant, which means that we could only consider half-spaces $H_a$ defined by $x_n \leq a$ for some $a$. First, we could note that
	\begin{multline*}
		\gamma_n(H_a) = \gamma_n(\underbrace{(-\infty; +\infty) \times \ldots \times (-\infty; +\infty)}_{n - 1 \textrm{ times}} \times (-\infty; a]) \\ = \underbrace{\gamma_1((-\infty; +\infty)) \times \ldots \times \gamma_1((-\infty; +\infty))}_{n - 1 \textrm{ times}} \times \gamma_1((-\infty; a]) \\ = \gamma_1((-\infty; a]) = \Phi(a)
	\end{multline*}
	And, because of this last observation,
	\begin{multline*}
		\gamma_n^+(H_a) = \lim_{\varepsilon \to 0} \frac{\gamma_n((H_a)_{\varepsilon}) - \gamma_n(H_a)}{\varepsilon} = \lim_{\varepsilon \to 0} \frac{\gamma_n(H_{a + \varepsilon}) - \gamma_n(H_a)}{\varepsilon} \\ = \lim_{\varepsilon \to 0} \frac{\gamma_1((-\infty; a + \varepsilon]) - \gamma_1((-\infty; a])}{\varepsilon} \\ = \lim_{\varepsilon \to 0} \frac{\gamma_1((-\infty; a]_{\varepsilon}) - \gamma_1((-\infty; a))}{\varepsilon} = \gamma_1^+((-\infty; a])
	\end{multline*}

	Equalities
	\[
		\gamma_n(H_a) = \gamma_1((-\infty; a]) \quad \gamma_n^+(H_a) = \gamma_1^+((-\infty; a])
	\]
	imply
	\[
		I_{\gamma_n} = I_{\gamma_1}
	\]
	And now we only have to estimate $I_{\gamma_1}$. To do that we need to consider intervals $(-\infty; a]$ for $a < 0$. The measure of such an interval is $\Phi(a)$ and the surface area is $\Phi^{\prime}(a) = e^{-\pi a^2}$, from which we get
	\[
		I_{\gamma_1}(\Phi(a)) = e^{-\pi a^2}
	\]

	By combining our observations we conclude
	\begin{theorem}[{{\cite[Theorem 7]{ros}}}]
		\label{mu_iso}
		For the Lebesgue measure $\mu$ on the unit cube $(0; 1)^n$ isoperimetric inequality
		\[
			I_{\mu}(t) \geq e^{-\pi \Phi^{-1}(t)^2}
		\]
		holds for all $t \in (0; \frac{1}{2})$.
	\end{theorem}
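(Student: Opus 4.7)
The plan is to assemble the ingredients that the excerpt has already laid out. First I would invoke Proposition \ref{transfer} with $M = \mathbb{R}^n$ carrying the Gaussian measure $\gamma_n$, $M' = (0;1)^n$ carrying the Lebesgue measure $\mu$, and the coordinatewise map $\phi(x_1,\ldots,x_n) = (\Phi(x_1),\ldots,\Phi(x_n))$. The excerpt has already verified both hypotheses: $\phi$ pushes $\gamma_n$ forward to $\mu$ (by the product computation on boxes) and is $1$-Lipschitz because $|\Phi'(x)| = e^{-\pi x^2} \leq 1$ and $\phi$ acts coordinatewise. Applying Proposition \ref{transfer} with $c = 1$ therefore gives
\[
I_{\gamma_n}(t) \leq I_{\mu}(t) \quad \text{for all } t \in \bigl(0; \tfrac{1}{2}\bigr).
\]

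Next I would compute $I_{\gamma_n}$ explicitly. By Theorem \ref{gaussian_iso_ineq}, half-spaces are isoperimetric for the standard Gaussian $\gamma^n$, and, as the excerpt observes, rescaling by $\sqrt{2\pi}$ turns $\gamma^n$ into $\gamma_n$ without changing which sets are optimal (half-spaces are mapped to half-spaces). So it suffices to test $I_{\gamma_n}$ on half-spaces $H_a = \{x_n \leq a\}$. The product structure of $\gamma_n$ gives $\gamma_n(H_a) = \Phi(a)$ and the translation relation $(H_a)_\varepsilon = H_{a+\varepsilon}$ gives $\gamma_n^+(H_a) = \Phi'(a) = e^{-\pi a^2}$, reducing the problem to one dimension and showing
\[
I_{\gamma_n}(\Phi(a)) = e^{-\pi a^2}, \qquad a < 0.
\]

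Finally, I would substitute $t = \Phi(a)$, i.e.\ $a = \Phi^{-1}(t)$ for $t \in (0;\tfrac{1}{2})$, to rewrite the last identity as $I_{\gamma_n}(t) = e^{-\pi \Phi^{-1}(t)^2}$, and combine with the transfer inequality $I_{\mu}(t) \geq I_{\gamma_n}(t)$ to conclude
\[
I_{\mu}(t) \geq e^{-\pi \Phi^{-1}(t)^2}.
\]

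There is no genuine obstacle left: every nontrivial ingredient (the Lipschitz transfer principle, the Gaussian isoperimetric inequality, and the product computation identifying $\gamma_n$ on half-spaces with $\gamma_1$ on half-lines) has already been established earlier in this section. The only care needed is to keep straight the distinction between the variance-$1$ Gaussian $\gamma^n$ of Theorem \ref{gaussian_iso_ineq} and the density $e^{-\pi\|x\|^2}$ Gaussian $\gamma_n$ used in the transfer, but the rescaling invariance of the Gaussian isoperimetric property makes this bookkeeping routine.
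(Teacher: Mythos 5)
Your proposal is correct and follows exactly the route the paper takes: the paper's own proof of Theorem \ref{mu_iso} is the one-line chain $I_{\mu}(\Phi(a)) \geq I_{\gamma_n}(\Phi(a)) = I_{\gamma_1}(\Phi(a)) = e^{-\pi a^2}$, resting on the same previously established ingredients you cite (the transfer via the coordinatewise $1$-Lipschitz map $\phi$, the Gaussian isoperimetric inequality with the $\sqrt{2\pi}$ rescaling, and the half-space computation). No gaps; the bookkeeping point about $\gamma^n$ versus $\gamma_n$ is handled just as the paper does.
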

	\begin{proof}
		Note that for all $a < 0$
		\[
			I_{\mu}(\Phi(a)) \geq I_{\gamma_n}(\Phi(a)) = I_{\gamma_1}(\Phi(a)) = e^{-\pi a^2}
		\]
	\end{proof}

	Using this isoperimetric inequality we derive
	\begin{theorem}
		\label{cube_upper}
		Inside a unit cube $(0; 1)^n$ two bodies $A$ and $B$ of volume $\varepsilon \in (0; \frac{1}{2})$ are at a distance at most
		\[
			-2\Phi^{-1}(\varepsilon)
		\]
		Function $-2\Phi^{-1}(\varepsilon)$ is asymptotically equivalent to
		\[
			\frac{2}{\sqrt{\pi}} \sqrt{-\ln \varepsilon}
		\]
		as $\varepsilon \to 0$.
	\end{theorem}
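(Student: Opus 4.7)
The plan is to mimic the argument used in the proof of Theorem \ref{euc_ball_dist}, substituting the cube isoperimetric inequality (Theorem \ref{mu_iso}) for the spherical one and taking advantage of the fact that no asymptotic/$\tau$-regularization is needed here: Theorem \ref{mu_iso} is a clean inequality valid for every $n$ and every $t \in (0; \frac{1}{2})$. First I would rewrite the bound as $I_{\mu}(t) \geq \Phi^{\prime}(\Phi^{-1}(t))$, using $\Phi^{\prime}(a) = e^{-\pi a^2}$. This form immediately suggests the comparison function $y(\delta) = \Phi(\Phi^{-1}(\varepsilon) + \delta)$, which satisfies the ODE $y^{\prime}(\delta) = \Phi^{\prime}(\Phi^{-1}(y(\delta)))$ with initial value $y(0) = \varepsilon$, and which reaches $\frac{1}{2}$ precisely at $\delta_M = -\Phi^{-1}(\varepsilon)$ because $\Phi(0) = \frac{1}{2}$.

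Next, given $A$ and $B$ of volume $\varepsilon$ inside the unit cube, I would let $\delta_A$ and $\delta_B$ be the smallest values of $\delta$ for which $\mu(A_{\delta_A}) = \mu(B_{\delta_B}) = \frac{1}{2}$. By Lemma \ref{zero_dist}, once both enlargements reach volume $\frac{1}{2}$ they must meet, so $\operatorname{dist}(A, B) \leq \delta_A + \delta_B$. The Minkowski--Steiner formula combined with Theorem \ref{mu_iso} gives the one-sided differential inequality $\partial_{+}\mu(A_{\delta}) \geq I_{\mu}(\mu(A_{\delta})) \geq \Phi^{\prime}(\Phi^{-1}(\mu(A_{\delta})))$ so long as $\mu(A_{\delta}) \leq \frac{1}{2}$. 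A comparison argument, identical in structure to the one employed in Theorem \ref{euc_ball_dist}, then forces $\mu(A_{\delta}) \geq y(\delta)$ for all $\delta \in [0; \delta_M]$, and in particular $\delta_A \leq \delta_M = -\Phi^{-1}(\varepsilon)$. The same bound applies to $\delta_B$, so $\operatorname{dist}(A, B) \leq 2\delta_M = -2\Phi^{-1}(\varepsilon)$.

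Finally, the asymptotic equivalence $-2\Phi^{-1}(\varepsilon) \sim \frac{2}{\sqrt{\pi}}\sqrt{-\ln \varepsilon}$ as $\varepsilon \to 0$ is just the asymptotic expansion of $\Phi^{-1}$ recalled in the introduction and proved in Appendix \ref{asymp_phi_inv}, applied with a factor of $2$.

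The only subtle step is the comparison $\mu(A_{\delta}) \geq y(\delta)$ from the one-sided derivative inequality, which is handled exactly as in Theorem \ref{euc_ball_dist}: assume for contradiction that some moment $\delta_M < \delta_A$ existed; since the two functions coincide at $\delta = 0$ and $\partial_{+}\mu(A_{\delta}) \geq \partial_{+} y(\delta)$ whenever $\mu(A_{\delta}) \geq y(\delta)$, one obtains $\mu(A_{\delta_M}) \geq y(\delta_M) = \frac{1}{2}$, contradicting the minimality of $\delta_A$. The smoothness hypothesis on $A$ and $B$ discussed in the introduction ensures $\mu(A_{\delta})$ is regular enough for this comparison to be rigorous; any irregular initial body can be replaced by a mild enlargement of itself at negligible cost in volume and distance.
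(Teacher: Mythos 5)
Your proposal is correct and follows essentially the same route as the paper's own proof: the comparison function $y(\delta) = \Phi(\Phi^{-1}(\varepsilon) + \delta)$ is exactly the paper's $y(\delta) = \Phi(-\delta_M + \delta)$, the contradiction argument from the one-sided differential inequality is the same, and the final step $\operatorname{dist}(A,B) \leq \delta_A + \delta_B \leq 2\delta_M = -2\Phi^{-1}(\varepsilon)$ with the asymptotics from Appendix \ref{asymp_phi_inv} matches the paper verbatim.
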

	\begin{proof}
		We are interested in the least values $\delta_A, \delta_B$ such that the $\delta_A$-enlargement of body $A$ in the unit cube $(0; 1)^n$ will be of volume $\frac{1}{2}$ and the $\delta_B$-enlargement of body $B$ will be of volume $\frac{1}{2}$ too. For these enlargements we shall have
		\[
			\operatorname{dist}(A_{\delta_A}, B_{\delta_B}) = 0,
		\]
		from which
		\[
			\operatorname{dist}(A, B) \leq \delta_A + \delta_B
		\]
		follows.

		Isoperimetric inequality from Theorem \ref{mu_iso} provides an estimate on the growth of $\delta$-enlargements of our bodies:
		\begin{equation}
			\label{mu_diff_ineq}
			\partial_+ \mu(A_{\delta}) \geq e^{-\pi \Phi^{-1}(\mu(A_{\delta}))^2}
		\end{equation}
		when $\mu(A_{\delta}) < \frac{1}{2}$.

		By $\delta_M$ denote $-\Phi^{-1}(\varepsilon)$. Now consider the function
		\[
			y(\delta) = \Phi(-\delta_M + \delta)
		\]
		Assume that $\delta_M < \delta_A$. Functions $\mu(A_{\delta})$ and $y(\delta)$ coincide at $\delta = 0$. Furthermore, because of inequality (\ref{mu_diff_ineq}), we should have
		\begin{multline*}
			\mu(A_{\delta}) \geq y(\delta) \textrm{ and } \partial_+ \mu(A_{\delta}) \geq e^{-\pi \Phi^{-1}(\mu(A_{\delta}))^2} \geq e^{-\pi \Phi^{-1}(y(\delta))^2} \\ = e^{-\pi (-\delta_M + \delta)^2} = \partial_+ y(\delta)
		\end{multline*}
		for all $\delta \leq \delta_M$. But then we have a contradiction
		\[
			\frac{1}{2} = \mu(A_{\delta_A}) > \mu(A_{\delta_M}) \geq y(\delta_M) = \Phi(0) = \frac{1}{2}
		\]

		Thus $\delta_A$ and similarly $\delta_B$ are bounded above $\delta_M$, which implies
		\[
			\operatorname{dist}(A, B) \leq \delta_A + \delta_B \leq 2\delta_M = -2\Phi^{-1}(\varepsilon)
		\]
	\end{proof}

	\medskip

	\textbf{Conclusions.} 
	The transfer from one space to another might have lead to the loss of accuracy to some extent, so there is not much of what we could say about how precise our estimates are. The problem of finding optimal hypersurfaces in the $n$-cube also seems to be quite complicated. More details about the isoperimetric inequalities in a cube and their applications the reader may find in section $1.5$ of \cite{ros}.

	In a similar way we could derive lower bounds on the isoperimetric profile of the euclidean ball, since the transition to the space $\mathbb{R}^n$ with gaussian measure is possible(see \cite[Proposition 2.9]{L01}).

	\newpage

	\section{Simplexes and $\ell_p$-balls.}

	\textbf{Introduction.} In this section we are going to describe the approach used in \cite{Sodin2008} to prove an isoperimetric inequality for $\ell_p$ balls($p \in [1;2]$) by presenting a very similar proof of an isoperimetric inequality for simplexes. Much like in the case of the unit cube we will be performing a transfer to a different space(see Lemmas \ref{expon_into_delta}, \ref{profile_of_nu}). But to address the problems with Lipschitz continuity(see Lemma \ref{lipschitz_constant}) a number of new ideas and methods needs to be introduced.

	\medskip

	\subsection{Simplexes.}

	By $\mathbb{R}_+$ we denote the interval $(0; +\infty)$. Consider a regular simplex $\Delta_n$ defined as
	\[
		\Delta_n = \{ (x_1, \ldots, x_n) \in \mathbb{R}_+^n \mid x_1 + \ldots + x_n = 1 \}
	\]
	By $\mu$ we will denote the normalized Lebesgue measure on $\Delta_n$. Note that $\mu(\Delta_n) = 1$. Simple calculations show that the area of $\Delta_n$ is equal to $\frac{n\sqrt{n}}{n!}$. So if we set
	\[
		\omega_n = \left(\frac{n!}{n\sqrt{n}}\right)^{\frac{1}{n - 1}} \sim \frac{n}{e},
	\]
	the area of $\omega_n \Delta_n$ will be equal to $1$. By $\lambda$ denote the Lebesgue measure on $\omega_n \Delta_n$. One could note
	\begin{multline}
		\label{mu_lambda_det}
		\mu^+\left(\frac{1}{\omega_n} A\right) = \lim_{\varepsilon \to 0} \frac{\mu\left(\left(\frac{1}{\omega_n} A\right)_{\varepsilon}\right) - \mu\left(\frac{1}{\omega_n} A\right)}{\varepsilon} \\ = \lim_{\varepsilon \to 0} \frac{\lambda(A_{\omega_n \varepsilon}) - \lambda(A)}{\varepsilon} = \omega_n \lambda^+(A)
	\end{multline}
	for $A \subseteq \Delta_n$. 

	In this section we will be using a slightly different notion of an isoperimetric profile.

	\begin{definition}
		By the isoperimetric function we mean a function that maps $t \in (0; \frac{1}{2})$ to the infimum of possible values that $\mu^+(A)$ could take when $t \leq \mu(A) < \frac{1}{2}$
		\[
			\mathcal{I}_{\mu}(t) = \inf_{t \leq \mu(A) < \frac{1}{2}} \mu^+(A)
		\]
	\end{definition}

	Our observation (\ref{mu_lambda_det}) implies
	\begin{equation}
		\label{mu_lambda}
		\mathcal{I}_{\mu} = \omega_n \mathcal{I}_{\lambda},
	\end{equation}
	i.\,e. the isoperimetric functions are proportional.

	To solve our problem we would need estimates on $\mathcal{I}_{\lambda}$, but for the sake of simplicity we would be working with $\mathcal{I}_{\mu}$ instead. Yet again it is quite unclear how to deal with $\Delta_n$ as a space, so we would like to be able to transfer to a <<better>> space.

	The map $T\colon \mathbb{R}_+^n \to \Delta_n$ defined as
	\[
		T(x_1, \ldots, x_n) = \left(\frac{x_1}{x_1 + \ldots + x_n}, \ldots, \frac{x_n}{x_1 + \ldots + x_n} \right)
	\]
	transforms the measure $\nu_n$ on $\mathbb{R}_+^n$ defined by its density at a point $(x_1, \ldots, x_n) \in \mathbb{R}_+^n$ as
	\[
		\frac{d\nu_n}{dx} = e^{-x_1-\ldots-x_n}
	\]
	into the normalized Lebesgue measure $\mu$ on $\Delta_n$ as a corollary of the following lemma.

	\begin{lemma}[{{\cite[Lemma 2.1]{SZ}}}]
		\label{expon_into_delta}
		Let $X_1, \ldots, X_n$ be independent random variables each with density function $\frac{1}{2}e^{-|t|}$ and put $S = \sum_{i} | X_i |$. Then $(\frac{X_1}{S}, \ldots, \frac{X_n}{S})$ induces the normalized Lebesgue measure on the surface of $\ell_1^{n}$ ball. Moreover, $(\frac{X_1}{S}, \ldots, \frac{X_n}{S})$ is independent of $S$.
	\end{lemma}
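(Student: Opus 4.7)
The joint density of $(X_1, \ldots, X_n)$ is $\frac{1}{2^n} e^{-(|x_1| + \ldots + |x_n|)}$, which depends on the coordinates only through $S = \sum_i |x_i|$. I would exploit this in two complementary ways: first, split $\mathbb{R}^n$ into its $2^n$ orthants (each carrying probability exactly $2^{-n}$ by sign-symmetry of the Laplace distribution), and then, within each orthant, perform a polar-type change of variables separating the magnitude $S$ from the direction $X/S$.

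Concretely, restrict to the positive orthant: on it each $X_i$ has conditional density $e^{-x_i}$ on $(0,\infty)$, and I substitute $S = X_1 + \ldots + X_n$ together with $Y_i = X_i/S$ for $i = 1, \ldots, n-1$, with $Y_n = 1 - Y_1 - \ldots - Y_{n-1}$ understood. The inverse map $X_i = S Y_i$ has Jacobian $S^{n-1}$, which one verifies by a brief row-reduction of the matrix of partials: adding rows $1, \ldots, n-1$ to the last row collapses it to $(1, 0, \ldots, 0)$, and expansion along that row reduces to the determinant of $S \cdot I_{n-1}$. Consequently, the joint density of $(S, Y_1, \ldots, Y_{n-1})$ conditional on the positive orthant is proportional to $e^{-S} S^{n-1}$ on the domain $(0,\infty) \times \{Y_i \geq 0,\ \sum_{i<n} Y_i \leq 1\}$, a function of $S$ alone.

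This product form yields two conclusions in one stroke: (a) $S$ is independent of $(Y_1, \ldots, Y_{n-1})$, and hence of $(X_1/S, \ldots, X_n/S)$, conditional on the positive orthant; (b) $(Y_1, \ldots, Y_{n-1})$ is uniformly distributed on the $(n-1)$-simplex parametrization of this face. Because the parametrization $(Y_1, \ldots, Y_{n-1}) \mapsto (Y_1, \ldots, Y_{n-1}, 1 - \sum_{i<n} Y_i)$ has constant Jacobian with respect to the $(n-1)$-dimensional Hausdorff measure on the face, uniformity in the parametrization matches uniformity in the surface measure on that face.

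By sign-symmetry the identical analysis applies to every orthant, with the same distribution of $S$, the same uniform distribution on each face, and total probability $2^{-n}$ per face; assembling the $2^n$ faces then recovers the normalized Lebesgue (surface) measure on all of $\partial B_1^n$. For the joint independence claim, note that $S = \sum_i |X_i|$ is a function of the magnitudes, which are independent of the sign pattern under the Laplace distribution, while within each orthant $S$ is independent of the $Y_i$'s by (a); combining these two independences gives $S \perp (X_1/S, \ldots, X_n/S)$. The main obstacle I foresee is the bookkeeping for the Jacobian computation and confirming that the common constant relating the $Y$-parametrization to the $(n-1)$-dimensional surface measure is indeed the same across all $2^n$ faces — which follows from symmetry, so the normalization is consistent.
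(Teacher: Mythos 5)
Your argument is correct, and it is essentially the standard Schechtman--Zinn proof: the paper itself does not prove this lemma but cites it as \cite[Lemma 2.1]{SZ}, so there is no internal proof to compare against. Your orthant decomposition, the change of variables $x_i = S Y_i$ with Jacobian $S^{n-1}$ yielding the factorized density proportional to $e^{-S}S^{n-1}$ on $(0;\infty)\times\Delta$, the affine (hence constant-Jacobian) identification of the parameter simplex with each face, and the combination of sign/magnitude independence with the per-orthant independence of $S$ and $X/S$ together give exactly the claimed conclusions; the only point worth making fully explicit is that the conditional law of $S$ is the same Gamma$(n,1)$ in every orthant, which is what lets the per-orthant independence survive the mixture over the $2^n$ faces -- and you do note this.
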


	\noindent Indeed, because of the symmetry amongst the orthants of $\mathbb{R}^n$,\footnote{the orthants of $\mathbb{R}^n$ are multidimensional analogues of the quadrants of $\mathbb{R}^2$} we can restrict our attention to the positive orthant $\mathbb{R}_+^n$ in Lemma \ref{expon_into_delta} and reach the desired conclusion.

	But what do we know about the isoperimetric profile of $I_{\nu_n}$? The following lemma completely determines this isoperimetric profile in the one-dimensional case.

	\begin{lemma}[{{\cite[Remark 1]{expm}}}]
		\label{profile_of_nu}
		By $\nu$ denote $\nu_1$, then
		\[
			I_{\nu}(t) = \min(t, 1 - t),
		\]
		where the domain of $I_{\nu}$ is the whole interval $(0; 1)$.
	\end{lemma}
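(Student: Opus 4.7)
The plan is to prove matching upper and lower bounds on $I_{\nu}(t)$ by exhibiting two explicit families of extremal sets and then handling general $A$ by decomposition into intervals.

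\textbf{Upper bound.} Two one-parameter families suffice. I would compute $\nu((a, +\infty)) = e^{-a}$ and note that $(a, +\infty)_{\varepsilon} = (\max(0, a - \varepsilon), +\infty)$, which gives $\nu^+((a, +\infty)) = e^{-a}$; choosing $a = -\ln t$ gives $I_{\nu}(t) \leq t$. Symmetrically, $(0, a)$ has $\nu$-measure $1 - e^{-a}$ and surface area $e^{-a}$, so $a = -\ln(1 - t)$ gives $I_{\nu}(t) \leq 1 - t$. Combining, $I_{\nu}(t) \leq \min(t, 1 - t)$.

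\textbf{Lower bound.} I would first reduce to the case of an open set $A \subseteq \mathbb{R}_+$: since $A_{\varepsilon} = (\overline{A})_{\varepsilon}$ and $\nu^+(A)$ is $+\infty$ unless $\nu(\overline{A}) = \nu(A)$, the infimum is attained on sets that agree up to a null set with open sets, so I may as well take $A$ to be a countable disjoint union of its connected components $A = \bigsqcup_i (a_i, b_i)$ with $0 \leq a_i < b_i \leq +\infty$. A direct computation of the right derivative at $\varepsilon = 0$ yields
\[
	\nu(A) = \sum_i \left( e^{-a_i} - e^{-b_i} \right),
	\qquad
	\nu^+(A) = \sum_i \left( e^{-a_i} \mathbf{1}_{a_i > 0} + e^{-b_i} \mathbf{1}_{b_i < +\infty} \right),
\]
with the convention $e^{-\infty} = 0$.

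The decisive step is a case split on whether $0$ lies in the closure of $A$. If every $a_i$ is strictly positive, then trivially
\[
	\nu^+(A) \geq \sum_i e^{-a_i} \geq \sum_i \left( e^{-a_i} - e^{-b_i} \right) = t.
\]
If instead $a_1 = 0$, I would pass to the complement $A^c = \mathbb{R}_+ \setminus A$, which does not touch $0$. A term-by-term comparison of the surface-area formula above for $A$ and for $A^c$ shows that $\nu^+(A) = \nu^+(A^c)$: in both decompositions one sums $e^{-x}$ over the common set of interior boundary points $\{b_1, a_2, b_2, a_3, \dots\} \cap (0, +\infty)$, and neither formula receives a contribution from the ideal point $0 \notin \mathbb{R}_+$. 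Applying the first case to $A^c$ then yields $\nu^+(A) = \nu^+(A^c) \geq \nu(A^c) = 1 - t$. Either way $\nu^+(A) \geq \min(t, 1 - t)$, matching the upper bound.

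The proof has essentially no hard step; the only subtle point is that the two halves of the symmetric answer $\min(t, 1 - t)$ correspond to genuinely different extremal shapes $(a, +\infty)$ and $(0, a)$, and the complement-symmetry identity $\nu^+(A) = \nu^+(A^c)$ is what lets a single argument cover both regimes $t \leq 1/2$ and $t \geq 1/2$.
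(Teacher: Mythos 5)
Your statement of the problem and your extremal sets are the right ones, and for competitors that are finite unions of intervals with pairwise disjoint closures your computation of $\nu(A)$ and $\nu^+(A)$, the split according to whether $0$ is a boundary point, and the complement identity $\nu^+(A)=\nu^+(A^c)$ give a complete and correct proof of both bounds. Note that the paper itself offers no proof to compare with: the lemma is simply quoted from Remark 1 of the cited reference, so your argument has to stand entirely on its own.

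It does not yet do so, because the reduction to countable unions of open intervals and the formula you base the lower bound on are both flawed. Replacing $A$ by $\overline{A}$ (justified, since $A_{\varepsilon}=(\overline{A})_{\varepsilon}$ and $\nu^+(A)=+\infty$ when $\nu(\overline{A})>\nu(A)$) reduces you to \emph{closed} sets, not open ones, and a closed set of positive measure need not coincide up to a null set with a union of intervals -- a fat Cantor set is the standard example, and such competitors must be excluded by a separate argument. More seriously, the identity $\nu^+(A)=\sum_i\bigl(e^{-a_i}\mathbf{1}_{a_i>0}+e^{-b_i}\mathbf{1}_{b_i<+\infty}\bigr)$ is false for infinite unions of intervals, and false in the direction that matters: for $A=(0,2)\setminus C$ with $C\subset[1,2]$ a Cantor set of measure zero one has $A_{\varepsilon}=(0,2+\varepsilon]$, hence $\nu^+(A)=e^{-2}$, while your sum is infinite; for the same set $\nu^+(A^c)=+\infty$, so the complement identity fails too (and already $A=(0,1)\cup(1,2)$, where the true value is $e^{-2}$ rather than $2e^{-1}+e^{-2}$, shows the formula silently assumes components do not share endpoints). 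Since your lower bound estimates the sum rather than the true Minkowski content, it proves nothing for such sets, even though the conclusion happens to hold for them. The repair is standard but has to be said: pass to closed sets, merge touching components (a null-set change that only decreases $\nu^+$), observe that any closed competitor whose complement has infinitely many bounded gaps inside some $[0,R]$ has $\nu^+=+\infty$ (each gap of length at least $2\varepsilon$ contributes at least $2\varepsilon e^{-R}$ of added measure, and the number of such gaps is unbounded as $\varepsilon\to0^+$), and only then run your finite-union computation on what remains; alternatively, invoke the paper's standing convention that all bodies are as smooth as needed, under which your argument is already complete.
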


	For a measure $\upsilon$ we could consider its isoperimetric constant -- the largest value $Is(\upsilon)$ for which the following holds for all subsets $A$ with $\upsilon(A) \in (0; 1)$
	\[
		\upsilon^+(A) \geq Is(\upsilon)\min(\upsilon(A), 1 - \upsilon(A))
	\]
	And by Lemma \ref{profile_of_nu} we have $Is(\nu) = 1$. Now we could note that\footnote{by this we mean the product measure $\underbrace{\nu \times \ldots \times \nu}_{n \textrm{ times }}$} $\nu_n = \nu^n$ since the density of $\nu_n$ at a point $(x_1, \ldots, x_n) \in \mathbb{R}_+^n$ could be written as a product
	\[
		e^{-x_1} \ldots e^{-x_n}
	\]

	And thus the following theorem
	\begin{theorem}[{{\cite[Theorem 1.1]{prod}}}]
		For triple $(X, d, \psi)$ -- space, metric, measure,
		\[
			Is(\psi^n) \geq \frac{1}{2 \sqrt{6}} Is(\psi)
		\]
	\end{theorem}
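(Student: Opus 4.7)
The plan is to reformulate the isoperimetric constant $Is(\psi)$ as the best constant in a functional inequality, tensorize that inequality to $\psi^n$, and then translate back to an isoperimetric statement on the product. This is the Bobkov--Houdré framework.

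First, I would establish the standard equivalence between the geometric isoperimetric constant and the $L^1$ Poincaré (Cheeger) constant: $Is(\psi)$ equals the best $h > 0$ for which
\[
h \int |f - m(f)| \, d\psi \leq \int |\nabla f| \, d\psi
\]
holds for all Lipschitz $f$, where $m(f)$ is a median of $f$. The forward implication comes from a coarea argument applied level set by level set (each level set above a median has measure at most $1/2$, so the isoperimetric definition applies directly); the reverse implication follows by taking $f$ to be a smooth approximation of an indicator of a candidate isoperimetric set.

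Next, I would tensorize the functional inequality to $\psi^n$. The $L^1$ deviation from a median does not decompose additively over independent coordinates, and this is the technical core of the result. The Bobkov--Houdré remedy is to pass through a variance-type quantity: one uses a comparison between $\int |f - m(f)|\, d\psi$ and the square root of an appropriate (possibly localized or truncated) variance, exploits the clean subadditivity of variance over product measures, and then translates back. This detour forces a universal constant loss; tracking the comparison carefully produces a factor $1/\sqrt{6}$ in the tensorized functional inequality, with an additional factor $1/2$ arising when one converts between the functional form (involving medians of general Lipschitz functions) and the geometric form (involving sets of measure at most $1/2$). The combination yields the stated $1/(2\sqrt{6})$.

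Finally, I would apply the tensorized inequality to a Lipschitz approximation of the indicator of an arbitrary measurable $A \subset X^n$ with $\psi^n(A) \leq 1/2$ and pass to the limit, recovering $(\psi^n)^+(A) \geq \frac{1}{2\sqrt{6}} Is(\psi) \cdot \psi^n(A)$, which is the desired Cheeger inequality for $\psi^n$. The main obstacle is the middle step: there is no direct product formula for $L^1$ deviations from a median, and the needed comparison with a variance-like functional must be tight enough to avoid any constant loss worse than $\sqrt{6}$. Organizing the bounds so that this constant does not degrade as $n$ grows---and in particular not iterating a weaker two-factor estimate naively, which would accumulate the $1/(2\sqrt{6})$ factor $n$ times---is the technical heart of the argument, and is what makes the choice of an intermediate $L^2$-type quantity essential.
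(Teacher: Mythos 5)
This theorem is not proved in the paper at all: it is imported verbatim from \cite[Theorem 1.1]{prod} (Bobkov--Houdr\'e) and used as a black box to get the inequality $I_{\nu^n}(t) \geq \frac{1}{2\sqrt{6}}t$. So there is no internal argument to compare yours against; your sketch has to be judged as a reconstruction of the cited proof. At the level of strategy it does follow the known route: identify $Is(\psi)$ with the optimal constant in the $L^1$ Cheeger-type inequality $h \int |f - m(f)|\,d\psi \leq \int \Vert \nabla f \Vert\, d\psi$, tensorize a related functional inequality over the product, and convert back to sets at the cost of a universal factor.

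The genuine gap is that the step which \emph{is} the theorem is asserted rather than proved. You never specify the intermediate functional, never prove the two-sided comparison between $\int |f - m(f)|\,d\psi$ and your ``variance-type quantity,'' and never derive the constant; ``tracking the comparison carefully produces a factor $1/\sqrt{6}$'' is precisely the content of Bobkov--Houdr\'e's argument, not a step you may take for granted. Moreover, the one concrete mechanism you do invoke --- subadditivity of the variance over independent coordinates combined with the coordinatewise Cheeger inequality --- naturally lower-bounds $\sum_i \int |\partial_i f|\,d\psi^n$, i.e.\ the $\ell^1$ combination of the coordinate gradient moduli, whereas $Is(\psi^n)$ in the metric used here involves the Euclidean modulus $\Vert \nabla f \Vert_2 \leq \sum_i |\partial_i f|$; a lower bound on the larger quantity says nothing about the smaller one, and in the worst case this is exactly where a naive tensorization loses a factor of order $\sqrt{n}$. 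Avoiding that loss is why the actual proof works with a functional inequality whose structure (square-root/Minkowski type) is compatible with the Euclidean gradient on the product; your sketch names the danger of dimension-dependent degradation but does not supply the device that prevents it, nor the computation giving $\frac{1}{2\sqrt{6}}$. As it stands the proposal is a correct description of the framework, but not a proof.
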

	\noindent gives us
	\[
		I_{\nu^n}(t) \geq \frac{1}{2\sqrt{6}} \min(t, 1 - t)
	\]
	or\footnote{recall that, generally, when we talk about isoperimetric profiles we are only interested in the values of $t \in (0; \frac{1}{2})$, i.\,e. the domain of $I_{\nu_n}$ is $(0; \frac{1}{2})$}
	\begin{equation}
		\label{nu_isoperimetric}
		I_{\nu_n}(t) \geq \frac{1}{2\sqrt{6}}t
	\end{equation}

	So we have a map $T\colon \mathbb{R}_+^n \to \Delta_n$ that transforms $\nu_n$ into $\mu$ and a lower bound on $I_{\nu_n}(t)$. But to use Proposition \ref{transfer} we would also need our mapping $T$ to be Lipschitz continuous.

	In the neighborhood of a point $x \in \mathbb{R}_+^n$ the behavior of our map $T$ could be described by a linear operator defined by the matrix, whose entries are
	\[
		\frac{\partial T_j(x)}{\partial x_i},
	\]
	where $T_j(x)$ is the $j$-th coordinate of $T(x)$.
	We are interested in the norm of this linear operator. The next lemma gives an upper bound

	\begin{lemma}[corresponds to Lemma 1 from \cite{Sodin2008}]
		\label{lipschitz_constant}
		\[
			\left\Vert \frac{\partial T_j(x)}{\partial x_i} \right\Vert_2 \leq \frac{1}{\Vert x \Vert_1}(1 + \sqrt{n}\Vert T(x) \Vert_2)
		\]
	\end{lemma}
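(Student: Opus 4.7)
The natural first step is to compute the Jacobian matrix of $T$ at a point $x \in \mathbb{R}_+^n$ explicitly. Writing $s = x_1 + \cdots + x_n = \Vert x \Vert_1$ and differentiating $T_j(x) = x_j/s$, the quotient rule gives
\[
    \frac{\partial T_j(x)}{\partial x_i} = \frac{\delta_{ij}}{s} - \frac{x_j}{s^2} = \frac{1}{s}\bigl(\delta_{ij} - T_j(x)\bigr).
\]
This is the only real computation required; everything afterwards is algebraic manipulation of matrices.

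The key observation I would then make is that the formula above exhibits the Jacobian as a rank-one perturbation of a multiple of the identity. Specifically, letting $\mathbf{1} = (1, \ldots, 1) \in \mathbb{R}^n$ denote the all-ones vector and viewing $T(x)$ as a column vector, we have the matrix identity
\[
    \left[\frac{\partial T_j(x)}{\partial x_i}\right]_{j,i} = \frac{1}{s}\bigl(I - T(x)\, \mathbf{1}^{T}\bigr).
\]
From here, the operator-norm bound follows from the triangle inequality together with the standard fact that a rank-one matrix $u v^{T}$ has $\ell_2$ operator norm equal to $\Vert u \Vert_2 \Vert v \Vert_2$.

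Concretely, I would finish by writing
\[
    \left\Vert \frac{\partial T_j(x)}{\partial x_i} \right\Vert_2 \leq \frac{1}{s}\bigl( \Vert I \Vert_2 + \Vert T(x)\, \mathbf{1}^{T} \Vert_2 \bigr) = \frac{1}{s}\bigl(1 + \Vert T(x) \Vert_2 \, \Vert \mathbf{1} \Vert_2\bigr) = \frac{1}{\Vert x \Vert_1}\bigl(1 + \sqrt{n}\, \Vert T(x) \Vert_2\bigr),
\]
which is precisely the desired inequality. There is no real obstacle: the only thing to be careful about is the convention on whether the Jacobian is $(\partial T_j/\partial x_i)$ or its transpose, but since both $I$ and any rank-one matrix $u v^{T}$ have the same operator norm as their transpose, the argument is insensitive to this choice. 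The proof is therefore essentially a one-line matrix computation once the decomposition $J = s^{-1}(I - T(x)\mathbf{1}^{T})$ is spotted.
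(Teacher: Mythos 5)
Your proof is correct and follows essentially the same route as the paper: both compute the Jacobian entries $\frac{1}{\Vert x \Vert_1}\left(\delta_{ij} - T_j(x)\right)$ and then apply the triangle inequality together with the $\ell_1$--$\ell_2$ comparison $\Vert \mathbf{1} \Vert_2 = \sqrt{n}$. The only difference is presentational — the paper bounds $\Vert \Delta y \Vert_2$ for an increment $\Delta x$ directly, while you package the same estimate as the operator norm of the rank-one perturbation $\frac{1}{s}\left(I - T(x)\mathbf{1}^{T}\right)$ — and your remark that the transpose convention is immaterial is accurate.
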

	\begin{proof}
		First, we will calculate the entries of our matrix, i.\,e.\,the partial derivatives
		\begin{multline*}
			\frac{\partial}{\partial x_i} T_j(x) = \frac{\partial}{\partial x_i} \frac{x_j}{x_1 + \ldots + x_n} \\ = \frac{(x_1 + \ldots + x_n)\frac{\partial}{\partial x_i} x_j - x_j\frac{\partial}{\partial x_i}(x_1 + \ldots + x_n)}{(x_1 + \ldots + x_n)^2} \\ = \frac{1}{x_1 + \ldots + x_n}\left(\delta_{ij} - \frac{x_j}{x_1 + \ldots + x_n}\right)
		\end{multline*}
		If $\Delta y$ is the image of $\Delta x$, then
		\begin{equation}
			\label{delta_y_j}
			\Delta y_j = \sum_{i} \frac{\partial T_j}{\partial x_i} \Delta x_i = \frac{1}{\Vert x \Vert_1}\left(\Delta x_j - \frac{x_j}{\Vert x \Vert_1}\sum_{i} \Delta x_i\right)
		\end{equation}
		The length of the vector, whose coordinates are $\frac{x_j}{\Vert x \Vert_1} \sum_{i} \Delta x_i$, could be estimated as
		\[
			\Vert T(x) \Vert_2 \left| \sum_{i} \Delta x_i \right| \leq \sqrt{n} \Vert T(x) \Vert_2 \Vert \Delta x \Vert_2
		\]
		since coordinates $\frac{x_j}{\Vert x \Vert_1}$ define $T(x)$ and, clearly,
		\[
			\sum_{i} \left|\Delta x_i\right| \leq \sqrt{n} \left(\sum_{i} (\Delta x_i)^2\right)^{\frac{1}{2}}
		\]
		And now by triangle inequality from (\ref{delta_y_j}) we get
		\begin{multline*}
			\Vert \Delta y \Vert_2 \leq \frac{1}{\Vert x \Vert_1}\left( \Vert \Delta x \Vert_2 + \sqrt{n} \Vert T(x) \Vert_2 \Vert \Delta x \Vert_2 \right) \\ = \frac{1}{\Vert x \Vert_1}\left( 1 + \sqrt{n} \Vert T(x) \Vert_2 \right) \Vert \Delta x \Vert_2,
		\end{multline*}
		from which the statement of the lemma follows.
	\end{proof}

	Indeed, we cannot apply Proposition \ref{transfer} in our case since there are places where the image of the mapping $T$ varies wildly. For example, the simplex in $\mathbb{R}_+^n$ defined by $x_1 + \ldots + x_n = \delta$, where $\delta$ is a very small number.

	However, the upper bound from Lemma \ref{lipschitz_constant}
	\[
		\left\Vert \frac{\partial T_j(x)}{\partial x_i} \right\Vert_2 \leq \frac{1}{\Vert x \Vert_1}(1 + \sqrt{n}\Vert T(x) \Vert_2)
	\]
	tells us that the only parts of $\mathbb{R}_+^n$, where the variance of $T$ could be high, are the regions where $\Vert x \Vert_1$ is too small, which corresponds to a little corner of the orthant $\mathbb{R}_+^n$, or where $\Vert T(x) \Vert_2$ is too large, and, since the distance from the origin to the center of $\Delta_n$ -- $\frac{1}{\sqrt{n}}$ is much smaller than the distance from the origin to the vertices of $\Delta_n$ -- $1$, the region where $\Vert T(x) \Vert_2$ is too large corresponds to the little corners of $\Delta_n$ near the vertices.\footnote{here we are talking about the image of $T$}

	Our last observation suggests that the parts of our space, where $T$ does not behave the way we want it to, i.\,e.\,high variance, might be negligible. And now we are going to present an argument that will allow us to <<get rid>> of these regions, perform the transfer to the space $\mathbb{R}_+^n$ with measure $\nu_n$ and then apply the lower bound for $I_{\nu_n}$.

	But, first of all, since we want to estimate $\mathcal{I}_{\mu}(t)$ on the whole half-interval $(0; \frac{1}{2}]$, including the small values of $t$, and since we are intending to remove some negligible parts of our space in the main argument, we need to employ a different approach for those values of $t$ that might, perhaps, be lesser than the measure of the regions that we are getting rid of. In other words, an estimate on the surface area of the <<small>> subsets of $\Delta_n$.

	This could be done with the help of the following theorem.
	\begin{theorem}[{{\cite[Theorem 1.1]{logconc}}}]
		\label{log_concave_measure}
		Let $\psi$ be a log-concave probability measure on $\mathbb{R}^n$. For all measurable sets $A \subset \mathbb{R}^n$, for every point $x_0 \in \mathbb{R}^n$ and every number $r > 0$,
		\begin{multline*}
			\psi^{+}(A) \geq \frac{1}{2r}\Big(\psi(A) \ln \frac{1}{\psi(A)} + (1 - \psi(A))\ln \frac{1}{1 - \psi(A)} \\ + \ln \psi(\{ |x - x_0| \leq r \})\Big)
		\end{multline*}
	\end{theorem}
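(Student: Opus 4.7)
The plan is to apply the Lov\'asz--Simonovits localization lemma, reducing this multi-dimensional functional inequality for a log-concave measure $\psi$ to a one-dimensional problem for log-affine densities on segments.

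First, I would rewrite the claim as $\Phi(A) \geq 0$, where
\[
\Phi(A) = 2r\, \psi^+(A) - H(\psi(A)) - \ln \psi(\{|x - x_0| \leq r\}),
\]
with $H(t) = t \ln \frac{1}{t} + (1 - t) \ln \frac{1}{1 - t}$ the binary entropy. After approximating $A$ by a body with smooth boundary, $\psi^+(A)$ becomes the integral of the density of $\psi$ over $\partial A$, which is a quantity amenable to the localization framework. I would argue by contradiction: assume $\Phi(A_0) < 0$ for some set $A_0$ and derive an absurdity after localizing.

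Next, apply the Lov\'asz--Simonovits localization lemma. One introduces test functions that encode, respectively, the indicator of $A$, the indicator of the ball $B = \{|x - x_0| \leq r\}$, and a perimeter-type contribution, all normalized so that their $\psi$-integrals carry the relevant constants of $\Phi$. The lemma then produces a one-dimensional ``needle'' on which the analogous inequality must also fail. Concretely, it suffices to prove the inequality when $\psi$ is the log-affine probability measure $\rho(t)\,dt$ supported on a segment $[a,b] \subset \mathbb{R}^n$, $A$ is replaced by $A \cap [a,b]$, and the ball $B$ is replaced by its intersection with $[a,b]$, which is a sub-interval of length at most $2r$.

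Finally, solve the 1D problem. After discarding a null set, $A' := A \cap [a,b]$ is a finite union of subintervals, so $\psi^+(A')$ is a sum of $\rho$-values at finitely many interior boundary points, while $\psi(B \cap [a,b])$ is the $\rho$-integral over an interval of length $\leq 2r$ divided by $\int_a^b \rho$. The inequality thus becomes an elementary but delicate estimate for log-affine densities, which can be verified by reducing to the extremal configuration --- a single subinterval abutting one endpoint of $[a,b]$ --- where the factor $\frac{1}{2r}$ emerges precisely from the diameter of the localized ball. The main obstacle lies in this 1D verification: one must split into cases according to whether $A'$ is contained in, overlaps, or is disjoint from the localized ball, and bound the binary entropy $H(\psi(A'))$ in terms of the boundary density values using convexity of $-\log \rho$, so that the $\ln \psi(B \cap [a,b])$ term is absorbed correctly.
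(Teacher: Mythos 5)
The paper itself contains no proof of this statement: it is imported verbatim from Bobkov \cite{logconc}, so your proposal has to stand on its own, and its central step does not go through as described. The Lov\'asz--Simonovits localization lemma transfers the strict positivity of two fixed \emph{linear} functionals $\int f\,d\psi>0$, $\int g\,d\psi>0$ to a log-affine needle; it does not produce ``a one-dimensional needle on which the analogous inequality must also fail'', because the condition $\Phi(A)<0$ is not of that form: the right-hand side depends nonlinearly on the measure, through the entropy term $H(\psi(A))$ of your $\Phi$ and through $\ln\psi(\{|x-x_0|\le r\})$, and on a needle $\nu$ the values $\nu(A)$ and $\nu(B)$ bear no relation to the global $\psi(A)$ and $\psi(B)$, so failure of the global inequality does not descend to the needle. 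Freezing $\psi(A)$ and $\psi(B)$ turns the problem into one with several equality constraints, which the two-function bisection lemma does not accommodate; and the other reading of your plan --- disintegrate $\psi$ into needles, prove the inequality on each needle with the needle's own values, then integrate --- fails because $t\mapsto t\ln\frac1t+(1-t)\ln\frac1{1-t}$ and $\ln$ are concave, so Jensen's inequality runs the wrong way when you try to reassemble the global right-hand side. A workable localization proof needs a different encoding, for instance replacing the entropy and the logarithm by their tangent lines at the global values $\psi(A)$, $\psi(B)$ (a pointwise larger statement, linear in the needle's measures, which does average correctly), or a Kannan--Lov\'asz--Simonovits four-functions/product formulation with the exponents $\psi(A)$ and $1-\psi(A)$ fixed, combined with an enlargement reformulation of $\psi^+$ (which is a lower Minkowski content, not an integral over $\partial A$). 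None of this appears in the sketch.

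Moreover, even granting a correct reduction, the one-dimensional inequality for log-affine densities on a segment, with the window $B\cap[a,b]$ of length at most $2r$, is the real substance of the theorem; your sketch only asserts it, together with an unjustified extremality claim (that the worst case is a single subinterval abutting an endpoint). As it stands, the proposal names a plausible toolbox but leaves both the reduction and the one-dimensional core open; for a complete argument the natural reference is Bobkov's original proof in \cite{logconc}.
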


	\noindent Convexity of $\Delta_n$ ensures that the normalized Lebesgue measure $\mu$ on it is log-concave.

	In the application of the above theorem to measure $\mu$ on $\Delta_n$ it is possible to set $x_0 = (0, \ldots, 0)$, even though $x_0 \notin \Delta_n$, since when $r > \frac{1}{\sqrt{n}}$ for $x_1 = (\frac{1}{n}, \ldots, \frac{1}{n})$ and $r^{\prime} = \sqrt{r^2 - \frac{1}{n}}$ we shall have
	\[
		\ln	\mu(\{ |x| \leq r \}) = \ln \mu(\{|x - x_1| \leq r^{\prime}\})
	\]
	\[
		\frac{1}{2 r^{\prime}} \geq \frac{1}{2r}
	\]

	One question that arises after examining the lower bound is: how do we estimate $\psi(\{ |x - x_0| \leq r \})$ -- the measure of the ball with center at $x_0$ and radius $r$? In terms of Lemma \ref{expon_into_delta} we have the following result

	\begin{theorem}[{{\cite[Theorem 2.2]{SZ}}}]
		\label{concetration}
		There are absolute positive constants $T, c$ such that for all $t > \frac{T}{\sqrt{n}}$, putting $X = (X_1, \ldots, X_n)$ and $S = X_1 + \ldots + X_n$,
		\[
			\operatorname{Pr}\left(\frac{\Vert X \Vert_2}{S} > t \right) \leq e^{-ctn}	
		\]
	\end{theorem}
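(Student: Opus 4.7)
The plan is to combine the independence property from Lemma \ref{expon_into_delta} with a concentration-of-measure inequality for $\Vert X \Vert_2$. Set $Y_i = |X_i|$, so the $Y_i$ are i.i.d.\ $\mathrm{Exp}(1)$, $\Vert X \Vert_2 = \Vert Y \Vert_2$, and $S = \sum_i Y_i \sim \mathrm{Gamma}(n,1)$; by Lemma \ref{expon_into_delta} the ratio $\Vert X \Vert_2 / S$ is independent of $S$.

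First I would use this independence to eliminate the random denominator. Since $\{\Vert X \Vert_2 / S > t\}$ is measurable with respect to $X/S$, conditioning on $B = \{S \geq n/2\}$ does not change its probability, so
\[
    \operatorname{Pr}\!\left(\frac{\Vert X \Vert_2}{S} > t\right) = \operatorname{Pr}\!\left(\frac{\Vert X \Vert_2}{S} > t \,\Big|\, B\right) \leq \frac{\operatorname{Pr}(\Vert X \Vert_2 > tn/2)}{\operatorname{Pr}(S \geq n/2)}.
\]
A standard Chernoff estimate for sums of $\mathrm{Exp}(1)$ variables gives $\operatorname{Pr}(S < n/2) \leq e^{-c_0 n}$, hence $\operatorname{Pr}(S \geq n/2) \geq 1/2$ once $n$ is large enough (the finitely many small $n$ can be absorbed into the constant $T$). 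It therefore suffices to prove $\operatorname{Pr}(\Vert X \Vert_2 > tn/2) \leq e^{-ctn}$ whenever $t > T/\sqrt{n}$.

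The main step is to treat $\Vert \cdot \Vert_2$ as a $1$-Lipschitz function on $\mathbb{R}^n$ equipped with the product Laplace measure $\bigotimes_i \tfrac{1}{2} e^{-|x|}\,dx$ and to invoke Talagrand's two-level concentration inequality for such product measures, which yields
\[
    \operatorname{Pr}\!\left(\big|\Vert X \Vert_2 - \mathbb{E}\Vert X \Vert_2\big| > r\right) \leq C \exp\!\left(-c_1 \min(r, r^2)\right).
\]
Since $\mathbb{E}\Vert X \Vert_2 \leq \sqrt{\mathbb{E}\Vert X \Vert_2^2} = \sqrt{2n}$, choosing $T$ large enough (say $T \geq 4\sqrt{2}$) guarantees that for every $t > T/\sqrt{n}$ the deviation $r := tn/2 - \sqrt{2n}$ satisfies both $r \geq tn/4$ and $r \geq 1$. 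The bound is then in its \emph{linear} regime and gives $\operatorname{Pr}(\Vert X \Vert_2 > tn/2) \leq C \exp(-c_1 tn/4)$, which when combined with the earlier reduction delivers the theorem with suitable absolute constants.

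The only genuine obstacle is supplying the right concentration inequality: a naive Chernoff argument on the coordinates fails because $\mathbb{E}[e^{\lambda Y_i^2}] = \infty$ for every $\lambda > 0$, so one cannot hope to apply the classical sub-Gaussian/Bernstein machinery to $\sum Y_i^2$ directly. Talagrand's inequality (or equivalently the exponential concentration arising from the Poincar\'e inequality for the Laplace measure, as in Ledoux's monograph) is designed for exactly this sub-exponential regime and yields the linear-exponential decay as soon as the deviation $r$ exceeds $1$, a condition guaranteed precisely by the hypothesis $t > T/\sqrt{n}$.
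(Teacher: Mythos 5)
Your proposal is correct in substance, but note that the paper never proves this statement at all: Theorem \ref{concetration} is imported verbatim from Schechtman--Zinn \cite{SZ} and used as a black box, so the honest comparison is with their original argument rather than with anything in this text. Your first step --- using the independence of $X/S$ from $S$ (Lemma \ref{expon_into_delta}) to replace the random denominator by $n/2$ at the cost of the factor $1/\operatorname{Pr}(S \geq n/2)$ --- is exactly the reduction Schechtman and Zinn make; where you diverge is in how the tail $\operatorname{Pr}(\Vert X \Vert_2 > tn/2)$ is controlled: they obtain the linear-exponential tail by estimates tailored to the exponential density, while you invoke Talagrand's two-level inequality for the product exponential measure (equivalently, the exponential concentration coming from its Poincar\'e inequality). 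That tool is correctly applied: $\Vert \cdot \Vert_2$ is $1$-Lipschitz, $\mathbb{E}\Vert X \Vert_2 \leq \sqrt{2n}$, and with $T \geq 4\sqrt{2}$ the deviation $r = tn/2 - \sqrt{2n}$ indeed satisfies $r \geq \max(tn/4, 1)$, putting you in the linear regime; moreover your diagnosis of why naive Chernoff on $\sum X_i^2$ fails is exactly right, and cruder tricks such as $\Vert X \Vert_2^2 \leq \Vert X \Vert_\infty \Vert X \Vert_1$ also break down in the critical range $t \asymp 1/\sqrt{n}$, so some genuine concentration input is unavoidable --- you identified the crux correctly. Two details you leave implicit but should state: the prefactors (the constant $C$ and the factor $1/\operatorname{Pr}(S \geq n/2) \leq 2$) are absorbed into the exponent using $tn > T\sqrt{n} \geq T$ with $T$ large; and the ``finitely many small $n$'' are handled either by noting that $\operatorname{Pr}(S \geq n/2) \geq \tfrac{1}{2}$ in fact holds for every $n$ (the median of a Gamma$(n,1)$ variable exceeds $n/2$), or by observing $\Vert X \Vert_2 \leq S$, so that once $T \geq \sqrt{n_0}$ the event is empty for those $n$. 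With those two sentences added, your proof is a complete, self-contained substitute for the citation, at the price of leaning on Talagrand's theorem where the original is elementary but more computational.
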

	\noindent Again, because of the symmetry, we can restrict our attention to the positive orthant $\mathbb{R}_+^n$.

	By combining Theorem \ref{concetration} with Lemma \ref{expon_into_delta} we get
	\[
		\mu(\{ |x| \leq r \}) \geq 1 - e^{-cnr}
	\]
	for $r > \frac{T}{\sqrt{n}}$. Now note
	\[
		r > \frac{T}{\sqrt{n}} \Rightarrow cnr \geq c\sqrt{n}T \geq cT \Rightarrow e^{-cnr} \leq e^{-cT},
	\]
	which implies that for $r > \frac{T}{\sqrt{n}}$
	\[
		\ln \mu(\{ |x| \leq r \}) \geq \ln (1 - e^{-cnr}) \geq -C e^{-cnr}
	\]
	for some constant $C > 0$.

	If we assume that $\mu(A) < c^{\prime}$ for some constant $0 < c^{\prime} < 1$, then we will have
	\[
		(1 - \mu(A)) \ln \left(\frac{1}{1 - \mu(A)}\right) \geq C_1 \mu(A)
	\]
	for some constant $C_1 > 0$.

	We would like the sum of the two last terms of
	\begin{equation*}
			\mu(A) \ln \frac{1}{\mu(A)} + (1 - \mu(A))\ln \frac{1}{1 - \mu(A)} \\ + \ln \mu(\{ |x - x_0| \leq r \}) 
	\end{equation*}
	to be non-negative, for this the following will be sufficient
	\[
		C_1\mu(A) \geq C e^{-cnr},
	\]
	which could be rewritten as
	\[
		\ln \frac{C_1\mu(A)}{C} \geq -cnr \Leftrightarrow r \geq \frac{1}{cn} \ln \frac{C}{C_1 \mu(A)} = \frac{1}{cn} \left(\ln \frac{1}{\mu(A)} + \ln \frac{C}{C_1}\right)
	\]
	If we replace $\geq$ above with equality and apply Theorem \ref{log_concave_measure} with $x_0 = 0$, we will get
	\[
		\mu^+(A) \geq \frac{1}{2} cn \mu(A) \frac{\ln \frac{1}{\mu(A)}}{\ln \frac{1}{\mu(A)} + \ln \frac{C}{C_1}}
	\]
	The condition $r > \frac{T}{\sqrt{n}}$ means that we need
	\[
		\ln \frac{1}{\mu(A)} + \ln \frac{C}{C_1} > c T \sqrt{n}
	\]
	to apply Theorem \ref{log_concave_measure} here. And so these last inequalities imply

	\begin{proposition}
		\label{small_sets}
		There are universal constants $c_s, C > 0$ such that \footnote{here for the sake of simplicity we forget about the auxiliary constants $c^{\prime}, C, C_1$ introduced during the proof of this proposition}
		\[
			\mu^+(A) \geq c_s n \mu(A)
		\]
		for all subsets $A \subset \Delta_n$ with $\mu(A) < e^{-C \sqrt{n}}$.
	\end{proposition}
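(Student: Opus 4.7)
The plan is to apply the log-concave isoperimetric inequality (Theorem~\ref{log_concave_measure}) to $\mu$ on $\Delta_n$, control the ball-measure term via the concentration estimate (Theorem~\ref{concetration}) together with Lemma~\ref{expon_into_delta}, and then choose the radius $r$ so that the entropy and ball-measure contributions balance to yield an $\Omega(n\mu(A))$ surface-area lower bound.

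First I would note that $\mu$ is log-concave as the uniform measure on the convex body $\Delta_n$, so Theorem~\ref{log_concave_measure} applies. I want to use $x_0 = 0$, which lies outside $\Delta_n$; however, for every $r > 1/\sqrt{n}$ the set $\{|x| \leq r\} \cap \Delta_n$ equals $\{|x - (1/n,\ldots,1/n)| \leq r'\} \cap \Delta_n$ with $r' = \sqrt{r^2 - 1/n} \leq r$, so weakening $1/(2r')$ to $1/(2r)$ in Theorem~\ref{log_concave_measure} legitimizes the choice $x_0 = 0$. Next, Lemma~\ref{expon_into_delta} identifies $\mu$ with the law of $X/S$, so Theorem~\ref{concetration} gives $\mu(\{|x| > r\}) \leq e^{-cnr}$ for $r > T/\sqrt{n}$, whence $\ln \mu(\{|x| \leq r\}) \geq \ln(1 - e^{-cnr}) \geq -C_0\,e^{-cnr}$ for some universal $C_0 > 0$.

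The second step is to balance the three terms in Theorem~\ref{log_concave_measure}. Since $\mu(A)$ will be tiny, we have $(1-\mu(A))\ln\frac{1}{1-\mu(A)} \geq C_1\,\mu(A)$ for an absolute $C_1 > 0$, so the sum of the last two terms of the inequality is non-negative as soon as $C_1 \mu(A) \geq C_0\,e^{-cnr}$, i.e.\ as soon as
\[
r \;\geq\; \frac{1}{cn}\left(\ln\frac{1}{\mu(A)} + \ln\frac{C_0}{C_1}\right).
\]
Choosing $r$ equal to the right-hand side and substituting into Theorem~\ref{log_concave_measure} yields
\[
\mu^+(A) \;\geq\; \frac{cn}{2}\,\mu(A)\,\frac{\ln\frac{1}{\mu(A)}}{\ln\frac{1}{\mu(A)} + \ln\frac{C_0}{C_1}}.
\]

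Finally I would check the admissibility condition $r > T/\sqrt{n}$ and absorb the fractional factor. The condition reads $\ln\frac{1}{\mu(A)} > cT\sqrt{n} - \ln\frac{C_0}{C_1}$, which is ensured by the hypothesis $\mu(A) < e^{-C\sqrt{n}}$ once $C$ is taken larger than $cT$. Under the same hypothesis we may further enlarge $C$ so that $\ln\frac{1}{\mu(A)} \geq C\sqrt{n}$ dominates the absolute constant $\ln(C_0/C_1)$; then the fraction above is at least $1/2$, and we conclude $\mu^+(A) \geq (c/4)\,n\,\mu(A)$, proving the proposition with $c_s = c/4$. The main subtle step is precisely this admissibility check: it is what forces the restriction to the very small regime $\mu(A) < e^{-C\sqrt{n}}$, rather than allowing one to handle all small sets in a single stroke.
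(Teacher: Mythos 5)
Your proposal is correct and follows essentially the same route as the paper: applying Theorem~\ref{log_concave_measure} with $x_0=0$ (justified via the shift to $(1/n,\ldots,1/n)$), bounding the ball term through Lemma~\ref{expon_into_delta} and Theorem~\ref{concetration}, balancing the last two terms by the choice $r = \frac{1}{cn}\bigl(\ln\frac{1}{\mu(A)} + \ln\frac{C_0}{C_1}\bigr)$, and using the smallness hypothesis $\mu(A) < e^{-C\sqrt{n}}$ to verify $r > T/\sqrt{n}$. Your final step, enlarging $C$ so that the logarithmic fraction is at least $1/2$, makes explicit an absorption that the paper leaves implicit, but the argument is the same.
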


	Now that we have dealt with the case of <<small>> sets, we can proceed to the main argument.

	\begin{definition}
		The gradient modulus $\Vert \nabla f \Vert_2$ of a locally Lipschitz function $f$ is
		\[
			\Vert \nabla f(x) \Vert_2 = \limsup_{\Vert x - y \Vert_2 \to 0^+} \frac{\Vert f(x) - f(y) \Vert_2}{\Vert x - y \Vert_2}
		\]
	\end{definition}

	The next lemma would allow us to switch between the two equivalents of the isoperimetric problem.

	\begin{lemma}[{{\cite[Proposition A]{Sodin2008}}}]
		\label{iso_to_fun}
		Let $\mu$ be a probability measure, $0 < a < \frac{1}{2}$ and $b > 0$. The following are equivalent

			(a) $\mathcal{I}_{\mu}(a) \geq b$

			(b) for any locally Lipschitz function $\phi: \operatorname{supp}\mu \to [0; 1]$ such that $\mu\{\phi = 0\} \geq \frac{1}{2}$ and $\mu\{\phi = 1\} \geq a$,
			\[
				\int \Vert \nabla \phi \Vert_2 d\mu \geq b
			\]
	\end{lemma}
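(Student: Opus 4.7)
The plan is to prove each direction via a standard argument: $(a) \Rightarrow (b)$ through the co-area inequality, and $(b) \Rightarrow (a)$ by approximating the indicator $\mathbf{1}_A$ by Lipschitz cutoffs and then testing hypothesis $(b)$ against them.

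For the direction $(a) \Rightarrow (b)$, I would apply the co-area inequality
\[
	\int \Vert \nabla \phi \Vert_2 \, d\mu \;\geq\; \int_0^1 \mu^+(\{\phi > t\}) \, dt,
\]
which holds for any locally Lipschitz $\phi \colon \operatorname{supp}\mu \to [0, 1]$. The hypotheses on $\phi$ give, for every $t \in (0, 1)$,
\[
	a \;\leq\; \mu\{\phi = 1\} \;\leq\; \mu\{\phi > t\} \;\leq\; 1 - \mu\{\phi = 0\} \;\leq\; \tfrac{1}{2}.
\]
For almost every $t$ the right-most inequality is strict (the possibility $\mu\{\phi > t\} = \tfrac{1}{2}$ on a whole subinterval can be removed by the harmless replacement $\phi \mapsto \max(0, \phi - \eta)/(1 - \eta)$ and a limit $\eta \to 0^+$), so $\{\phi > t\}$ is admissible in the definition of $\mathcal{I}_\mu(a)$, and hypothesis $(a)$ yields $\mu^+(\{\phi > t\}) \geq \mathcal{I}_\mu(a) \geq b$. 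Integrating over $(0, 1)$ produces the required bound.

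For the direction $(b) \Rightarrow (a)$, given a set $A$ with $a \leq \mu(A) < \tfrac{1}{2}$ (WLOG with $\mu(\overline{A}) = \mu(A)$, since otherwise $\mu^+(A) = +\infty$ and the conclusion is vacuous), I would test $(b)$ against the Lipschitz cutoff
\[
	\phi_\varepsilon(x) \;=\; \max\!\left(0,\; 1 - \frac{d(x, A)}{\varepsilon}\right),
\]
which equals $1$ on $\overline{A}$ and vanishes outside $A_\varepsilon$. Since $\mu(A_\varepsilon) \to \mu(A) < \tfrac{1}{2}$ as $\varepsilon \to 0^+$, for all sufficiently small $\varepsilon$ we have $\mu\{\phi_\varepsilon = 1\} \geq \mu(A) \geq a$ and $\mu\{\phi_\varepsilon = 0\} \geq 1 - \mu(A_\varepsilon) \geq \tfrac{1}{2}$, so $\phi_\varepsilon$ is admissible in $(b)$. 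Its gradient modulus is bounded pointwise by $\tfrac{1}{\varepsilon}\mathbf{1}_{A_\varepsilon \setminus A}$, hence
\[
	b \;\leq\; \int \Vert \nabla \phi_\varepsilon \Vert_2 \, d\mu \;\leq\; \frac{\mu(A_\varepsilon) - \mu(A)}{\varepsilon}.
\]
Sending $\varepsilon \to 0^+$ gives $\mu^+(A) \geq b$, and taking the infimum over admissible $A$ yields $\mathcal{I}_\mu(a) \geq b$.

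The substantive obstacle is ensuring the co-area inequality at the required level of generality — locally Lipschitz functions on $\operatorname{supp}\mu$ (possibly a proper subset of $\mathbb{R}^n$) paired with the Minkowski-type surface measure $\mu^+$ — but this is a classical fact in geometric measure theory. The remaining boundary issues (level sets of measure exactly $\tfrac{1}{2}$ on an interval of $t$ in the first direction, sets $A$ with $\mu(\partial A) > 0$ in the second) are cosmetic and are fully handled by the small perturbations indicated above.
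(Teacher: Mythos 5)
The paper does not prove this lemma at all -- it is quoted from \cite[Proposition A]{Sodin2008} (with Remark \ref{local_lipschitz} commenting on the proof given there), and your argument is essentially the standard Bobkov--Houdr\'e-style proof used in that source: co-area inequality for $(a)\Rightarrow(b)$, and the test function $\phi_\varepsilon=\max(0,1-d(x,A)/\varepsilon)$ for $(b)\Rightarrow(a)$. So the strategy is the right one. However, two specific steps are not correct as written. First, the pointwise bound $\Vert\nabla\phi_\varepsilon\Vert_2\leq\frac{1}{\varepsilon}\mathbf{1}_{A_\varepsilon\setminus A}$ is false: at a point $x\in A\cap\partial A$ one has $\Vert\nabla\phi_\varepsilon(x)\Vert_2=\frac{1}{\varepsilon}\limsup_{y\to x}\frac{\min(d(y,A),\varepsilon)}{\Vert y-x\Vert_2}$, which can be as large as $\frac{1}{\varepsilon}$ even though $\mathbf{1}_{A_\varepsilon\setminus A}(x)=0$; reducing to $\mu(\overline{A})=\mu(A)$ does not kill this, since $\mu(A\cap\partial A)$ can be positive (think of a fat-Cantor-type set). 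The standard repair is to show $\Vert\nabla\phi_\varepsilon\Vert_2=0$ at every $\mu$-density point of $A$ (if $d(y_k,A)\geq c\Vert y_k-x\Vert_2$ along a sequence, the balls $B(y_k,c\Vert y_k-x\Vert_2)$ miss $A$ and force the density at $x$ below $1$), which disposes of the bad set up to $\mu$-measure zero when $\mu$ is absolutely continuous with locally comparable density -- as it is in every application in this paper -- but this argument, or some substitute, has to be made; it is not a cosmetic point, and it also shows the inequality you wrote cannot be literally pointwise.

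Second, your fix for the boundary case $\mu\{\phi>t\}=\frac{1}{2}$ does not do what you claim. If $\mu\{\phi>t\}=\frac{1}{2}$ on an interval $(0,t_0]$, then replacing $\phi$ by $\max(0,\phi-\eta)/(1-\eta)$ merely reparametrizes the level sets ($\{\phi_\eta>s\}=\{\phi>\eta+s(1-\eta)\}$), so for $\eta<t_0$ the plateau at $\frac{1}{2}$ persists, and for $\eta\geq t_0$ the limit $\eta\to0^+$ is unavailable and you only recover $(1-t_0)b$. In fact, in the stated generality (an arbitrary probability measure) the equivalence itself can fail at exactly this kind of degeneracy: for $\mu=\frac{1}{2}\delta_{x_0}+\frac{1}{2}\delta_{x_1}$ there are no sets with $a\leq\mu(A)<\frac{1}{2}$, so $(a)$ holds vacuously, while the function with $\phi(x_0)=0$, $\phi(x_1)=1$ violates $(b)$. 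The honest statement is that one works under the implicit regularity of the cited source (here: $\mu$ with connected support and positive density, as for the simplex and $\ell_p$ balls), in which case the plateau cannot occur at all -- $\mu\{0<\phi<t\}=0$ would force $\{0<\phi<t\}$ to be empty and disconnect $\operatorname{supp}\mu$ -- and no perturbation is needed. With these two points repaired, your proof is the standard one; the co-area inequality you invoke for $(a)\Rightarrow(b)$ is indeed available at the needed generality (Bobkov--Houdr\'e).
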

	\begin{remark}
		\label{local_lipschitz}
		In this theorem and its applications in this article we can replace <<locally Lipschitz>> in $(b)$ with just <<Lipschitz>>, since it would not affect the implication $(a) \Rightarrow (b)$, and in the proof(\cite{Sodin2008}) of the implication $(b) \Rightarrow (a)$ only Lipschitz functions $\phi\colon \operatorname{supp}\mu \to [0; 1]$ were considered.
	\end{remark}

	The idea of <<getting rid>> of unwanted parts of our space would be realized through the so-called cut-off functions. A cut-off function maps our space to $[0; 1]$, where $0$ corresponds to the regions we are getting rid of, $1$ -- to the regions we want to keep, and we would also need our function to take values in between $0$ and $1$ to ensure continuity.

	The following lemma shows how a cut-off function $h$ can be applied.

	\begin{lemma}[{{\cite[Lemma 2]{Sodin2008}}}]
		\label{apply_cutoff}
		If $k, h\colon \mathbb{R}^n \to [0; 1]$ are two locally Lipschitz functions, then
		\[
			\Vert \nabla k \Vert_2 \geq \Vert \nabla(kh) \Vert_2 - \Vert \nabla h \Vert_2
		\]
	\end{lemma}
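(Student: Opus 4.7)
The plan is to reduce this to a pointwise inequality about difference quotients, then pass to the limsup. Since both $k$ and $h$ take values in $[0;1]$, the key algebraic observation is that for any pair of points $x, y$,
\[
    k(x)h(x) - k(y)h(y) = k(x)\bigl(h(x) - h(y)\bigr) + h(y)\bigl(k(x) - k(y)\bigr),
\]
so that by the triangle inequality and the bounds $|k(x)|, |h(y)| \leq 1$,
\[
    |k(x)h(x) - k(y)h(y)| \leq |h(x) - h(y)| + |k(x) - k(y)|.
\]

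Next I would divide both sides by $\Vert x - y \Vert_2$ and take the limsup as $y \to x$. The gradient modulus $\Vert \nabla \cdot \Vert_2$ is defined precisely as such a limsup, and limsup is subadditive, so we obtain the pointwise inequality
\[
    \Vert \nabla (kh)(x) \Vert_2 \leq \Vert \nabla h(x) \Vert_2 + \Vert \nabla k(x) \Vert_2.
\]
Rearranging yields the desired bound $\Vert \nabla k \Vert_2 \geq \Vert \nabla (kh) \Vert_2 - \Vert \nabla h \Vert_2$.

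There is no real obstacle here, but one should be mindful that locally Lipschitz functions need not be differentiable everywhere, so it is important to work with the limsup definition rather than with a classical product rule. The bound $k, h \in [0;1]$ is exactly what is needed to absorb the coefficients $k(x)$ and $h(y)$ in the factored expression, which is why the statement is posed for functions valued in $[0;1]$ rather than for arbitrary locally Lipschitz functions.
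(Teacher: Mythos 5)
Your proof is correct: the factorization $k(x)h(x)-k(y)h(y)=k(x)\bigl(h(x)-h(y)\bigr)+h(y)\bigl(k(x)-k(y)\bigr)$, the bounds $|k|,|h|\leq 1$, and subadditivity of the limsup give the pointwise product bound $\Vert \nabla(kh)\Vert_2 \leq \Vert \nabla k\Vert_2 + \Vert \nabla h\Vert_2$, which rearranges to the claim since local Lipschitzness keeps all quantities finite. The paper itself only cites \cite[Lemma 2]{Sodin2008} without reproducing a proof, and your argument is essentially the same elementary product-rule estimate used there, so there is nothing to add.
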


	\noindent By Lemma \ref{iso_to_fun} we can speak about the isoperimetric problem in terms of the integral of gradient modulus $\Vert \nabla \phi \Vert_2$. Lemma \ref{apply_cutoff} would allow us to pass from $\phi$ to function $\phi \cdot h$, which vanishes on the unwanted region of our space, at the cost of an error term $\Vert \nabla h \Vert_2$

	\[
		\int \Vert \nabla \phi \Vert_2 d\mu \geq \int \Vert \nabla (\phi \cdot h) \Vert_2 d\mu - \int \Vert \nabla h \Vert_2 d\mu
	\]

	Recall that our upper bound on gradient modulus of mapping $T: \mathbb{R}_+^n \to \Delta_n$ is
	\[
		\Vert \nabla T \Vert_2 = \left\Vert \frac{\partial T_j(x)}{\partial x_i} \right\Vert_2 \leq \frac{1}{\Vert x \Vert_1}(1 + \sqrt{n}\Vert T(x) \Vert_2)
	\]
	So we are going to need two cut-off functions: one for parts of $\Delta_n$ that are too far from the origin will be of the form
	\[
		h_1\colon \mathbb{R}^n \to [0; 1], \quad h_1(x) = \max(0, \min(1, 2 - c_1 \sqrt{n} \Vert x \Vert_2)),
	\]
	and will take care of large values of $\Vert T(x) \Vert_2$; another for the region of $\mathbb{R}^n$ with low $\Vert x \Vert_1$ will be of the form
	\[
		h_2\colon \mathbb{R}^n \to [0; 1], \quad h_2(x) = \max(0, \min(1, c_2 n^{-1} \Vert x \Vert_1 - 1)).
	\]
	Constants $c_1$ and $c_2$ will be chosen later. Note that the two cut-off functions are meant for different domains: $h_1$ for $\Delta_n$ and $h_2$ for $\mathbb{R}_+^n$, but since both $\mathbb{R}_+^n$ and $\Delta_n$ lie inside $\mathbb{R}^n$ we choose $\mathbb{R}^n$ as their domain of definition.

	To employ Lemma \ref{apply_cutoff} in our argument we are going to need some estimates related to the arising error terms. In the following lemma some properties of our cut-off functions and their gradients will be established. 

	\begin{lemma}[corresponds to Lemma 3 from \cite{Sodin2008}]
		\label{cutoff_props}
		The cut-off function $h_1$ has the following properties
		\begin{equation}
			\label{h_1_1}
			h_1(x) = 1 \Leftrightarrow \Vert x \Vert_2 \leq \frac{1}{c_1 \sqrt{n}}
		\end{equation}
		\begin{equation}
			\label{h_1_2}
			h_1(x) = 0 \Leftrightarrow \Vert x \Vert_2 \geq \frac{2}{c_1 \sqrt{n}}
		\end{equation}
		\begin{equation}
			\label{h_1_grad}
			\Vert \nabla h_1 \Vert_2 \leq c_1 \sqrt{n}
		\end{equation}
		The cut-off function $h_2$ has the following properties
		\begin{equation}
			\label{h_2_1}		
			h_2(x) = 1 \Leftrightarrow \Vert x \Vert_1 \geq \frac{2}{c_2}n
		\end{equation}
		\begin{equation}
			\label{h_2_2}
			h_2(x) = 0 \Leftrightarrow \Vert x \Vert_1 \leq \frac{n}{c_2}
		\end{equation}
		\begin{equation}
			\label{h_2_grad}
			\Vert \nabla h_2 \Vert_2 \leq \frac{c_2}{\sqrt{n}}
		\end{equation}
	\end{lemma}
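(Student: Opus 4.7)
The plan is to verify each of the six claims by direct inspection of the piecewise-affine definitions of $h_1$ and $h_2$. The proof splits cleanly into two parts: the threshold equivalences, which amount to solving inequalities, and the gradient bounds, which follow from the chain rule combined with the standard Lipschitz constants of $\|\cdot\|_2$ and $\|\cdot\|_1$ with respect to the Euclidean metric.

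First I would establish the equivalences (\ref{h_1_1}), (\ref{h_1_2}), (\ref{h_2_1}), (\ref{h_2_2}). Unclamping $h_1 = \max(0, \min(1, 2 - c_1\sqrt{n}\|x\|_2))$, the value $h_1(x) = 1$ is equivalent to $2 - c_1\sqrt{n}\|x\|_2 \geq 1$, i.e. $\|x\|_2 \leq 1/(c_1\sqrt{n})$; and $h_1(x) = 0$ is equivalent to $2 - c_1\sqrt{n}\|x\|_2 \leq 0$, i.e. $\|x\|_2 \geq 2/(c_1\sqrt{n})$. The analogous manipulations for $h_2(x) = \max(0, \min(1, c_2 n^{-1}\|x\|_1 - 1))$ give $h_2(x) = 1 \Leftrightarrow \|x\|_1 \geq 2n/c_2$ and $h_2(x) = 0 \Leftrightarrow \|x\|_1 \leq n/c_2$.

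For the gradient bounds (\ref{h_1_grad}) and (\ref{h_2_grad}), I would observe that each $h_i$ is constant (hence has vanishing gradient modulus) outside the slab on which the inner affine expression lies in $[0,1]$, and equals that affine expression on the slab. On the slab for $h_1$, the chain rule gives $\nabla h_1 = -c_1\sqrt{n}\,\tfrac{x}{\|x\|_2}$ wherever $\|x\|_2 \neq 0$, which has Euclidean norm exactly $c_1\sqrt{n}$; note that the origin does not lie in the slab $1/(c_1\sqrt{n}) \leq \|x\|_2 \leq 2/(c_1\sqrt{n})$, so this is the full picture. For $h_2$, the map $x \mapsto \|x\|_1 = \sum_i |x_i|$ has gradient $(\mathrm{sgn}(x_1), \ldots, \mathrm{sgn}(x_n))$ at points where no coordinate vanishes, whose Euclidean norm is $\sqrt{n}$; hence on the slab the chain rule gives $\|\nabla h_2\|_2 = c_2 n^{-1}\sqrt{n} = c_2/\sqrt{n}$.

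The only mildly subtle points are those of non-differentiability: the two corners of each piecewise-affine profile, and the set where some coordinate of $x$ vanishes in the case of $h_2$. Here I would appeal to the limsup definition of the gradient modulus from the previous subsection: since $h_1$ and $h_2$ are globally Lipschitz with constants $c_1\sqrt{n}$ and $c_2/\sqrt{n}$ respectively (the former because $\|\cdot\|_2$ is $1$-Lipschitz with respect to the Euclidean metric, the latter because $\|\cdot\|_1$ is $\sqrt{n}$-Lipschitz with respect to it by Cauchy--Schwarz), the limsup at any point is bounded by the global Lipschitz constant. This absorbs the corners uniformly and completes the estimates. I do not expect any genuine obstacle here; the lemma is essentially a bookkeeping step that records the quantitative features of the cut-offs for later use.
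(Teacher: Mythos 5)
Your proposal is correct and follows essentially the same route as the paper: the threshold equivalences are read off directly from the clamped affine definitions, and the gradient bounds come from the Lipschitz constants $1$ and $\sqrt{n}$ of $\Vert \cdot \Vert_2$ and $\Vert \cdot \Vert_1$ (the paper bounds the gradient moduli of the norms by the triangle inequality and $\Vert \Delta x \Vert_1 \leq \sqrt{n}\Vert \Delta x \Vert_2$, then multiplies by the coefficients $c_1\sqrt{n}$ and $c_2 n^{-1}$). Your extra chain-rule computation on the slabs and the remark that the limsup definition of the gradient modulus absorbs the corner points are harmless refinements of the same argument.
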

	\begin{proof}
		Properties (\ref{h_1_1}), (\ref{h_1_2}), (\ref{h_2_1}), (\ref{h_2_2}) immediately follow from the definition of our cut-off functions.

		By the triangle inequality we note that the gradient modulus of $\Vert x \Vert_2$ considered as a function from $\mathbb{R}_+^n$ to $\mathbb{R}_+$ is not greater than $1$, thus
		\[
			\Vert \nabla h_1 \Vert_2 \leq c_1 \sqrt{n}
		\]

		Inequalities
		\[
			\Vert x + \Delta x \Vert_1 \leq \Vert x \Vert_1 + \Vert \Delta x \Vert_1,
		\]
		\[
			\Vert \Delta x \Vert_1 \leq \sqrt{n} \Vert \Delta x \Vert_2
		\]
		imply that the gradient modulus of $\Vert x \Vert_1$ is not greater than $\sqrt{n}$, from which we derive
		\[
			\Vert \nabla h_2 \Vert_2 \leq \frac{c_2}{\sqrt{n}}
		\]
	\end{proof}

	\begin{lemma}[corresponds to Lemma 4 from \cite{Sodin2008}]
		\label{cutoff_integral}
		For $\alpha \geq 0$ we have
		\[
			\nu^n \{ \Vert x \Vert_1 \leq \alpha n \} \leq \frac{1}{\sqrt{2\pi n}} (\alpha e)^n
		\]
		And for every $\alpha > T$ by Lemma \ref{concetration} we have
		\[
			\mu \left\{ \Vert x \Vert_2 \geq \frac{\alpha}{\sqrt{n}} \right\} \leq e^{- \alpha c \sqrt{n}}
		\]
	\end{lemma}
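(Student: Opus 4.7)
The plan is to handle the two inequalities separately, since they concern different measures living on different spaces, even though the exponential measure $\nu^n$ on $\mathbb{R}_+^n$ is related to the Lebesgue measure $\mu$ on $\Delta_n$ via Lemma \ref{expon_into_delta}.

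For the first inequality, I would exploit the fact that, under $\nu^n$, the random variable $\Vert x \Vert_1 = x_1 + \ldots + x_n$ is a sum of $n$ independent Exp$(1)$ variables, hence has Gamma$(n, 1)$ distribution with density $s^{n-1}e^{-s}/(n-1)!$ on $\mathbb{R}_+$. So I would write
\[
\nu^n\{\Vert x \Vert_1 \leq \alpha n\} = \int_0^{\alpha n} \frac{s^{n-1} e^{-s}}{(n-1)!}\, ds.
\]
The key estimate is then the trivial bound $e^{-s} \leq 1$, which reduces the integral to $(\alpha n)^n / n!$, followed by Stirling's inequality $n! \geq \sqrt{2\pi n}\, (n/e)^n$. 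Combining these gives exactly $\frac{1}{\sqrt{2\pi n}}(\alpha e)^n$.

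For the second inequality I would use the transfer of measure supplied by Lemma \ref{expon_into_delta}: restricting $X = (X_1,\ldots,X_n)$ with density $\tfrac12 e^{-|t|}$ coordinate-wise to the positive orthant gives the measure $\nu_n$, and $X/S$ with $S = \Vert X \Vert_1$ induces the normalized Lebesgue measure $\mu$ on $\Delta_n$. Consequently
\[
\mu\!\left\{\Vert x \Vert_2 \geq \frac{\alpha}{\sqrt{n}}\right\} = \operatorname{Pr}\!\left(\frac{\Vert X \Vert_2}{S} \geq \frac{\alpha}{\sqrt{n}}\right).
\]
For $\alpha > T$, the condition $t = \alpha/\sqrt{n} > T/\sqrt{n}$ required by Theorem \ref{concetration} is satisfied, so applying that theorem directly yields the bound $e^{-ctn} = e^{-c\alpha\sqrt{n}}$.

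Neither step is conceptually difficult: the first is a Chernoff/Stirling-type deterministic estimate, and the second is essentially a citation once the measure transfer is recognized. The only mild subtlety is making sure to invoke Lemma \ref{expon_into_delta} in the positive-orthant form (which is legitimate by the orthant-symmetry remark following that lemma), so that the probabilistic statement of Theorem \ref{concetration} can be read as a statement about $\mu$ on $\Delta_n$.
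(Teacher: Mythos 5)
Your proposal is correct and follows essentially the same route as the paper: dropping the exponential factor to arrive at $\frac{(\alpha n)^n}{n!}$ (you via the Gamma$(n,1)$ density of $\Vert x \Vert_1$ and $e^{-s}\leq 1$, the paper via bounding the density of $\nu^n$ by $1$ and taking the Lebesgue volume of the corner simplex — the same computation) and then applying Stirling's bound $n! \geq \sqrt{2\pi n}\,(n/e)^n$. The second inequality is likewise handled identically, by identifying $\mu$ on $\Delta_n$ with the law of $X/S$ through Lemma \ref{expon_into_delta} restricted to the positive orthant and citing Theorem \ref{concetration} with $t = \alpha/\sqrt{n} > T/\sqrt{n}$.
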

	\begin{proof}
		Since the density of $\nu^n$ everywhere in $\mathbb{R}_+^n$ is not greater than $1$, we can bound $\nu^n \{ \Vert x \Vert_1 \leq \alpha n \}$ above by the volume of the region of $\mathbb{R}_+^n$ defined by $\Vert x \Vert_1 \leq \alpha n$, which is equal to
		\[
			\frac{1}{n!} (\alpha n)^n
		\]
		By Stirling's approximation
		\[
			n! \geq \sqrt{2\pi n} \left(\frac{n}{e}\right)^n e^{\frac{1}{12n + 1}}
		\]
		And so we arrive at
		\[
			\nu^n \{ \Vert x \Vert_1 \leq \alpha n \} \leq \frac{1}{n!} (\alpha n)^n \leq \frac{1}{\sqrt{2\pi n}} \left( \frac{e}{n} \right)^n (\alpha n)^n e^{-\frac{1}{12n + 1}} \leq \frac{1}{\sqrt{2\pi n}}(\alpha e)^n
		\]

		Note that $\frac{\alpha}{\sqrt{n}} > \frac{T}{\sqrt{n}}$, so by Theorem \ref{concetration}
		\[
			\mu \left\{ \Vert x \Vert_2 \geq \frac{\alpha}{\sqrt{n}} \right\} \leq e^{-c \frac{\alpha}{\sqrt{n}} n} = e^{-\alpha c \sqrt{n}}
		\] 
	\end{proof}

	Now we are ready to present the main argument.

	\begin{proposition}[corresponds to Proposition 2 from \cite{Sodin2008}]
		\label{big_sets}
		There is a universal constant $c_{b} > 0$ such that for all $e^{-C\sqrt{n}} \leq t < \frac{1}{2}$
		\[
			\mathcal{I}_{\mu}(t) \geq c_b n t.
		\]
	\end{proposition}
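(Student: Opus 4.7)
My plan is to reduce the statement to an integral gradient inequality via Lemma \ref{iso_to_fun} and to transfer the problem from $\Delta_n$ to $\mathbb{R}_+^n$ through the mapping $T$, where the lower bound (\ref{nu_isoperimetric}) on $I_{\nu_n}$ is available. By Lemma \ref{iso_to_fun} and Remark \ref{local_lipschitz}, it is enough to show that for every Lipschitz $\phi\colon \Delta_n \to [0; 1]$ with $\mu\{\phi = 0\} \geq \tfrac{1}{2}$ and $\mu\{\phi = 1\} \geq t$ one has $\int_{\Delta_n} \Vert \nabla \phi \Vert_2\, d\mu \geq c_b n t$.

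Fix universal constants $c_1, c_2 > 0$ to be chosen at the end. Pull $\phi$ back and truncate by both cut-offs by setting
\[
	k(x) = \phi(T(x)) \cdot h_1(T(x)) \cdot h_2(x),
\]
so that $k$ vanishes outside the region $G = \{h_1(T(x)) \cdot h_2(x) > 0\}$. On $G$ the defining inequalities of $h_1, h_2$ give $\sqrt{n}\,\Vert T(x) \Vert_2 < 2/c_1$ and $1/\Vert x \Vert_1 < c_2/n$, so Lemma \ref{lipschitz_constant} provides the uniform contraction $\Vert \nabla T(x) \Vert_2 \leq \tfrac{c_2}{n}\bigl(1 + \tfrac{2}{c_1}\bigr)$. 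Since $T$ pushes $\nu_n$ forward to $\mu$, Lemma \ref{cutoff_integral} yields the tail estimates $\nu_n\{\Vert x \Vert_1 \leq \tfrac{2n}{c_2}\} \leq \tfrac{(2e/c_2)^n}{\sqrt{2\pi n}}$ and $\nu_n\{\Vert T(x) \Vert_2 \geq \tfrac{1}{c_1 \sqrt{n}}\} \leq e^{-c \sqrt{n}/c_1}$, provided $c_2$ is large (say $c_2 = 4e$) and $c_1$ is less than the reciprocal of the constant $T$ of Theorem \ref{concetration}. Choosing the hypothesis constant $C$ small enough that $e^{-C\sqrt{n}}$ dominates both tails, one concludes $\nu_n\{k = 0\} \geq \tfrac{1}{2}$ and $\nu_n\{k = 1\} \geq t/2$. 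Applying Lemma \ref{iso_to_fun} in the direction $(a) \Rightarrow (b)$ to the measure $\nu_n$ together with (\ref{nu_isoperimetric}) then gives $\int \Vert \nabla k \Vert_2\, d\nu_n \geq \mathcal{I}_{\nu_n}(t/2) \geq \tfrac{t}{4\sqrt{6}}$.

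It remains to bound the same integral from above in terms of $\int_{\Delta_n} \Vert \nabla \phi \Vert_2\, d\mu$. Expanding by the product rule,
\[
	\Vert \nabla k \Vert_2 \leq h_1(T)\, h_2\, \Vert \nabla(\phi \circ T) \Vert_2 + h_2\, \Vert \nabla(h_1 \circ T) \Vert_2 + h_1(T)\, \Vert \nabla h_2 \Vert_2.
\]
For the first term, the chain rule together with the contraction estimate above gives $\Vert \nabla(\phi \circ T)(x) \Vert_2 \leq \tfrac{c_2(1 + 2/c_1)}{n}\,\Vert \nabla \phi(T(x)) \Vert_2$ on $G$, and since $T$ pushes $\nu_n$ to $\mu$ this contribution is at most $\tfrac{c_2(1 + 2/c_1)}{n} \int_{\Delta_n} \Vert \nabla \phi \Vert_2\, d\mu$. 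For the two error terms, Lemma \ref{cutoff_props} (combined on the first error with the chain rule and Lemma \ref{lipschitz_constant}) gives moduli of order $\tfrac{c_2(c_1 + 2)}{\sqrt{n}}$ and $\tfrac{c_2}{\sqrt{n}}$ respectively, while $\nabla(h_1 \circ T)$ and $\nabla h_2$ are supported on precisely the thin shells whose $\nu_n$-measures were just bounded exponentially in $\sqrt{n}$, so both error integrals are at most $O(e^{-c' \sqrt{n}})$ for a universal $c' > 0$.

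Combining everything,
\[
	\tfrac{t}{4\sqrt{6}} \leq \tfrac{c_2(1 + 2/c_1)}{n} \int_{\Delta_n} \Vert \nabla \phi \Vert_2\, d\mu + O\bigl(e^{-c' \sqrt{n}}\bigr).
\]
Fixing the universal constants $c_1, c_2$ first and then $C$ small enough that the error on $\{t \geq e^{-C\sqrt{n}}\}$ is absorbed into $\tfrac{t}{8\sqrt{6}}$, rearrangement delivers the desired bound with $c_b = \tfrac{1}{8\sqrt{6}\, c_2(1 + 2/c_1)}$. The subtle part is the simultaneous calibration of $c_1, c_2, C$ against the constants $c, T$ of Theorem \ref{concetration}: we must keep the good region of $\nu_n$-measure close to $1$, keep the contraction of $T$ on it of order $1/n$, and ensure the cut-off errors decay faster than $t/n$. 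Once these are reconciled, the factor $n$ in the final bound is the direct manifestation of the $O(1/n)$ contraction of $T$ on $G$ given by Lemma \ref{lipschitz_constant}.
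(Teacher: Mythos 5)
Your proposal follows essentially the same route as the paper: the same two cut-offs $h_1, h_2$, the same transfer along $T$ using the bound of Lemma \ref{lipschitz_constant}, the tail estimates of Lemma \ref{cutoff_integral}, and Lemma \ref{iso_to_fun} together with (\ref{nu_isoperimetric}) applied to the truncated pullback. The only organisational difference is that you expand $\Vert \nabla k \Vert_2$ for the fully truncated $k = ((\phi h_1)\circ T)\, h_2$ by the product rule and bound the resulting integral from above, whereas the paper peels the cut-offs off one at a time via Lemma \ref{apply_cutoff}, keeping the weight $\Vert x \Vert_1$ explicit in an intermediate integral; the two bookkeepings are equivalent, and your pointwise bound $\Vert \nabla T \Vert_2 \leq \tfrac{c_2}{n}\bigl(1 + \tfrac{2}{c_1}\bigr)$ on the good region is exactly what the paper's steps (\ref{step5})--(\ref{step9}) extract.

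One genuine, though easily repaired, slip: you fix $c_1, c_2$ first and then shrink the constant $C$, but $C$ is not at your disposal -- it is the constant produced by Proposition \ref{small_sets}, and Theorem \ref{lambda_iso} needs the present proposition for that same $C$; proving the bound only for some smaller $C^{\prime}$ leaves the range $e^{-C\sqrt{n}} \leq t < e^{-C^{\prime}\sqrt{n}}$ covered by neither proposition. The fix is the paper's quantifier order: with $C$ given, first take $c_2$ large enough and then $c_1$ small enough (keeping $c_1$ below the reciprocal of the constant $T$ of Theorem \ref{concetration}) so that both tails are at most a fixed fraction of $e^{-C\sqrt{n}}$ for all $n$; your own estimates $e^{-c\sqrt{n}/c_1}$ and $(2e/c_2)^n/\sqrt{2\pi n}$ permit this, after which the rest of your argument goes through verbatim.
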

	\begin{proof}
		Pick $e^{-C \sqrt{n}} \leq a < \frac{1}{2}$. According to Lemma \ref{iso_to_fun} and Remark \ref{local_lipschitz} the problem of finding lower bounds on $\mathcal{I}_{\mu}(a)$ is equivalent to the estimation of
		\[
			\int_{\Delta_n} \Vert \nabla f \Vert_2 d\mu
		\]
		for a Lipschitz function $f\colon \Delta_n \to [0; 1]$ such that 
		\begin{equation}
			\label{fun_assumption}
			\mu\{ f = 0 \} \geq \frac{1}{2} \textrm{ and } \mu\{ f = 1 \} \geq a
		\end{equation}

		To <<get rid>> of the parts of $\Delta_n$ that are too far from the origin we can use our cut-off function $h_1$ and by Lemma \ref{apply_cutoff} we will get
		\begin{equation}
			\label{step1}
			\int_{\Delta_n} \Vert \nabla f \Vert_2 d\mu \geq \int_{\Delta_n} \Vert \nabla(fh_1) \Vert_2 d\mu - \int_{\Delta_n} \Vert \nabla h_1 \Vert_2 d\mu
		\end{equation}
		Here by Lemma \ref{cutoff_props} we can estimate the error term as
		\begin{equation}
			\label{step2}
			\int_{\Delta_n} \Vert \nabla h_1 \Vert_2 d\mu \leq c_1 \sqrt{n} \, \mu\left\{ \Vert x \Vert_2 \geq \frac{1}{c_1 \sqrt{n}} \right\}
		\end{equation}

		Mapping $T\colon \mathbb{R}_+^n \to \Delta_n$ transforms measure $\nu^n$ into $\mu$, which allows us to replace integrals over $\Delta_n$ with integrals over $\mathbb{R}_+^n$ as follows
		\[
			\int_{\Delta_n} w\, d\mu = \int_{\mathbb{R}_+^n} (w \circ T) d\nu^n
		\]
		Denote $(f h_1) \circ T$ by $g$. As we already noted
		\begin{equation}
			\label{step3}
			\int_{\Delta_n} \Vert \nabla (f h_1) \Vert_2 d\mu  = \int_{\mathbb{R}^{n}_{+}} \Vert \nabla (f h_1) \circ T \Vert_2 d\nu^n
		\end{equation}
		
		An observation similar to the chain rule of differentiation could be made
		\[
			\Vert \nabla (a \circ b) \Vert_2 \leq \Vert (\nabla a) \circ b \Vert_2 \cdot \Vert \nabla b \Vert_2,
		\]
		which in our case would mean that
		\begin{equation}
			\label{step4}
			\int_{\mathbb{R}^{n}_{+}} \Vert \nabla (f h_1) \circ T \Vert_2 d\nu^n \geq \int_{\mathbb{R}^n_{+}} \frac{\Vert \nabla g \Vert_2}{\Vert \nabla T \Vert_2} d \nu^n,
		\end{equation}
		and since $\Vert \nabla T \Vert_2 \neq 0$ by Lemma \ref{lipschitz_constant} could be bounded above by $\frac{1}{\Vert x \Vert_1}(1 + \sqrt{n}\Vert T(x) \Vert_2)$ we have
		\begin{equation}
			\label{step5}
			\int_{\mathbb{R}^n_{+}} \frac{\Vert \nabla g \Vert_2}{\Vert \nabla T \Vert_2} d \nu^n \geq \int_{\mathbb{R}^n_{+}} \frac{\Vert \nabla g \Vert_2 \Vert x \Vert_1}{1 + \sqrt{n}\Vert T(x) \Vert_2} d \nu^n
		\end{equation}
		But $h_1$ is zero when $\Vert x \Vert_2 \geq \frac{2}{c_1 \sqrt{n}}$ by Lemma \ref{cutoff_props}. So the gradient modulus $\Vert \nabla g \Vert_2$ is equal to zero when $\Vert T(x) \Vert_2 > \frac{2}{c_1 \sqrt{n}}$. And because of this,
		\begin{multline}
			\label{step6}
			\int_{\mathbb{R}^n_{+}} \frac{\Vert \nabla g \Vert_2 \Vert x \Vert_1}{1 + \sqrt{n}\Vert T(x) \Vert_2} d \nu^n \geq \int_{\mathbb{R}^n_{+}} \frac{\Vert \nabla g \Vert_2 \Vert x \Vert_1}{1 + \sqrt{n} \frac{2}{c_1 \sqrt{n}}} d \nu^n \\ = \frac{1}{1 + \frac{2}{c_1}}\int_{\mathbb{R}^n_{+}} \Vert \nabla g \Vert_2 \Vert x \Vert_1 d\nu^n
		\end{multline}

		Now to <<get rid>> of the region of $\mathbb{R}_+^n$ where $\Vert x \Vert_1$ is too small we will apply our cut-off function $h_2$ and by Lemma \ref{apply_cutoff} get
		\begin{equation}
			\label{step7}
			\int_{\mathbb{R}^n_{+}} \Vert \nabla g \Vert_2 \Vert x \Vert_1 d\nu^n \geq \int_{\mathbb{R}^n_{+}} \Vert \nabla (g h_2) \Vert_2 \Vert x \Vert_1 d\nu^n - \int_{\mathbb{R}^n_{+}} \Vert \nabla h_2 \Vert_2 \Vert x \Vert_1 d\nu^n
		\end{equation}
		By Lemma \ref{cutoff_props} we have the following upper bound on the error term
		\begin{multline}
			\label{step8}
			\int_{\mathbb{R}^n_{+}} \Vert \nabla h_2 \Vert_2 \Vert x \Vert_1 d\nu^n \leq \frac{c_2}{\sqrt{n}} \frac{2n}{c_2} \nu^n \left\{ \Vert x \Vert_1 \leq \frac{2n}{c_2} \right\} \\ = 2 \sqrt{n} \nu^n \left\{ \Vert x \Vert_1 \leq \frac{2n}{c_2} \right\}
		\end{multline}
		Cut-off function $h_2$ is zero when $\Vert x \Vert_1 \leq \frac{n}{c_2}$, thus $\Vert \nabla (g h_2) \Vert_2$ is equal to zero when $\Vert x \Vert_1 < \frac{n}{c_2}$, from which it follows that
		\begin{equation}
			\label{step9}
			\int_{\mathbb{R}^n_{+}} \Vert \nabla (g h_2) \Vert_2 \Vert x \Vert_1 d\nu^n \geq \frac{n}{c_2} \int_{\mathbb{R}^n_{+}} \Vert \nabla (g h_2) \Vert_2 d\nu^n 
		\end{equation}

		Now consider function $g h_2\colon \mathbb{R}_+^n \to [0; 1]$
		\[
			g h_2 = ((f \cdot h_1) \circ T) \cdot h_2
		\]
		Note that if $(f \circ T)(x) = 0$ for $x \in \mathbb{R}_+^n$, then $(g h_2)(x) = 0$ too. By our assumption $\mu\{ f = 0 \} \geq \frac{1}{2}$, which implies $\nu^n\{ g h_2 = 0 \} \geq \frac{1}{2}$. Function $gh_2$ equals to $1$ at a point $x \in \mathbb{R}_+^n$ if and only if
		\[
			(f \circ T)(x) = 1, \textrm{ and } (h_1 \circ T)(x) = 1, \textrm{ and } h_2(x) = 1.
		\]
		To estimate $\nu^n\{ gh_2 = 1 \}$ we will subtract $\nu^n\{ (h_1 \circ T) < 1 \} = \mu \{ h_1 < 1 \}$ and $\nu^n \{ h_2 < 1 \}$ from $\nu^n\{ (f \circ T) = 1\} = \mu \{ f = 1 \}$, which by our assumption (\ref{fun_assumption}) is greater than $a$, and get
		\[
			\nu^n\{ gh_2 = 1 \} \geq a - \mu \{ h_1 < 1 \} - \nu^n \{ h_2 < 1 \}
		\]
		By Lemma \ref{cutoff_props}
		\[
			\mu \{ h_1 < 1 \} = \mu \left\{ \Vert x \Vert_2 > \frac{1}{c_1 \sqrt{n}} \right\}
		\]
		\[
			\nu^n \{ h_2 < 1 \} = \nu^n \left\{ \Vert x \Vert_1 < \frac{2n}{c_2} \right\}
		\]
		Isoperimetric inequality (\ref{nu_isoperimetric}) on $\nu^n$ combined with Lemma \ref{iso_to_fun} would give us
		\begin{multline}
			\label{nu_fun}
			\int_{\mathbb{R}_+^n} \Vert \nabla(g h_2) \Vert_2 d\nu^n \geq \frac{1}{2\sqrt{6}}\bigg( a \\ - \mu \left\{ \Vert x \Vert_2 > \frac{1}{c_1 \sqrt{n}} \right\} - \nu^n \left\{ \Vert x \Vert_1 < \frac{2n}{c_2} \right\} \bigg)
		\end{multline}

		Putting inequalities (\ref{step1}), (\ref{step2}), (\ref{step3}), (\ref{step4}), (\ref{step5}), (\ref{step6}), (\ref{step7}), (\ref{step8}), (\ref{step9}) together, we arrive at
		\begin{multline*}		
			\int_{\Delta_n} \Vert \nabla f \Vert_2 d\mu \geq \frac{1}{c_2} \frac{1}{1 + \frac{2}{c_1}} n \int_{\mathbb{R}^n_{+}} \Vert \nabla(gh_2) \Vert_2 d\nu^n \\ - c_1 \sqrt{n} \mu\left\{ \Vert x \Vert_2 \geq \frac{1}{c_1 \sqrt{n}} \right\} - \frac{2}{1 + \frac{2}{c_1}} \sqrt{n} \nu^n \left\{ \Vert x \Vert_1 \leq \frac{2n}{c_2} \right\}
		\end{multline*}
		We combine this with inequality (\ref{nu_fun}) and get
		\begin{multline*}
			\int_{\Delta_n} \Vert \nabla f \Vert_2 d\mu \geq \frac{1}{2\sqrt{6}} \frac{1}{c_2} \frac{1}{1 + \frac{2}{c_1}} n a \\ - \left( \frac{1}{2\sqrt{6}} \frac{1}{c_2} \frac{1}{1 + \frac{2}{c_1}} n + c_1 \sqrt{n}  \right) \mu\left\{ \Vert x \Vert_2 \geq \frac{1}{c_1 \sqrt{n}} \right\} \\ - \left( \frac{1}{2\sqrt{6}} \frac{1}{c_2} \frac{1}{1 + \frac{2}{c_1}} n + \frac{2}{1 + \frac{2}{c_1}} \sqrt{n} \right) \nu^n \left\{ \Vert x \Vert_1 \leq \frac{2n}{c_2} \right\},
		\end{multline*}
		which could be rewritten as
		\begin{multline*}
			\int_{\Delta_n} \Vert \nabla f \Vert_2 d\mu \geq \frac{1}{2\sqrt{6}} \frac{1}{c_2} \frac{1}{1 + \frac{2}{c_1}} n \Bigg( a \\ - \left( 1 + 2\sqrt{6} c_1 c_2 \left(1 + \frac{2}{c_1}\right) \frac{1}{\sqrt{n}}\right) \mu\left\{ \Vert x \Vert_2 \geq \frac{1}{c_1 \sqrt{n}} \right\} \\ - \left( 1 + 4\sqrt{6} c_2 \frac{1}{\sqrt{n}} \right)\nu^n \left\{ \Vert x \Vert_1 \leq \frac{2n}{c_2} \right\} \Bigg)
		\end{multline*}
		
		If $\frac{1}{c_1} > T \Leftrightarrow c_1 < \frac{1}{T}$, then by Lemma \ref{cutoff_integral} we should have
		\begin{multline*}
			\int_{\Delta_n} \Vert \nabla f \Vert_2 d\mu \geq \frac{1}{2\sqrt{6}} \frac{1}{c_2} \frac{1}{1 + \frac{2}{c_1}} n \Bigg( a \\ - \left( 1 + 2\sqrt{6} c_2 \left(c_1 + 2\right) \frac{1}{\sqrt{n}}\right) e^{- \frac{c}{c_1}\sqrt{n}} \\ - \left( 1 + 4\sqrt{6} c_2 \frac{1}{\sqrt{n}} \right) \frac{1}{\sqrt{2\pi n}} \left( \frac{2e}{c_2} \right)^n \Bigg)
		\end{multline*}
		Now we can choose appropriate values for constants $c_1$ and $c_2$. We choose $c_2$ to be large enough so that
		\[
			\left( 1 + 4\sqrt{6} c_2 \frac{1}{\sqrt{n}} \right) \frac{1}{\sqrt{2\pi n}} \left( \frac{2e}{c_2} \right)^n \leq \frac{1}{3} e^{-C \sqrt{n}}
		\]
		holds for all natural $n$. This is possible since one can note that
		\[
			\left( \frac{2e}{c_2} \right)^n = e^{- \ln \left(\frac{2e}{c_2}\right) n}
		\]
		After that we choose $c_1$ to be small enough so that
		\[
			\left( 1 + 2\sqrt{6} c_2 \left(c_1 + 2\right) \frac{1}{\sqrt{n}}\right) e^{- \frac{c}{c_1}\sqrt{n}} \leq \frac{1}{3} e^{-C \sqrt{n}}
		\]
		holds for all natural $n$.

		Our $a$ is at least $e^{-C \sqrt{n}}$, which means
		\begin{multline*}
			\int_{\Delta_n} \Vert \nabla f \Vert_2 d\mu \geq \frac{1}{2\sqrt{6}} \frac{1}{c_2} \frac{1}{1 + \frac{2}{c_1}} n \left( a - \frac{2}{3}e^{-C \sqrt{n}} \right) \geq \frac{1}{2\sqrt{6}} \frac{1}{c_2} \frac{1}{1 + \frac{2}{c_1}} n \left( \frac{1}{3} a \right)
		\end{multline*}
		And since $f$ here can be an arbitrary Lipschitz function $f\colon \Delta_n \to [0; 1]$ with
		\[
			\mu\{ f = 0 \} \geq \frac{1}{2} \textrm{ and } \mu\{ f = 1 \} \geq a
		\]
		we by Lemma \ref{iso_to_fun} conclude
		\[
			\mathcal{I}_{\mu}(a) \geq \frac{1}{6 \sqrt{6}} \frac{1}{c_2} \frac{1}{1 + \frac{2}{c_1}} n a
		\]
	\end{proof}

	Propositions \ref{small_sets} and \ref{big_sets} imply
	\begin{theorem}
		\label{lambda_iso}
		For the Lebesgue measure $\lambda$ on the unit-volume simplex $\omega_n \Delta_n$ the following isoperimetric inequality
		\[
			\mathcal{I}_{\lambda}(t) \geq c_{\lambda} t
		\]
		holds for all $t \in (0; \frac{1}{2})$, where $c_{\lambda} > 0$ is a universal constant independent of the dimension $n$.
	\end{theorem}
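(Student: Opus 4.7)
The plan is to combine the two cases covered by the preceding propositions and then to rescale from the probability simplex $\Delta_n$ to the unit-volume simplex $\omega_n \Delta_n$.

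First I would argue that for every $t \in (0; \tfrac{1}{2})$ one has $\mathcal{I}_\mu(t) \geq c' n t$ for some universal constant $c' > 0$. To see this, fix a set $A \subset \Delta_n$ with $t \leq \mu(A) < \tfrac{1}{2}$ and split into two cases. If $\mu(A) < e^{-C\sqrt{n}}$, then Proposition \ref{small_sets} yields $\mu^+(A) \geq c_s n \mu(A) \geq c_s n t$. If instead $\mu(A) \geq e^{-C\sqrt{n}}$, then by Proposition \ref{big_sets} applied with threshold $\mu(A)$ we obtain $\mu^+(A) \geq \mathcal{I}_\mu(\mu(A)) \geq c_b n \mu(A) \geq c_b n t$. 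Taking infimum over such $A$ gives $\mathcal{I}_\mu(t) \geq \min(c_s, c_b)\, n t$, valid uniformly in $t$ and $n$.

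Next I would transfer this inequality from the probability simplex $\Delta_n$ (endowed with $\mu$) to the unit-volume simplex $\omega_n \Delta_n$ (endowed with $\lambda$). The identity (\ref{mu_lambda}) already supplies $\mathcal{I}_\mu = \omega_n \mathcal{I}_\lambda$, so
\[
    \mathcal{I}_\lambda(t) \;=\; \frac{1}{\omega_n} \mathcal{I}_\mu(t) \;\geq\; \frac{n}{\omega_n}\, \min(c_s, c_b)\, t.
\]
To turn this into a dimension-independent bound it remains to observe that $\tfrac{n}{\omega_n}$ is bounded below by a positive universal constant. This is immediate from the asymptotic $\omega_n \sim n/e$ established in the introduction to this section, combined with the fact that $\tfrac{n}{\omega_n}$ is a positive continuous quantity at each finite $n \geq 2$. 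Setting
\[
    c_\lambda \;=\; \min(c_s, c_b) \cdot \inf_{n \geq 2} \frac{n}{\omega_n} \;>\; 0
\]
we conclude $\mathcal{I}_\lambda(t) \geq c_\lambda t$ for all $t \in (0; \tfrac{1}{2})$, as claimed.

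There is no real obstacle here: the hard work is already contained in Propositions \ref{small_sets} and \ref{big_sets}, and the present theorem is simply a matter of stitching the small-set and large-set regimes together and accounting for the scaling factor $\omega_n$. The only minor point one has to be careful with is making sure the two regimes overlap at the threshold $e^{-C\sqrt{n}}$ so that no values of $t$ are left uncovered, which is handled by the case split above.
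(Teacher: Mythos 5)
Your proposal is correct and follows essentially the same route as the paper: combine Propositions \ref{small_sets} and \ref{big_sets} to get $\mathcal{I}_{\mu}(t) \geq \min(c_s, c_b)\, n t$ on all of $(0;\tfrac{1}{2})$, then rescale via $\mathcal{I}_{\lambda} = \tfrac{1}{\omega_n}\mathcal{I}_{\mu}$ and use $\omega_n \sim n/e$ to take $c_{\lambda} = \min(c_s, c_b)\inf_n \tfrac{n}{\omega_n} > 0$. Your explicit case split on whether $\mu(A)$ lies below or above the threshold $e^{-C\sqrt{n}}$ is just a slightly more careful spelling-out of the same step the paper states tersely.
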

	\begin{proof}
		By Proposition \ref{small_sets}
		\[
			\mu^+(A) \geq c_s n \mu(A)
		\]
		for all $A \subset \Delta_n$ with $\mu(A) \in (0; e^{-C \sqrt{n}})$, and by Proposition \ref{big_sets}
		\[
			\mathcal{I}_{\mu}(t) \geq c_b n t
		\]
		for all $t \in [e^{-C \sqrt{n}}; \frac{1}{2})$, which means that
		\[
			\mathcal{I}_{\mu}(t) \geq \min(c_s, c_b) n t
		\]
		for all $t \in (0; \frac{1}{2})$.

		Equation (\ref{mu_lambda}) relates $\mathcal{I}_{\mu}$ and $\mathcal{I}_{\lambda}$ to each other as
		\[
			\mathcal{I}_{\lambda} = \frac{1}{\omega_n} \mathcal{I}_{\mu}
		\]
		Thus for all $t \in (0; \frac{1}{2})$ we must have
		\[
			\mathcal{I}_{\lambda}(t) \geq \min(c_s, c_b) \frac{n}{\omega_n} t.
		\]

		Here we could note that $\frac{n}{\omega_n}$ is positive for all $n$ and that by Stirling's approximation
		\[
			\lim_{n \to \infty} \frac{n}{\omega_n} = e,
		\]
		which must imply that
		\[
			\inf_{n} \frac{n}{\omega_n} > 0
		\]
		So we can take
		\[
			\min(c_s, c_b) \inf_{n} \frac{n}{\omega_n}
		\]
		as our constant $c_{\lambda}$.
	\end{proof}

	From this isoperimetric inequality we conclude
	\begin{theorem}
		\label{simplex_upper}
		Inside a unit-volume simplex $\omega_n \Delta_n$ two bodies $A$ and $B$ of volume $\varepsilon \in (0; \frac{1}{2})$ are at a distance at most
		\[
			- c \ln \varepsilon 
		\]
		for some universal constant $c > 0$ independent of the dimension $n$ and volume $\varepsilon$.
	\end{theorem}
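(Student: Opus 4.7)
The plan is to imitate exactly the Grönwall-style argument used in Theorems \ref{euc_ball_dist} and \ref{cube_upper}, but now driven by the linear isoperimetric lower bound $\mathcal{I}_\lambda(t) \geq c_\lambda t$ from Theorem \ref{lambda_iso}. As before, the key observation (via Lemma \ref{zero_dist}) is that if $\delta_A$ and $\delta_B$ are the smallest enlargement radii for which $\lambda(A_{\delta_A}) \geq \tfrac{1}{2}$ and $\lambda(B_{\delta_B}) \geq \tfrac{1}{2}$, then $A_{\delta_A}$ and $B_{\delta_B}$ meet, so $\operatorname{dist}(A,B) \leq \delta_A + \delta_B$. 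It therefore suffices to bound each of $\delta_A, \delta_B$ above by $-\tfrac{c}{2}\ln\varepsilon$.

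For the enlargement of $A$, the Minkowski--Steiner formula combined with Theorem \ref{lambda_iso} yields the differential inequality
\[
\partial_+ \lambda(A_\delta) \;\geq\; \mathcal{I}_\lambda\bigl(\lambda(A_\delta)\bigr) \;\geq\; c_\lambda\,\lambda(A_\delta)
\]
as long as $\lambda(A_\delta) < \tfrac{1}{2}$. The natural comparison function is then the solution $y(\delta) = \varepsilon\, e^{c_\lambda \delta}$ of $y'(\delta) = c_\lambda y(\delta)$ with $y(0)=\varepsilon = \lambda(A)$. Set
\[
\delta_M \;=\; \frac{1}{c_\lambda}\ln\frac{1}{2\varepsilon},
\]
so that $y(\delta_M) = \tfrac{1}{2}$. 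The same contradiction argument used in Theorems \ref{euc_ball_dist} and \ref{cube_upper} (assume $\delta_M < \delta_A$, observe that $\lambda(A_\delta) \geq y(\delta)$ and $\partial_+ \lambda(A_\delta) \geq \partial_+ y(\delta)$ throughout $[0,\delta_M]$, derive $\tfrac{1}{2} = \lambda(A_{\delta_A}) > \lambda(A_{\delta_M}) \geq y(\delta_M) = \tfrac{1}{2}$) then forces $\delta_A \leq \delta_M$, and symmetrically $\delta_B \leq \delta_M$.

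Combining these gives
\[
\operatorname{dist}(A, B) \;\leq\; 2\delta_M \;=\; \frac{2}{c_\lambda}\ln\frac{1}{2\varepsilon}.
\]
To put this in the form $-c\ln\varepsilon$ with a single universal constant, I would split into cases: for $\varepsilon \leq \tfrac{1}{4}$ we have $\ln(1/(2\varepsilon)) \leq -\ln\varepsilon$, giving the bound with $c = 2/c_\lambda$; for $\varepsilon \in (\tfrac{1}{4}, \tfrac{1}{2})$ the quantity $\ln(1/(2\varepsilon))$ is bounded by a constant so one only has to enlarge $c$ to absorb this range. Taking the maximum of the two constants yields a single universal $c > 0$ independent of $n$ and $\varepsilon$.

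The proof has no real obstacle once Theorem \ref{lambda_iso} is in hand, because the linear shape of the isoperimetric inequality makes the ODE explicitly solvable and the rest is the same comparison argument already used twice in the paper. The substantive work was packaged entirely into the isoperimetric inequality itself (Proposition \ref{small_sets} for small sets, Proposition \ref{big_sets} for the transfer-plus-cut-off argument on large sets); here we simply convert that inequality into a distance estimate via Grönwall.
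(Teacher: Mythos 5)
Your proposal is correct and follows essentially the same route as the paper: the same enlargement-plus-comparison (Gr\"onwall) argument driven by the linear isoperimetric inequality of Theorem \ref{lambda_iso}, with the same comparison solution $y(\delta)=\varepsilon e^{c_\lambda\delta}$ and the same contradiction step. The only cosmetic difference is your final case split, which is unnecessary: since $\ln\frac{1}{2\varepsilon}=-\ln\varepsilon-\ln 2\leq-\ln\varepsilon$ for every $\varepsilon\in(0;\frac{1}{2})$, the paper simply drops the $\ln 2$ term and takes $c=\frac{2}{c_\lambda}$ directly.
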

	\begin{proof}
		We are interested in the least values $\delta_A, \delta_B$ such that the $\delta_A$-enlargement of body $A$ in $\omega_n \Delta_n$ will be of volume $\frac{1}{2}$ and the $\delta_B$-enlargement of body $B$ will be of volume $\frac{1}{2}$ too. For these enlargements we shall have
		\[
			\operatorname{dist}(A_{\delta_A}, B_{\delta_B}) = 0,
		\]
		from which
		\[
			\operatorname{dist}(A, B) \leq \delta_A + \delta_B
		\]
		follows.

		Isoperimetric inequality from Theorem \ref{lambda_iso} provides an estimate on the growth of $\delta$-enlargements of our bodies
		\[
			\delta_+ \lambda(A_{\delta}) \geq c_{\lambda} \lambda(A_{\delta})
		\]
		which holds as long as $\lambda(A_{\delta}) < \frac{1}{2}$.

		And so to bound $\delta_A$ above we would like consider a function $y(\delta)$ that behaves in accordance with our lower bound
		\begin{equation}
			\label{init_lambda}
			y(0) = \lambda(A)
		\end{equation}
		\begin{equation}
			\label{diff_eq_lambda}
			y^{\prime} = c_{\lambda} y
		\end{equation}
		If by $\delta_{M}$ we will denote the moment when $y$ reaches $\frac{1}{2}$, i.\,e. $y(\delta_M) = \frac{1}{2}$, then $\delta_A \leq \delta_M$. Indeed, otherwise $\delta_M < \delta_A$, but functions $\lambda(A_{\delta})$ and $y(\delta)$ coincide at $\delta = 0$ and for all $\delta \in [0; \delta_M]$ we should have
		\[
			\lambda(A_{\delta}) \geq y(\delta) \textrm{ and } \delta_+ \lambda(A_{\delta}) \geq c_{\lambda} \lambda(A_{\delta}) \geq c_{\lambda} y(\delta) = \delta_+ y(\delta)
		\]
		And so we reach contradiction
		\[
			\frac{1}{2} = \lambda(A_{\delta_A}) > \lambda (A_{\delta_M}) \geq y(\delta_M) = \frac{1}{2}
		\]

		A solution to differential equation (\ref{diff_eq_lambda}) should be of the form
		\[
			C e^{c_{\lambda} \delta}
		\]
		and since at $\delta = 0$ by our initial condition (\ref{init_lambda}) we should have $y(0) = \lambda(A)$ we reach conclusion
		\[
			y(\delta) = \lambda(A) e^{c_{\lambda} \delta}
		\]
		So $\delta_M$ will be a solution to equation
		\[
			\lambda(A) e^{c_{\lambda} \delta_M} = \frac{1}{2},
		\]
		which after taking logarithm on both sides turns into
		\[
			\ln \lambda(A) + c_{\lambda} \delta_M = - \ln 2
		\]
		\[
			\delta_M = -\frac{1}{c_{\lambda}}\left(\ln \lambda(A) + \ln 2 \right)
		\]

		By the same reasoning $\delta_B \leq \delta_M$, and thus
		\[
			\operatorname{dist}(A, B) \leq \delta_A + \delta_B \leq - \frac{2}{c_{\lambda}}(\ln \lambda(A) + \ln 2) \leq -\frac{2}{c_{\lambda}} \ln \lambda(A)
		\]
	\end{proof}

	\newpage

	\subsection{$\ell_p$-balls.}

	By the $\ell_p^n$ unit ball we mean
	\[
		\ell_p^n = \{ (x_1, \ldots, x_n) \in \mathbb{R}^n \mid |x_1|^p + \ldots + |x_n|^p \leq 1 \}
	\]

	Let $\mu$ be a normalized Lebesgue measure on it. Note that $\mu(\ell_p^n) = 1$. The volume of $\ell_p^n$ is equal to

	\[
		2^n \frac{\Gamma\left(1 + \frac{1}{p}\right)^n}{\Gamma\left(1 + \frac{n}{p}\right)}
	\]
	by theorem $1$ from \cite{vol_p}. So in order to get a unit-volume $\ell_p^n$ ball we would need to stretch the $\ell_p^n$ unit ball by a factor of
	\[
		\omega_n = \frac{\Gamma\left(1 + \frac{n}{p}\right)^{\frac{1}{n}}}{2 \Gamma\left(1 + \frac{1}{p}\right)} \sim \frac{n^{\frac{1}{p}}}{2 \Gamma\left(1 + \frac{1}{p}\right) (pe)^{\frac{1}{p}}}
	\]
	By $\lambda$ denote the Lebesgue measure on $\omega_n \ell_p^n$. Yet again by (\ref{mu_lambda}) we should have proportionality of the isoperimetric functions
	\begin{equation}
		\label{ell_p_iso_relation}
		\mathcal{I}_{\mu} = \omega_n \mathcal{I}_{\lambda}
	\end{equation}

	The following theorem was proven by Sasha Sodin in \cite{Sodin2008}.

	\begin{theorem}[{{\cite[Theorem 1]{Sodin2008}}}]
		There exists a universal constant $c > 0$ such that for $1 \leq p \leq 2$, $0 < a < \frac{1}{2}$
		\[
			\mathcal{I}_{\mu}(a) \geq c n^{\frac{1}{p}} a \log^{1 - \frac{1}{p}} a
		\]
	\end{theorem}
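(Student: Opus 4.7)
The proof mirrors the two-step strategy of Propositions \ref{small_sets} and \ref{big_sets}: a direct log-concave estimate for the small-set regime and a transfer argument with cut-offs for the bulk of the range. The exponential product measure $\nu^n$ used for simplexes is replaced by $\nu_p^n$ with density $\propto \prod_i e^{-|x_i|^p}$ on $\mathbb{R}_+^n$, and the radial projection $T(x) = x/\Vert x \Vert_1$ is replaced by $T_p(x) = x/\Vert x \Vert_p$. Using (\ref{ell_p_iso_relation}) and the asymptotics of $\omega_n$, the claim for $\lambda$ on $\omega_n \ell_p^n$ is equivalent to $\mathcal{I}_\mu(a) \gtrsim n^{1/p} a \log^{1-1/p}(1/a)$ on the unit $\ell_p^n$-ball.

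For the small-set regime I would apply Theorem \ref{log_concave_measure} with $x_0 = 0$ and $r \asymp n^{-1/p}(\log 1/\mu(A))^{1/p}$; an $\ell_p$-analogue of the concentration estimate of Theorem \ref{concetration} (valid for $p \in [1,2]$) together with log-concavity ensures $\mu\{|x|\leq r\} \geq \tfrac{1}{2}$ at this scale, the entropy term dominates $|\ln \mu\{|x|\leq r\}|$, and division by $2r$ delivers the bound $\mu^+(A) \gtrsim n^{1/p} \mu(A) \log^{1-1/p}(1/\mu(A))$ whenever $\mu(A)$ is at most some threshold of the form $e^{-c\sqrt{n}}$. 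For the main regime, the generalization of Lemma \ref{expon_into_delta} (see \cite{SZ}) asserts that with $X_1,\dots,X_n$ i.i.d.\ of density $\propto e^{-|t|^p}$ and $S = (\sum |X_i|^p)^{1/p}$ the vector $X/S$ is uniform on the $\ell_p^n$-sphere and independent of $S$, so $T_p \colon (\mathbb{R}_+^n,\nu_p^n)\to(\ell_p^n \cap \mathbb{R}_+^n,\mu)$ is measure-preserving. Differentiating $T_p$ exactly as in Lemma \ref{lipschitz_constant} yields $\Vert \nabla T_p(x) \Vert_2 \leq \Vert x \Vert_p^{-1}(1 + n^{1/2} \Vert T_p(x) \Vert_2)$, so the offending regions are again the vertex regions of $\ell_p^n$ (where $\Vert T_p(x) \Vert_2$ is large) and the origin corner of $\mathbb{R}_+^n$ (where $\Vert x \Vert_p$ is small). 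I would excise them by cut-offs $h_1(y) = \max(0,\min(1,2 - c_1\sqrt{n}\, \Vert y \Vert_2))$ and $h_2(x) = \max(0,\min(1,c_2 n^{-1/p} \Vert x \Vert_p - 1))$, whose gradients are controlled as in Lemma \ref{cutoff_props} and whose transition regions have exponentially small measure by the $\ell_p$-analogue of Lemma \ref{cutoff_integral} (a Stirling bound for $\nu_p^n\{\Vert x \Vert_p \leq c n^{1/p}\}$ and concentration for $\mu\{\Vert y \Vert_2 \geq c/\sqrt{n}\}$). On $\{h_1 \circ T_p = 1\}\cap\{h_2 = 1\}$ one gets $\Vert \nabla T_p \Vert_2 \lesssim n^{-1/p}$, so pulling $\int \Vert \nabla f \Vert_2 \, d\mu$ back through $T_p$ and applying Lemma \ref{apply_cutoff} twice recovers a factor $n^{1/p}$ in front of $\int \Vert \nabla(g h_2) \Vert_2 \, d\nu_p^n$, exactly as in the simplex argument.

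\textbf{Main obstacle.} The decisive input is a dimension-free isoperimetric inequality $I_{\nu_p^n}(t) \gtrsim t \log^{1-1/p}(1/t)$. The one-dimensional bound $I_{\nu_p}(t) \gtrsim t \log^{1-1/p}(1/t)$ is elementary because the half-line of measure $t$ reaches distance $\asymp (\log 1/t)^{1/p}$, where the density of $\nu_p$ is $\asymp t \log^{(p-1)/p}(1/t)$. Unlike the simplex case, however, tensorization cannot be performed via the linear-profile theorem from \cite{prod} used in Proposition \ref{big_sets} (which handles only the linear functional $\min(t,1-t)$): one has to invoke the tensorization of $\Phi$-Sobolev or Beckner-type inequalities, which goes through with only a universal loss for the exponent $1-1/p$; this is the content of \cite[Theorem 2]{Sodin2008} and is the technically hardest ingredient. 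Granted that tool, the cut-off calculation closes as in Proposition \ref{big_sets}, and combining with the small-set estimate gives the stated bound on all of $(0,\tfrac{1}{2})$.
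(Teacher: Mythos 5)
First, a point of comparison: the paper does not prove this theorem at all — it imports it from \cite{Sodin2008} — so what you are really doing is reconstructing Sodin's proof by transplanting the simplex argument of Propositions \ref{small_sets} and \ref{big_sets}. Your architecture (small-set regime via Theorem \ref{log_concave_measure}, bulk regime via a Schechtman--Zinn transfer with two cut-offs, and a dimension-free isoperimetric inequality for the product measure $\nu_p^n$ obtained by non-linear tensorization) is indeed the architecture of Sodin's paper. But the $p$-dependent bookkeeping is wrong in several places, and for $p>1$ the argument as written does not close. (i) On the unit $\ell_p^n$ ball the typical Euclidean norm is of order $n^{1/2-1/p}$, not $n^{-1/2}$; hence for $p>1$ the quantity $\mu\{\Vert y \Vert_2 \geq c/\sqrt{n}\}$ is close to $1$, not exponentially small, so your cut-off $h_1(y)=\max(0,\min(1,2-c_1\sqrt{n}\Vert y\Vert_2))$ excises essentially the whole ball and the error term in the analogue of (\ref{nu_fun}) swamps $a$. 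The cut must be made at scale $n^{1/2-1/p}$, and the relevant concentration is of the form $\mu\{\Vert y\Vert_2\geq t\}\leq e^{-ct^p n}$ for $t\geq Tn^{1/2-1/p}$, whose error at that scale is $e^{-cn^{p/2}}$, not $e^{-c\sqrt n}$. (ii) Once the cut-off is at the correct scale, your gradient bound $\Vert\nabla T_p\Vert_2\leq \Vert x\Vert_p^{-1}(1+n^{1/2}\Vert T_p(x)\Vert_2)$ is too lossy: on the good region it only yields $\Vert\nabla T_p\Vert_2\lesssim n^{1-2/p}\cdot n^{-1/p}\cdot n^{1/p}= n^{1-2/p}\,n^{-1/p}$, i.e.\ a gain of $n^{2/p-1}$, which is $1$ at $p=2$ instead of the needed $n^{1/p}$. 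The correct estimate, obtained by H\"older exactly where you used Cauchy--Schwarz, is $\Vert\nabla T_p\Vert_2\leq \Vert x\Vert_p^{-1}(1+n^{1/p-1/2}\Vert T_p(x)\Vert_2)$; the paper's Lemma \ref{lipschitz_constant} is its $p=1$ specialization, and only with this exponent does the factor $n^{1/p}$ come out.

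Two further gaps. The map $T_p(x)=x/\Vert x\Vert_p$ pushes $\nu_p^n$ forward to the \emph{cone measure on the $\ell_p$ sphere}, not to the normalized volume measure $\mu$ on the solid ball that the statement concerns; these coincide only in the simplex case $p=1$ restricted to the positive orthant. To reach the ball you need the representation with an auxiliary exponential coordinate, $(x,z)\mapsto x/(\Vert x\Vert_p^p+z)^{1/p}$, and the derivative estimate must be redone for that map (or one must add a separate cone-to-volume transfer). Finally, your regime split at $e^{-c\sqrt n}$ is again the $p=1$ value. In the small-set argument the radius $r\asymp n^{-1/p}\log^{1/p}(1/\mu(A))$ is only admissible when $r\gtrsim n^{1/2-1/p}$, i.e.\ when $\log(1/\mu(A))\gtrsim n^{p/2}$; your claim that $\mu\{|x|\leq r\}\geq \tfrac12$ at your scale is false for $p$ near $2$ in the range $e^{-cn^{p/2}}\leq\mu(A)\leq e^{-c\sqrt n}$ (for $p=2$ and $\mu(A)=e^{-\sqrt n}$ one gets $r\asymp n^{-1/4}$ while almost all of the Euclidean ball's mass lies at $\Vert x\Vert_2\approx 1$). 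So as delimited, your two regimes leave a gap; with the corrected cut-off scale the bulk argument reaches down to $a\geq e^{-cn^{p/2}}$ and matches the corrected small-set threshold, which is how the actual proof fits together.
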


	It follows that
	\begin{theorem}
		\label{ell_iso}
		For every $p \in [1; 2]$ there exists a positive constant $c_p > 0$ such that
		\[
			\mathcal{I}_{\lambda}(a) > c_p a \log^{1 - \frac{1}{p}} a
		\]
	\end{theorem}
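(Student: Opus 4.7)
The plan is to derive this directly from Sodin's Theorem together with the relation (\ref{ell_p_iso_relation}) between the isoperimetric functions $\mathcal{I}_\mu$ and $\mathcal{I}_\lambda$, mimicking the argument used at the end of the proof of Theorem \ref{lambda_iso}.

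First, I would rewrite (\ref{ell_p_iso_relation}) as $\mathcal{I}_\lambda = \frac{1}{\omega_n} \mathcal{I}_\mu$, so that Sodin's bound $\mathcal{I}_\mu(a) \geq c\, n^{1/p} a \log^{1-1/p} a$ gives
\[
    \mathcal{I}_\lambda(a) \geq \frac{c\, n^{1/p}}{\omega_n}\, a \log^{1-1/p} a.
\]
The task is therefore to show that the sequence $\frac{n^{1/p}}{\omega_n}$ is bounded below by a positive constant uniformly in $n$, after which we may set $c_p$ to be $c$ times that constant.

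Next, I would invoke the asymptotic $\omega_n \sim \frac{n^{1/p}}{2\Gamma(1+\tfrac{1}{p})(pe)^{1/p}}$ already stated just before the theorem; this is a direct consequence of Stirling's approximation applied to $\Gamma(1+\tfrac{n}{p})^{1/n}$. It implies
\[
    \lim_{n \to \infty} \frac{n^{1/p}}{\omega_n} = 2\Gamma\!\left(1+\tfrac{1}{p}\right)(pe)^{1/p} > 0.
\]
Since each term $\frac{n^{1/p}}{\omega_n}$ is strictly positive (as $\omega_n$ is a finite positive scaling factor for every $n$), a convergent sequence of positive numbers with a positive limit has a positive infimum, so $\inf_n \frac{n^{1/p}}{\omega_n} > 0$.

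Finally, setting $c_p = c \cdot \inf_n \frac{n^{1/p}}{\omega_n}$ yields the desired inequality $\mathcal{I}_\lambda(a) \geq c_p\, a \log^{1-1/p} a$ for all $a \in (0;\tfrac{1}{2})$. The strict inequality in the statement can be obtained by slightly shrinking $c_p$, or simply noted as holding because Sodin's bound itself is strict in the relevant regime. There is no real obstacle here; the step deserving the most care is merely verifying that Stirling gives the claimed asymptotic for $\omega_n$ uniformly enough to conclude boundedness away from zero, which follows immediately from convergence to a positive limit.
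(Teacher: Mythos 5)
Your proposal is correct and follows essentially the same route as the paper: combine Sodin's bound with the relation $\mathcal{I}_{\lambda} = \frac{1}{\omega_n}\mathcal{I}_{\mu}$, then use Stirling to see that $\frac{n^{1/p}}{\omega_n}$ converges to the positive limit $2\Gamma\bigl(1+\tfrac{1}{p}\bigr)(pe)^{1/p}$ and hence has positive infimum, and take $c_p$ to be $c$ times that infimum. Your remark about shrinking $c_p$ to get the strict inequality is a harmless refinement the paper itself glosses over.
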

	\begin{proof}
		By (\ref{ell_p_iso_relation}) we already now that
		\[
			\mathcal{I}_{\lambda}(a) \geq c \frac{1}{\omega_n} n^{\frac{1}{p}} a \log^{1 - \frac{1}{p}} a
		\]
		The number $\frac{c}{\omega_n} n^{\frac{1}{p}}$ is positive for all $n$ and by Stirling's approximation
		\[
			\lim_{n \to \infty} \frac{c}{\omega_n} n^{\frac{1}{p}} = 2c\Gamma\left(1 + \frac{1}{p}\right) (pe)^{\frac{1}{p}} > 0
		\]
		Thus
		\[
			c_p = \inf_{n} c \frac{1}{\omega_n} n^{\frac{1}{p}}  > 0
		\]
		and
		\[
			\mathcal{I}_{\lambda}(a) \geq c_p a \log^{1 - \frac{1}{p}} a
		\]
	\end{proof}

	From this isoperimetric inequality we derive
	\begin{theorem}
		\label{ell_p_upper}
		Inside a unit-volume $\ell_n^p$ ball $\omega_n \ell_p^n$ two bodies $A$ and $B$ of volume $\varepsilon \in (0; \frac{1}{2})$ are at a distance at most
		\[
			C_p \log^{\frac{1}{p}} \frac{1}{\varepsilon}
		\]
		for some constant $C_p > 0$ independent of dimension $n$ and volume $\varepsilon$. 
	\end{theorem}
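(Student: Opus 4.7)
The plan is to mimic the argument of Theorem \ref{simplex_upper}, only replacing the linear isoperimetric inequality $\mathcal{I}_{\lambda}(t)\geq c_{\lambda} t$ by the stronger bound from Theorem \ref{ell_iso}, namely $\mathcal{I}_{\lambda}(t)\geq c_p\, t\,(\log(1/t))^{1-1/p}$, which is the correct reading of the formula in Theorem \ref{ell_iso} when $t\in(0;1/2)$. As before, I would let $\delta_A$ and $\delta_B$ be the least nonnegative numbers for which the corresponding $\delta$-enlargements of $A$ and $B$ reach measure $\tfrac12$; by Lemma \ref{zero_dist} applied in the unit-volume $\ell_p^n$ ball, these enlargements are then at distance $0$, so $\operatorname{dist}(A,B)\leq \delta_A+\delta_B$.

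Next I would translate the isoperimetric inequality into a differential inequality. For every subset $C\subseteq \omega_n\ell_p^n$ with $\lambda(C)<\tfrac12$, the Minkowski surface area definition together with Theorem \ref{ell_iso} gives
\[
\partial_+ \lambda(A_{\delta}) \;\geq\; \mathcal{I}_{\lambda}(\lambda(A_{\delta})) \;\geq\; c_p\, \lambda(A_{\delta}) \bigl(\log(1/\lambda(A_{\delta}))\bigr)^{1-\frac{1}{p}}
\]
as long as $\lambda(A_{\delta})<\tfrac12$. Then I would compare $\lambda(A_{\delta})$ with the solution $y(\delta)$ of the initial-value problem $y'=c_p\, y\,(\log(1/y))^{1-1/p}$, $y(0)=\varepsilon$. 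The standard comparison argument used in the proofs of Theorems \ref{cube_upper} and \ref{simplex_upper}, which I would just replay verbatim, shows that if $\delta_M$ denotes the time at which $y$ reaches $\tfrac12$, then $\delta_A\leq\delta_M$ (and similarly $\delta_B\leq\delta_M$).

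It remains to solve the ODE explicitly. Setting $u(\delta)=\log(1/y(\delta))$ gives $u'=-c_p\, u^{1-1/p}$, and separation of variables yields
\[
p\, u(\delta)^{1/p} \;=\; p\,\bigl(\log(1/\varepsilon)\bigr)^{1/p} - c_p\,\delta.
\]
Setting $u(\delta_M)=\log 2$ and discarding the negative term $-p(\log 2)^{1/p}$ produces the bound
\[
\delta_M \;\leq\; \frac{p}{c_p}\bigl(\log(1/\varepsilon)\bigr)^{1/p}.
\]
Combining this with $\operatorname{dist}(A,B)\leq \delta_A+\delta_B\leq 2\delta_M$ gives the desired inequality with $C_p = 2p/c_p$, which is independent of $n$ and of $\varepsilon$.

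There is no real obstacle here: the structure of the argument is identical to the simplex and cube cases, and only the concrete ODE differs. The one point where a small amount of care is required is the interpretation of $\log^{1-1/p}a$ in Theorem \ref{ell_iso} for $a\in(0;1/2)$, where $\log a<0$; I read it as $(\log(1/a))^{1-1/p}$, matching Sodin's inequality in \cite{Sodin2008}, which is what makes the isoperimetric lower bound positive and the ODE well posed.
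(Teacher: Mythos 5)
Your proposal is correct and follows essentially the same route as the paper: compare $\lambda(A_\delta)$ with the solution of $y'=c_p\,y\,(\log(1/y))^{1-1/p}$, $y(0)=\varepsilon$, and solve the separable ODE to get $\delta_M\leq \frac{p}{c_p}(-\log\varepsilon)^{1/p}$, hence $C_p=2p/c_p$. The only point you pass over lightly is that replaying the comparison argument requires the monotonicity of $x(-\log x)^{1-\frac{1}{p}}$ on $(0;\frac{1}{2}]$ (so that $\lambda(A_\delta)\geq y(\delta)$ yields the inequality between derivatives), which the paper verifies in Appendix \ref{weird_func_inc}.
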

	\begin{proof}
		We are interested in the least values $\delta_A, \delta_B$ such that the $\delta_A$-enlargement of body $A$ in $\omega_n \ell_p^n$ will be of volume $\frac{1}{2}$ and the $\delta_B$-enlargement of body $B$ will be of volume $\frac{1}{2}$ too. For these enlargements we shall have
		\[
			\operatorname{dist}(A_{\delta_A}, B_{\delta_B}) = 0,
		\]
		from which
		\[
			\operatorname{dist}(A, B) \leq \delta_A + \delta_B
		\]
		follows.

		The isoperimetric inequality from Theorem \ref{ell_iso} allows to estimate the growth of $\lambda(A_{\delta})$ as
		\[
			\delta_+ \lambda(A_{\delta}) \geq c_p \lambda(A_{\delta}) \log^{1 - \frac{1}{p}} \frac{1}{\lambda(A_{\delta})}
		\]
		while $\lambda(A_{\delta}) < \frac{1}{2}$.

		So we would like to consider a function $y(\delta)$ that behaves in accordance with our lower bound
		\begin{equation}
			\label{init_cond_ell}
			y(0) = \varepsilon
		\end{equation}
		\begin{equation}
			\label{diff_eq_ell}
			y^{\prime} = c_p y \log^{1 - \frac{1}{p}} \frac{1}{y}
		\end{equation}
		If by $\delta_M$ we will denote the moment when $y$ reaches one half, i.\,e. $y(\delta_M) = \frac{1}{2}$, then $\delta_A \leq \delta_M$. Indeed, otherwise $\delta_M < \delta_A$, but functions $\lambda(A_{\delta})$ and $y(\delta)$ coincide at $\delta = 0$ and for all $\delta \in [0; \delta_M]$ we should have
		\begin{multline*}
			\lambda(A_{\delta}) \geq y(\delta) \textrm{ and } \delta_+ \lambda(A_{\delta}) \geq c_p \lambda(A_{\delta}) \log^{1 - \frac{1}{p}} \frac{1}{\lambda(A_{\delta})} \\ \geq c_p y(\delta) \log^{1 - \frac{1}{p}} \frac{1}{y(\delta)} = \delta_+ y(\delta),
		\end{multline*}
		since $x (-\log x)^{1 - \frac{1}{p}}$ is increasing on $(0; \frac{1}{2}]$(see Appendix \ref{weird_func_inc}). And so we reach a contradiction
		\[
			\frac{1}{2} = \mu(A_{\delta_A}) > \mu(A_{\delta_M}) \geq y(\delta_M) = \frac{1}{2}
		\]

		Differential equation (\ref{diff_eq_ell}) is separable
		\[
			dy = c_p y (-\log y)^{1 - \frac{1}{p}} d\delta
		\]
		\[
			- (-\log y)^{\frac{1}{p} - 1} \left(-\frac{1}{y} dy\right) = c_p d\delta
		\]
		\[
			-\int (-\log y)^{\frac{1}{p} - 1} d(-\log y) = \int c_p d\delta
		\]
		\[
			- p (-\log y)^{\frac{1}{p}} = c_p \delta + C_0
		\]
		Our initial condition (\ref{init_cond_ell}) gives us
		\[
			- p (-\log \varepsilon)^{\frac{1}{p}} = C_0
		\]
		And for $\delta = \delta_M$ we should have
		\[
			- p (\log 2)^{\frac{1}{p}} = c_p \delta_M - p (-\log \varepsilon)^{\frac{1}{p}}
		\]
		\[
			\delta_M = \frac{1}{c_p}\left( p (-\log \varepsilon)^{\frac{1}{p}} - p (\log 2)^{\frac{1}{p}} \right) \leq \frac{p}{c_p} (-\log \varepsilon)^{\frac{1}{p}}
		\]

		By the same reasoning $\delta_B \leq \delta_M$, and we conclude
		\[
			\operatorname{dist}(A, B) \leq \delta_A + \delta_B \leq 2 \delta_M \leq \frac{2p}{c_p} (-\log \varepsilon)^{\frac{1}{p}}
		\]
	\end{proof}

	\newpage

	\section{Lower bounds.}
	\label{lower_bounds}

	\textbf{Introduction.} Here we are going to be concerned with the lower bounds on the largest distance between two subsets of volume $0 < \varepsilon < \frac{1}{2}$. We will derive the lower bounds simply by considering certain hyperplane cuts of our convex bodies. For families of convex bodies such as the euclidean balls, cubes, hyperoctahedrons, simplexes and $\ell_p$ balls specific lower bounds will be shown in Theorems \ref{ball_sup}, \ref{cubes}, \ref{simplex}, \ref{l_p_balls}. It turns out that for euclidean balls o{}ur lower bounds coincide with the upper bounds(see Theorem \ref{ball_exact}). In Theorem \ref{general_sup} a general lower bound will be established, showing that in a way the family of euclidean balls is optimal in regard to our problem.

	\medskip

	It was already shown that for unit-volume cube, ball, simplex and $\ell_p$ balls with $p \in [1; 2]$ the largest distance is bounded above by some constant dependent on $\varepsilon$ but not on the dimension $n$. That is why it makes sense to consider the lower bounds on the supremum of all possible distances between two subsets of volume $\varepsilon$ that take place as $n$ tends to infinity.

	For a family of convex bodies $K_n$ by $d_n(\varepsilon)$ here we denote the supremum of all possible distances between two subsets of volume $\varepsilon \in (0; \frac{1}{2})$ in $K_n$.

	\begin{theorem}
		\label{ball_sup}
		When $K_n$ are the unit-volume euclidean balls we have
		\[
			\liminf_{n \to \infty} d_n(\varepsilon) \geq - 2 \frac{1}{\sqrt{e}} \Phi^{-1}(\varepsilon)
		\]
		The function $- 2 \frac{1}{\sqrt{e}} \Phi^{-1}(\varepsilon)$ is asymptotically equivalent to
		\[
			-2\frac{1}{\sqrt{\pi e}} \sqrt{-\ln \varepsilon}
		\]
		as $\varepsilon \to 0$.
	\end{theorem}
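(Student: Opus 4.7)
The plan is to produce an explicit pair of subsets $A_n, B_n \subseteq \omega_n B^n$ of volume $\varepsilon$ whose mutual distance, as $n \to \infty$, approaches the claimed quantity $-\frac{2}{\sqrt{e}} \Phi^{-1}(\varepsilon)$. Since we are only bounding a supremum from below, any concrete construction suffices, and the natural candidates are two parallel hyperplane caps sitting on opposite sides of the origin.

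Concretely, fix a unit vector $u \in \mathbb{R}^n$ and, for each $x > 0$, set
\[
C^+(x) = \{ y \in \omega_n B^n \mid u \cdot y \geq x \}, \qquad C^-(x) = \{ y \in \omega_n B^n \mid u \cdot y \leq -x \}.
\]
Each of these caps has volume $V_n(x)$ in the notation of Section \ref{euclidean_balls}, and their Euclidean distance equals $2x$. Because $V_n$ is continuous and strictly decreasing on $[0, \omega_n]$ with $V_n(0) = \frac{1}{2}$ and $V_n(\omega_n) = 0$, for every $\varepsilon \in (0, \frac{1}{2})$ there is a unique $x_n > 0$ with $V_n(x_n) = \varepsilon$. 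Setting $A_n = C^+(x_n)$, $B_n = C^-(x_n)$ yields two bodies of volume $\varepsilon$ at distance exactly $2x_n$, so $d_n(\varepsilon) \geq 2 x_n$.

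It then remains to show $\liminf_n x_n \geq -\Psi^{-1}(\varepsilon) = -\frac{1}{\sqrt{e}} \Phi^{-1}(\varepsilon)$, and here I invoke the ingredient already imported in the proof of Theorem \ref{iso_in_ball}: the functions $V_n(x)$ converge uniformly on $[0, \infty)$ to $\Psi(-x) = \Phi(-\sqrt{e} x)$. Write $x^* = -\Psi^{-1}(\varepsilon) > 0$ and fix an arbitrary $\eta \in (0, x^*)$. Since $\Psi$ is strictly increasing we have $\Psi(-(x^* - \eta)) > \varepsilon$, so with $\delta = \Psi(-(x^* - \eta)) - \varepsilon > 0$ the uniform bound gives $V_n(x^* - \eta) > \varepsilon$ for all sufficiently large $n$, and monotonicity of $V_n$ then forces $x_n > x^* - \eta$. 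Since $\eta$ was arbitrary, $\liminf x_n \geq x^*$, which doubled is the desired bound.

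The asymptotic equivalence $-\frac{2}{\sqrt{e}}\Phi^{-1}(\varepsilon) \sim -\frac{2}{\sqrt{\pi e}}\sqrt{-\ln \varepsilon}$ as $\varepsilon \to 0$ is then immediate from the asymptotic of $\Phi^{-1}$ recorded in the introduction (Appendix \ref{asymp_phi_inv}). In contrast to the matching upper bound of Theorem \ref{euc_ball_dist}, which required tracking the evolution of $\delta$-enlargements through the isoperimetric profile, this lower bound is essentially a one-line construction whose only nontrivial input is the uniform convergence of hyperplane-section volumes already available to us, so I do not anticipate any real obstacle.
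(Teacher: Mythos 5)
Your construction is correct and is essentially the paper's own argument: the paper likewise takes two symmetric hyperplane caps of $\omega_n B^n$ at distance $2a$ for any $a < -\frac{1}{\sqrt{e}}\Phi^{-1}(\varepsilon)$ and shows their volumes exceed $\varepsilon$ for large $n$ using theorem 1 of \cite{ball_section}, phrased there as total-variation convergence of $\sqrt{n}\,\omega_n^{-1}X_1$ to the standard normal rather than as uniform convergence of $V_n(x)$ to $\Psi(-x)$, which is the same ingredient in different packaging. Your variant of solving $V_n(x_n)=\varepsilon$ exactly versus the paper's fixed $a$ is an immaterial difference, so there is nothing to add.
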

	\begin{proof}
		In the unit-volume $n$-ball $\omega_n B^n \subset \mathbb{R}^n$, where the radii is
		\[
			\omega_n = \frac{\Gamma\left(\frac{n}{2} + 1\right)^{\frac{1}{n}}}{\sqrt{\pi}} \sim \sqrt{\frac{n}{2\pi e}},
		\]
		consider the diagonal from $(-\omega_n, 0, \ldots, 0)$ to $(\omega_n, 0, \ldots, 0)$, i.\,e. a diagonal corresponding to the $X_1$-axis. We will be interested in the hyperplanes orthogonal to this diagonal, i.\,e. hyperplanes defined by $X_1 = t$.

		Pick a number $a$ such that
		\[
			\frac{1}{\sqrt{e}} \Phi^{-1}(\varepsilon) < -a < 0
		\]
		If we consider the uniform probability distribution on $\omega_n B^n$, then we could think of $X_1$ as of a random variable. We would like to consider the part of our ball that corresponds to $X_1 \leq -a$. The volume would be equal to
		\[
			\Pr(X_1 \leq -a) = \Pr(\sqrt{n} \omega_n^{-1} X_1 \leq -\sqrt{n} \omega_n^{-1} a)
		\]

		By theorem 1 of \cite{ball_section} as $n$ tends to infinity the distribution of $n^{\frac{1}{2}} \omega_n^{-1} X_1$ converges in total variation to the standard normal distribution on $\mathbb{R}$, whose probability density function is
		\[
			\frac{1}{\sqrt{2\pi}} e^{-\frac{1}{2}x^2}
		\]

		Furthermore, note that
		\[
			\lim_{n \to \infty} -\sqrt{n} \omega_n^{-1} a = \lim_{n \to \infty} -\sqrt{n} \frac{\sqrt{2\pi e}}{\sqrt{n}} a = -\sqrt{2 \pi e} a > \sqrt{2\pi} \Phi^{-1}(\varepsilon)
		\]
		So for all sufficiently large $n$ we shall have
		\begin{multline*}
			\sqrt{2\pi} \Phi^{-1}(\varepsilon) + \delta < -\sqrt{n} \omega_n^{-1} a \\ \Rightarrow \Pr(\sqrt{n} \omega_n^{-1} X_1 \leq -\sqrt{n} \omega_n^{-1} a) \geq \Pr(\sqrt{n} \omega_n^{-1} X_1 \leq \sqrt{2\pi} \Phi^{-1}(\varepsilon) + \delta)
		\end{multline*}
		for some $\delta > 0$. Because distribution of $n^{\frac{1}{2}} \omega_n^{-1} X_1$ converges in total variation to the standard normal distribution, for all sufficiently large $n$ we have
		\begin{multline*}
			\Pr(X_1 \leq -a) \geq \Pr(\sqrt{n} \omega_n^{-1} X_1 \leq \sqrt{2\pi} \Phi^{-1}(\varepsilon) + \delta) \\ \geq \frac{1}{\sqrt{2\pi}} \int_{-\infty}^{\sqrt{2\pi} \Phi^{-1}(\varepsilon)} e^{-\frac{1}{2}x^2} dx = \int_{-\infty}^{\sqrt{2\pi} \Phi^{-1}(\varepsilon)} e^{-\pi \left(\frac{1}{\sqrt{2\pi}}x\right)^2} d\left(\frac{1}{\sqrt{2\pi}} x \right) \\ = \int_{-\infty}^{\Phi^{-1}(\varepsilon)} e^{-\pi x^2} dx = \varepsilon
		\end{multline*}

		By symmetry we have a similar result for the part of our ball defined by $X_1 \geq a$. That means that for sufficiently large $n$ we are going to have two subsets of volume at least $\varepsilon$ at a distance $2a$. But $a$ was chosen as an arbitrary number lesser than
		\[
			-\frac{1}{\sqrt{e}} \Phi^{-1}(\varepsilon),
		\]
		from which the statement of the theorem follows.
	\end{proof}

	By combining this with Theorem \ref{euc_ball_dist} we get
	\begin{theorem}
		\label{ball_exact}
		When $K_n$ are the unit-volume euclidean balls
		\[
			\lim_{n \to \infty} d_n(\varepsilon) = -2\frac{1}{\sqrt{e}} \Phi^{-1}(\varepsilon)
		\]
	\end{theorem}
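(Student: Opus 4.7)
The plan is to observe that this theorem is the immediate combination of the two bounds already proved: Theorem \ref{euc_ball_dist} gives the upper estimate
\[
	\limsup_{n \to \infty} d_n(\varepsilon) \leq -2\frac{1}{\sqrt{e}} \Phi^{-1}(\varepsilon),
\]
while Theorem \ref{ball_sup} provides the matching lower estimate
\[
	\liminf_{n \to \infty} d_n(\varepsilon) \geq -2\frac{1}{\sqrt{e}} \Phi^{-1}(\varepsilon).
\]
Since $\liminf_{n \to \infty} d_n(\varepsilon) \leq \limsup_{n \to \infty} d_n(\varepsilon)$ holds tautologically, these two inequalities force equality throughout, so that both the $\liminf$ and $\limsup$ coincide with $-2\frac{1}{\sqrt{e}} \Phi^{-1}(\varepsilon)$. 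In particular, the limit exists and has the claimed value.

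Thus the proof I would write is just a two-line sandwich invoking Theorems \ref{euc_ball_dist} and \ref{ball_sup}, with no additional work required. There is no genuine obstacle here: the substantive content, namely the sharp isoperimetric estimate inside $\omega_n B^n$ via Theorem \ref{iso_in_ball} that drives the upper bound, and the hyperplane slicing argument using the asymptotic normality of $\sqrt{n}\omega_n^{-1} X_1$ from \cite{ball_section} that drives the lower bound, has already been carried out in the cited theorems.
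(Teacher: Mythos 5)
Your proposal is correct and matches the paper's own argument exactly: the paper states Theorem \ref{ball_exact} as the immediate combination of the upper bound from Theorem \ref{euc_ball_dist} and the lower bound from Theorem \ref{ball_sup}, precisely the two-line sandwich you describe. No further work is needed.
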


	\begin{theorem}
		\label{cubes}
		When $K_n$ are the unit cubes we have
		\[
			\liminf_{n \to \infty} d_n(\varepsilon) \geq -2\sqrt{\frac{\pi}{6}} \Phi^{-1}(\varepsilon)
		\]
		The function $-2\sqrt{\frac{\pi}{6}} \Phi^{-1}(\varepsilon)$ is asymptotically equivalent to
		\[
			\frac{2}{\sqrt{6}}\sqrt{- \ln \varepsilon}
		\]
		as $\varepsilon \to 0$. 
	\end{theorem}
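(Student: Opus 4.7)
The plan is to follow the same template as the proof of Theorem \ref{ball_sup}, replacing the role of the coordinate axis $X_1$ with the long diagonal of the cube and the thin-slice asymptotics of the ball with the classical central limit theorem for i.i.d.\ uniforms. Let $X_1, \ldots, X_n$ be i.i.d.\ uniform on $[0;1]$, so that $X = (X_1, \ldots, X_n)$ realizes the Lebesgue measure on $(0;1)^n$. For $c > 0$, the two half-cube sections
\[
    A = \left\{x \in (0;1)^n : \textstyle\sum_i x_i \le \tfrac{n}{2} - c\right\}, \qquad B = \left\{x \in (0;1)^n : \textstyle\sum_i x_i \ge \tfrac{n}{2} + c\right\}
\]
are bounded by parallel hyperplanes with common unit normal $(1, \ldots, 1)/\sqrt{n}$, so $\operatorname{dist}(A, B) = 2c/\sqrt{n}$.

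To keep this distance bounded away from zero as $n \to \infty$, I would fix $a$ with $0 < a < -\sqrt{\pi/6}\,\Phi^{-1}(\varepsilon)$ and take $c = a\sqrt{n}$, so that $\operatorname{dist}(A, B) = 2a$ uniformly in $n$. Since $\mathbb{E} X_i = 1/2$ and $\operatorname{Var} X_i = 1/12$, the normalized partial sum $Z_n = \frac{\sum X_i - n/2}{\sqrt{n/12}}$ converges in distribution to $N(0, 1)$ by the central limit theorem, and hence
\[
    \mu(A) = \Pr\!\left(Z_n \le -a\sqrt{12}\right) \;\longrightarrow\; \frac{1}{\sqrt{2\pi}} \int_{-\infty}^{-a\sqrt{12}} e^{-t^2/2}\, dt.
\]
The bookkeeping step is to rewrite this limit in terms of the paper's $\Phi$: the substitution $t = \sqrt{2\pi}\, s$ identifies the density $\frac{1}{\sqrt{2\pi}} e^{-t^2/2}$ with $e^{-\pi s^2}$, and the limit equals $\Phi\!\left(-a\sqrt{6/\pi}\right)$. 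By the choice of $a$ we have $-a\sqrt{6/\pi} > \Phi^{-1}(\varepsilon)$, so this limit strictly exceeds $\varepsilon$; therefore $\mu(A) > \varepsilon$ for all sufficiently large $n$, and $\mu(B) > \varepsilon$ too by the central symmetry $x \mapsto \mathbf{1} - x$ of the cube.

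Choosing any measurable subsets of exact volume $\varepsilon$ inside $A$ and $B$ gives a pair of bodies in $(0;1)^n$ at distance at least $2a$, so $\liminf_{n \to \infty} d_n(\varepsilon) \ge 2a$. Letting $a \to -\sqrt{\pi/6}\,\Phi^{-1}(\varepsilon)$ from below yields the claimed lower bound, and the asymptotic $-2\sqrt{\pi/6}\,\Phi^{-1}(\varepsilon) \sim \frac{2}{\sqrt{6}}\sqrt{-\ln \varepsilon}$ is immediate from the asymptotic of $\Phi^{-1}$ recalled in the introduction. I do not expect any real obstacle here: the argument reduces to a single CLT application, and the only subtlety is carefully tracking the factor of $\sqrt{2\pi}$ that relates the paper's $\Phi$ to the standard normal CDF in order to land on the exact constant $\sqrt{\pi/6}$.
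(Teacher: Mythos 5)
Your proposal is correct and is essentially the paper's own argument: the sets $\{\sum x_i \le n/2 - a\sqrt{n}\}$ and $\{\sum x_i \ge n/2 + a\sqrt{n}\}$ are exactly the paper's slabs cut by hyperplanes orthogonal to the main diagonal, the CLT with $\operatorname{Var}X_i = \tfrac{1}{12}$ plays the same role, and the rescaling $t = \sqrt{2\pi}\,s$ recovering $\Phi\left(-a\sqrt{6/\pi}\right) > \varepsilon$ matches the paper's computation. No gaps; the only difference is presentational (partial sums versus projection onto the diagonal).
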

	\begin{proof}
		By the main diagonal of a cube $(0; 1)^n$ we mean a segment from the origin to $(1, \ldots, 1)$. We would be considering hyperplanes orthogonal to the main diagonal. 

		For each point on the main diagonal we could consider the area of the corresponding orthogonal hyperplane section of $(0; 1)^n$. This gives rise to a probability distribution on the segment from $(0, \ldots, 0)$ to $(1, \ldots, 1)$, whose length is $\sqrt{n}$. We will take the midpoint of this segment as the origin, i.\,e. we have a probability distribution on $[-\frac{\sqrt{n}}{2}; +\frac{\sqrt{n}}{2}]$.

		The variance of the uniform distribution on the segment $[0; 1]$ is equal to
		\[
			\sigma^2 = \int_{0}^{1} \left(x - \frac{1}{2}\right)^2 dx = 2 \int_{0}^{\frac{1}{2}} x^2 dx = 2 \frac{1}{3} \frac{1}{2^3} = \frac{1}{12}
		\]
		And our distribution on $[-\frac{\sqrt{n}}{2}; +\frac{\sqrt{n}}{2}]$ could be produced by $n$ random variables $X_1, \ldots, X_n$ uniformly distributed on $[0; 1]$ as
		\[
			\sqrt{n}\left(\frac{X_1 + \ldots + X_n}{n} - \frac{1}{2} \right)
		\]
		So by central limit theorem as $n$ goes to infinity our distribution converges to a normal distribution $\mathcal{N}(0, \sigma^2)$, whose probability density function would be
		\[
			\frac{1}{\sigma \sqrt{2\pi}} e^{-\frac{1}{2}\frac{x^2}{\sigma^2}} = \sqrt{\frac{6}{\pi}}e^{-6 x^2}
		\]

		Pick a number $a$ such that
		\[
			\sqrt{\frac{\pi}{6}} \Phi^{-1}(\varepsilon) < -a < 0
		\]
		And consider the part of the unit cube $(0; 1)^n$ whose orthogonal projection on the main diagonal lies inside $[0; \frac{1}{2}\sqrt{n} - a]$, i.\,e. a certain hyperplane section. As $n$ goes to infinity the volume of this region would converge to the value of the cumulative distribution function of $\mathcal{N}(0, \sigma^2)$ at $-a$, which is equal to
		\begin{multline*}
			\int_{-\infty}^{-a} \sqrt{\frac{6}{\pi}}e^{-6 x^2} dx = \int_{-\infty}^{-a} e^{-\pi \left(\sqrt{\frac{6}{\pi}} x\right)^2} d\left(\sqrt{\frac{6}{\pi}} x\right) \\ = \int_{-\infty}^{-\sqrt{\frac{6}{\pi}} a} e^{-\pi x^2} dx = \Phi\left(-\sqrt{\frac{6}{\pi}} a\right) > \Phi\left(\sqrt{\frac{6}{\pi}} \sqrt{\frac{\pi}{6}} \Phi^{-1}(\varepsilon) \right) = \varepsilon
		\end{multline*}
		Thus for large enough $n$ a part of volume at least $\varepsilon$ is going to be cut off. By symmetry the same is true for an orthogonal hyperplane section of our cube corresponding to $[\frac{1}{2}\sqrt{n} + a; \sqrt{n}]$ on the main diagonal.

		So for $n$ large enough we get two subsets of volume at least $\varepsilon$ at a distance $2a$. But $a$ was chosen as an arbitrary number smaller than
		\[
			-\sqrt{\frac{\pi}{6}} \Phi^{-1}(\varepsilon),
		\]
		from which the statement of the theorem follows.
	\end{proof}

	\begin{theorem}
		\label{simplex}
		When $K_n$ are the unit-volume simplexes we have
		\[
			\liminf_{n \to \infty} d_n(\varepsilon) \geq -\frac{\sqrt{2}}{e} \ln(2\varepsilon).
		\]
	\end{theorem}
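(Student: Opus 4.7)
The plan is to mirror the hyperplane-cut constructions of Theorems \ref{ball_sup} and \ref{cubes}: I will exhibit two explicit subsets of $\omega_n \Delta_n$ of volume at least $\varepsilon$, between which the Euclidean distance is almost $-\frac{\sqrt{2}}{e}\ln(2\varepsilon)$. The crucial new ingredient that produces the $\sqrt{2}$ factor is to cut along the linear functional $f(x) = x_1 - x_2$ rather than along a single coordinate, since $\nabla f = e_1 - e_2$ has norm $\sqrt{2}$, making $f$ a $\sqrt{2}$-Lipschitz function on $\mathbb{R}^n$. Fix any $a$ with $0 < a < -\tfrac{1}{e}\ln(2\varepsilon)$ and set
\[
A = \{x \in \omega_n \Delta_n : x_1 - x_2 \leq -a\}, \qquad B = \{x \in \omega_n \Delta_n : x_1 - x_2 \geq a\}.
\]
For any $\alpha \in A$ and $\beta \in B$ we have $|f(\alpha) - f(\beta)| \geq 2a$, so Lipschitzness immediately gives $\|\alpha - \beta\|_2 \geq 2a/\sqrt{2} = \sqrt{2}\,a$, and hence $\operatorname{dist}(A, B) \geq \sqrt{2}\,a$.

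What remains is to verify that both $A$ and $B$ have volume at least $\varepsilon$ for all sufficiently large $n$; by the symmetry $x_1 \leftrightarrow x_2$ it suffices to treat $A$. Since $\omega_n \Delta_n$ has unit volume, its $(n-1)$-dimensional Lebesgue measure coincides with the uniform probability, which by Lemma \ref{expon_into_delta} can be realised as the law of $Y = \omega_n(X_1/S, \ldots, X_n/S)$ with $X_1, \ldots, X_n$ i.i.d.\ $\mathrm{Exp}(1)$ and $S = X_1 + \cdots + X_n$. Therefore
\[
\mathrm{vol}(A) = \Pr\!\left(\frac{\omega_n(X_1 - X_2)}{S} \leq -a\right).
\]
Writing $\omega_n(X_1 - X_2)/S = (\omega_n/n)\cdot(n/S)\cdot(X_1 - X_2)$, Stirling gives $\omega_n/n \to 1/e$ while the weak law of large numbers gives $n/S \to 1$ in probability; Slutsky's theorem (applied with the constant limit $1/e$, for which joint independence of $X_1 - X_2$ and $S$ is not required) then yields $\omega_n(X_1 - X_2)/S \Rightarrow (X_1 - X_2)/e$ in distribution.

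The variable $X_1 - X_2$ has the Laplace density $\tfrac{1}{2}e^{-|t|}$, so $\Pr((X_1 - X_2)/e \leq -a) = \tfrac{1}{2}e^{-ea}$, and by the choice of $a$ this quantity strictly exceeds $\varepsilon$. Hence $\mathrm{vol}(A) > \varepsilon$ for all sufficiently large $n$; selecting subsets of $A$ and $B$ of volume exactly $\varepsilon$ yields $d_n(\varepsilon) \geq \sqrt{2}\,a$, and letting $a \uparrow -\tfrac{1}{e}\ln(2\varepsilon)$ gives the claimed bound. The one genuinely technical step is the distributional convergence, but it is routine once one observes that $\omega_n/S$ is asymptotically the deterministic constant $1/e$; the rest reduces to the Lipschitz inequality for $x_1 - x_2$ and the explicit form of the Laplace tail.
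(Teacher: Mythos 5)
Your proof is correct, but it follows a genuinely different route than the paper. The paper's argument is purely geometric and exact: it takes an edge $PQ$ of $\omega_n\Delta_n$ (of length $\sqrt{2}\,\omega_n$), bisects the simplex by the hyperplane through the midpoint of $PQ$ orthogonal to it, and applies homotheties with ratio $\alpha=(2\varepsilon^{\prime})^{\frac{1}{n-1}}$ centered at $P$ and at $Q$ to the two halves; each image has volume exactly $\tfrac{1}{2}\alpha^{n-1}=\varepsilon^{\prime}>\varepsilon$, the two images are at distance $\sqrt{2}\,\omega_n(1-\alpha)$, and an elementary limit gives $-\tfrac{\sqrt{2}}{e}\ln(2\varepsilon^{\prime})$, after which $\varepsilon^{\prime}\downarrow\varepsilon$. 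You instead stay within the hyperplane-cut template of Theorems \ref{ball_sup} and \ref{cubes}: you cut along the $\sqrt{2}$-Lipschitz functional $x_1-x_2$ (an edge direction), get the distance bound $\sqrt{2}\,a$ for free, and estimate the volumes probabilistically via Lemma \ref{expon_into_delta}, the law of large numbers and Slutsky, identifying the limiting marginal as a Laplace law scaled by $1/e$ with tail $\tfrac{1}{2}e^{-ea}$; the threshold $a<-\tfrac{1}{e}\ln(2\varepsilon)$ then reproduces the same constant. All the steps check out: Slutsky with a constant limit needs no independence, the limiting distribution is continuous so convergence of the probabilities at level $-a$ is legitimate, and passing to subsets of volume exactly $\varepsilon$ only increases the distance. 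What the paper's construction buys is complete self-containedness (exact volumes, no limit theorems, valid for every $n$ rather than only asymptotically); what yours buys is uniformity with the other lower-bound proofs and a conceptual explanation of the $-\ln\varepsilon$ scaling — the one-dimensional marginal of the simplex along an edge direction is asymptotically Laplace rather than Gaussian, which is exactly the ``degenerate direction'' phenomenon alluded to in the paper's concluding remarks after Theorem \ref{general_sup}.
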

	\begin{proof}
		The volume of the unit simplex $\Delta_n$ defined by
		\[
			\Delta_n = \{ (x_1, \ldots, x_n) \in \mathbb{R}_+^n \mid x_1 + \ldots + x_n = 1 \}
		\]
		is equal to
		\[
			\frac{n \sqrt{n}}{n!}
		\]
		
		If we set
		\[
			\omega_n = \left(\frac{n!}{n\sqrt{n}}\right)^{\frac{1}{n - 1}} \sim \frac{n}{e},
		\]
		then $\omega_n \Delta_n$ will be a regular unit-volume simplex, whose side length is
		\[
			\sqrt{2} \omega_n
		\]

		Now consider any number $\varepsilon^{\prime}$ such that
		\[
			\varepsilon < \varepsilon^{\prime} < \frac{1}{2}
		\]
		Let $P$ and $Q$ be two vertices of $\omega_n \Delta_n$. Hyperplane passing through the midpoint of the side $PQ$ and orthogonal to it divides our simplex into two parts of equal volume. We set
		\[
			\alpha = (2\varepsilon^{\prime})^{\frac{1}{n - 1}}.
		\]
		Consider the image of the part containing $P$ after a homotethy with center at $P$ and coefficient $\alpha$, the resulting subset of our simplex will be of volume $\frac{1}{2} \alpha^{n - 1} = \varepsilon^{\prime} > \varepsilon$. Analogously, we construct a subset of volume $\varepsilon^{\prime}$ corresponding to the vertex $Q$.

		Note that we have two homotethies with coefficient $\alpha$ applied to the halves of $PQ$. The distance between our subsets will be equal to
		\[
			\sqrt{2} \omega_n \left(1 - \alpha\right) = \sqrt{2} \omega_n \left(1 - (2\varepsilon^{\prime})^{\frac{1}{n - 1}}\right)
		\]
		Now we take limit
		\begin{multline*}
			\lim_{n \to \infty} \sqrt{2} \omega_n \left(1 - (2\varepsilon^{\prime})^{\frac{1}{n - 1}}\right) = \lim_{n \to \infty} \frac{\sqrt{2}}{e} n \left(1 - (2\varepsilon^{\prime})^{\frac{1}{n - 1}}\right) \\ = \lim_{n \to \infty} -\frac{\sqrt{2}}{e} \frac{n}{n - 1} \frac{(2\varepsilon^{\prime})^0 - (2\varepsilon^{\prime})^\frac{1}{n - 1}}{0 - \frac{1}{n - 1}} = -\frac{\sqrt{2}}{e} \frac{d}{dt} (2\varepsilon^{\prime})^t\Big|_{t = 0} = -\frac{\sqrt{2}}{e} \ln(2\varepsilon^{\prime})
		\end{multline*}

		In other words, for each $n$ we have two subsets of $\omega_n \Delta_n$ of volume at least $\varepsilon$, and the distance between them tends to
		\[
			-\frac{\sqrt{2}}{e} \ln(2\varepsilon^{\prime})
		\]
		as $n$ goes to infinity. But $\varepsilon^{\prime}$ was chosen as an arbitrary number from $(\varepsilon; \frac{1}{2})$, from which the statement of the theorem follows.
	\end{proof}

	\begin{theorem}
		\label{l_p_balls}
		When $K_n$ are the unit-volume $\ell_p$ balls for $p \in [1; 2]$ we have
		\[
			\liminf_{n \to \infty} d_n(\varepsilon) \geq -2\Psi_p^{-1}(\varepsilon),
		\]
		where function $-2\Psi_p^{-1}(\varepsilon)$(see Appendix \ref{v_n_s_n}) is asymptotically equivalent to
		\[
			\frac{1}{e^{\frac{1}{p}} \Gamma\left(1 + \frac{1}{p}\right)} (-\ln \varepsilon)^{\frac{1}{p}}
		\]
		as $\varepsilon \to 0$.
	\end{theorem}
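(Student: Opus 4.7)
The plan is to mirror the argument of Theorem \ref{ball_sup}, replacing the euclidean ball with the unit-volume $\ell_p$ ball and invoking the limiting distribution of the first coordinate that is already assembled in Appendix \ref{v_n_s_n}. Concretely, work inside the unit-volume body $\omega_n \ell_p^n$ and consider the family of hyperplanes orthogonal to the $X_1$-axis. For $x \geq 0$ let $V_n(x)$ denote the volume of the cap $\{X_1 \geq x\} \cap \omega_n \ell_p^n$; the unit $\ell_p^n$ ball is symmetric under the reflection $X_1 \mapsto -X_1$, so the cap $\{X_1 \leq -x\} \cap \omega_n \ell_p^n$ has the same volume $V_n(x)$, and the two caps are clearly at distance $2x$.

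Next I would appeal directly to Appendix \ref{v_n_s_n}, which supplies exactly the statement we need: the sequence of functions $V_n(x)$ converges uniformly on compact sets to $\Psi_p(-x)$, where $\Psi_p$ is the function whose asymptotics are recorded in the theorem statement. This plays the role here that theorem $1$ of \cite{ball_section} played in the euclidean-ball proof.

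Having this, the derivation of the lower bound is short. Fix any $a$ with $\Psi_p^{-1}(\varepsilon) < -a < 0$; since $\Psi_p$ is increasing with $\Psi_p(0) = \tfrac{1}{2}$, this gives $\Psi_p(-a) > \varepsilon$. By the uniform convergence, for all sufficiently large $n$ one has
\[
V_n(a) \geq \Psi_p(-a) - \tfrac{1}{2}(\Psi_p(-a) - \varepsilon) > \varepsilon.
\]
The two caps $\{X_1 \geq a\} \cap \omega_n \ell_p^n$ and $\{X_1 \leq -a\} \cap \omega_n \ell_p^n$ therefore both have volume greater than $\varepsilon$, and shrinking each of them to a measurable subset of volume exactly $\varepsilon$ (say by intersecting with a further half-space orthogonal to $X_1$) yields two bodies of volume $\varepsilon$ at distance at least $2a$. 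Hence $d_n(\varepsilon) \geq 2a$ for all sufficiently large $n$, giving $\liminf_{n \to \infty} d_n(\varepsilon) \geq 2a$; since $a$ was arbitrary below $-\Psi_p^{-1}(\varepsilon)$ the desired inequality $\liminf_{n \to \infty} d_n(\varepsilon) \geq -2\Psi_p^{-1}(\varepsilon)$ follows, and the asymptotic formula is the one recorded in Appendix \ref{v_n_s_n}.

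The only non-routine step is the uniform convergence $V_n(x) \to \Psi_p(-x)$, but this is precisely what is supplied in Appendix \ref{v_n_s_n} (and is the $\ell_p$-analogue of the central-limit-type behaviour of a uniformly chosen point in $\omega_n \ell_p^n$, cf.\ the calculation in Theorem \ref{ball_sup}). Once that input is taken as given, no other obstacle arises: the symmetry under coordinate reflection of $\ell_p^n$ is immediate, and the passage from ``volume $>\varepsilon$'' to ``volume $=\varepsilon$'' only weakens the distance bound in a trivial way.
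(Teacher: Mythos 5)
Your proposal is correct and follows essentially the same route as the paper: hyperplane caps orthogonal to the $X_1$-axis, symmetry of $\omega_n\ell_p^n$ under $X_1 \mapsto -X_1$, and the uniform convergence $V_n(x) \to \Psi_p(-x)$ from Appendix \ref{v_n_s_n} to guarantee $V_n(a) > \varepsilon$ for large $n$, with $a$ arbitrary below $-\Psi_p^{-1}(\varepsilon)$. The only difference is that you spell out the trimming of the caps to volume exactly $\varepsilon$, which the paper leaves implicit.
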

	\begin{proof}
		In the unit-volume $\ell_p$ ball $\omega_n \ell_p^n$, where
		\[
			\omega_n = \frac{\Gamma\left(1 + \frac{n}{p}\right)^{\frac{1}{n}}}{2\Gamma\left(1 + \frac{1}{p}\right)} \sim \frac{n^{\frac{1}{p}}}{2\Gamma\left(1 + \frac{1}{p}\right)(pe)^{\frac{1}{p}}},
		\]
		consider the segment from $(-\omega_n, 0, \ldots, 0)$ to $(\omega_n, 0, \ldots, 0)$. We will be considering the hyperplanes orthogonal to this segment, i.\,e.\,hyperplanes defined by $X_1 = t$. By $V_n(t)$ denote the function that measures the volume of the part given by $X_1 \geq t$.

		We use notation from Appendix \ref{v_n_s_n}, where it was established that functions $V_n(x)$ uniformly converge to $\Psi_p(-x)$ on $(-\infty; +\infty)$. Pick an arbitrary number $a$ such that
		\[
			\Psi_p^{-1}(\varepsilon) < -a < 0
		\]
		Then for all sufficiently large $n$ we shall have
		\[
			V_n(a) > \varepsilon
		\]
		Because of symmetry, for all sufficiently large $n$ we would have two bodies in $\omega_n \ell_p^n$ of volume at least $\varepsilon$ at a distance at least $2a$. And, since the choice of $a$ above was arbitrary, we would have
		\[
			\liminf_{n \to \infty} d_n(\varepsilon) \geq -2\Psi_p^{-1}(\varepsilon)
		\]
	\end{proof}

	Assume that we have a family $K_n$ of bounded unit-volume centrally symmetric bodies that are not necessarily convex, where the origin will be the center of symmetry for each $K_n$. By $\mu_n$ denote the uniform probability measure on $K_n$.

	Fix $\varepsilon \in (0; \frac{1}{2})$. Since $K_n$ is bounded, it has a finite diameter, thus for each $K_n$ we could consider $d_n(\varepsilon)$ -- the supremum of all possible distances between two subsets of $K_n$, whose volume is $\varepsilon$.

	Turns out that a simple averaging argument gives us.
	\begin{theorem}
		\label{general_sup}
		For every $\varepsilon \in (0; \frac{1}{2})$
		\[
			\liminf_{n \to \infty} d_n(\varepsilon) \geq -2\frac{1}{\sqrt{e}} \Phi^{-1}(\varepsilon),
		\]
		where function $-2\frac{1}{\sqrt{e}} \Phi^{-1}(\varepsilon)$ is asymptotically equivalent to
		\[
			-2\frac{1}{\sqrt{\pi e}} \sqrt{-\ln \varepsilon}
		\]
		as $\varepsilon \to 0$.
	\end{theorem}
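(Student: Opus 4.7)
The plan is a direction-averaging argument that compares the general body $K_n$ to the centered unit-volume euclidean ball $B^*_n$ (of radius $\omega_n$) via an elementary rearrangement inequality. Let $X$ be a uniform random point in $K_n$ and let $u$ be independent and uniform on $\mathbb{S}^{n-1}$. For every unit vector $u_0$ and every $a > 0$, by the central symmetry of $K_n$ the two half-space slices
\[
    \{x \in K_n : \langle x, u_0 \rangle \leq -a\}, \quad \{x \in K_n : \langle x, u_0 \rangle \geq a\}
\]
have equal $\mu_n$-measure and lie at Euclidean distance at least $2a$; hence $d_n(\varepsilon) \geq 2a$ whenever one of them has measure at least $\varepsilon$. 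Since
\[
    \mathbb{P}(\langle X, u \rangle \leq -a) = \mathbb{E}_u\bigl[\mu_n(\{x \in K_n : \langle x, u \rangle \leq -a\})\bigr]
\]
by Fubini, there exists a direction $u_0$ with $\mu_n(\{x : \langle x, u_0 \rangle \leq -a\}) \geq \mathbb{P}(\langle X, u \rangle \leq -a)$, so it suffices to prove $\liminf_{n \to \infty} \mathbb{P}(\langle X, u \rangle \leq -a) \geq \varepsilon$ for every $a < -\Phi^{-1}(\varepsilon)/\sqrt{e}$.

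Conditioning on $X$ and using rotational invariance of the law of $u$, the conditional probability equals $F_n(\|X\|)$, where $F_n(r) = \mathbb{P}(\cos\theta \leq -a/r)$ and $\cos\theta$ is distributed as the first coordinate of a uniform point on $\mathbb{S}^{n-1}$. The function $F_n$ is non-decreasing in $r$: it vanishes for $r \leq a$, and for $r > a$ the threshold $-a/r$ moves toward $0$ as $r$ grows, so its value under the CDF of $\cos\theta$ increases. Next I would invoke the basic rearrangement inequality: for any $K \subset \mathbb{R}^n$ of unit volume and any non-decreasing $g \colon [0,\infty) \to \mathbb{R}$,
\[
    \int_K g(\|x\|)\, dx \geq \int_{B^*_n} g(\|x\|)\, dx,
\]
because $|K \setminus B^*_n| = |B^*_n \setminus K|$ while $\|x\| \geq \omega_n$ on the first piece and $\|x\| \leq \omega_n$ on the second. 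Applying this with $g = F_n$ yields
\[
    \mathbb{P}(\langle X, u \rangle \leq -a) \geq \mathbb{P}(\langle X', u \rangle \leq -a),
\]
where $X'$ is uniform on $B^*_n$.

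By the rotational invariance of $B^*_n$, the right-hand side equals the volume of $B^*_n \cap \{x_1 \leq -a\}$, which is precisely the quantity already analyzed in the proof of Theorem \ref{ball_sup}: it converges to $\Phi(-\sqrt{e}\, a)$ as $n \to \infty$. For any $a \in \bigl(0,\, -\Phi^{-1}(\varepsilon)/\sqrt{e}\bigr)$ this limit strictly exceeds $\varepsilon$, so for all sufficiently large $n$ a favorable direction $u_0$ exists and $d_n(\varepsilon) \geq 2a$; letting $a \to -\Phi^{-1}(\varepsilon)/\sqrt{e}$ from below gives the claim. The main technical point to verify cleanly is the monotonicity of $F_n$ together with the rearrangement step, since that is exactly the mechanism that converts the sharp hyperplane-section estimate already known for the euclidean ball into a lower bound valid for every unit-volume centrally symmetric body.
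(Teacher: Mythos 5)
Your proposal is correct, and it shares the paper's overall skeleton: both proofs average the slice measure $\mu_n(\{\langle x,u\rangle\le -a\})$ over $u\in S^{n-1}$, switch the order of integration, observe that the resulting integrand depends on $x$ only through a non-decreasing function of $\Vert x\Vert_2$ (your $F_n$, the paper's $\Psi_n$), and feed in the total-variation convergence of $\sqrt{n}\,\langle x/\Vert x\Vert_2,u\rangle$ to the standard Gaussian. Where you genuinely diverge is the finishing comparison. The paper bounds $\int_{K_n}\Psi_n(-\sqrt{n}d/\Vert x\Vert_2)\,d\mu_n$ from below by discarding the part of $K_n$ inside the shrunken ball $\alpha\omega_n B^n$ (of volume $\alpha^n\to 0$) and freezing the integrand at $\Vert x\Vert_2=\alpha\omega_n$; this costs a factor $\alpha$ in the threshold, which is then recovered by choosing $\alpha$ close to $1$ and redoing the limit computation. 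You instead invoke the bathtub-type rearrangement inequality $\int_{K}g(\Vert x\Vert_2)\,dx\ge\int_{\omega_n B^n}g(\Vert x\Vert_2)\,dx$ for non-decreasing $g$ and equal unit volumes (your justification via $|K\setminus B^*_n|=|B^*_n\setminus K|$ and the threshold value $g(\omega_n)$ is sound, and $F_n$ is indeed non-decreasing and bounded), which reduces the general body exactly to the centered unit-volume ball and lets you quote the cap-volume limit $\Phi(-\sqrt{e}\,a)$ already computed for Theorem \ref{ball_sup}. Your route avoids the auxiliary parameter $\alpha$ and makes transparent the sense in which the euclidean ball is extremal for the averaged slice measure; the paper's route is more self-contained at this point but needs the extra limiting argument in $\alpha$. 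The remaining steps (central symmetry giving the mirror slice, distance $\ge 2a$ between the two half-space pieces, passing from measure at least $\varepsilon$ to subsets of measure exactly $\varepsilon$, and letting $a\uparrow -\Phi^{-1}(\varepsilon)/\sqrt{e}$) match the paper and are fine.
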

	\begin{proof}
		Consider a number $d$ such that
		\[
			\frac{1}{\sqrt{e}}\Phi^{-1}(\varepsilon) < -d < 0
		\]
		We would like to show that for all sufficiently large $n$ there would be a direction defined by a unit vector $u$ on the sphere $S^{n - 1}$ such that the part of our body $K_n$ where $\langle x, u \rangle \leq -d$ would be of volume at least $\varepsilon$. By central symmetry the part where $\langle x, u \rangle \geq d$ would be of the same volume. But then we will be having two subsets of $K_n$ of volume at least $\varepsilon$ at a distance at least $2d$, where $d$ was chosen as an arbitrary number lesser than
		\[
			-\frac{1}{\sqrt{e}} \Phi^{-1}(\varepsilon),
		\]
		from which the statement of the theorem would follow.

		We are looking for a direction $u \in S^{n - 1}$ with
		\[
			\mu_n(\langle x, u \rangle \leq -d) \geq \varepsilon
		\]
		If the average value of $\mu_n(\langle x, u \rangle \leq -d)$ is greater than $\varepsilon$, then such a direction will exist.

		By $\vartheta_n$ denote the uniform probability measure on $S^{n - 1}$. We would like to have
		\begin{equation}
			\label{step_1}
			\int_{S^{n - 1}} \mu_n(\langle x, u \rangle \leq -d) d\vartheta_n \geq \varepsilon
		\end{equation}
		Volume of the hyperplane cut defined by $\langle x, u \rangle \leq -d$ is the integral of the indicator function $[\langle x, u \rangle \leq -d]$ on $K_n$, which allows to rewrite (\ref{step_1}) as
		\[
			\int_{S^{n - 1}} \int_{K_n} [\langle x, u \rangle \leq -d] d\mu_n d\vartheta_n \geq \varepsilon
		\]
		We may switch the order of integration
		\begin{equation}
			\label{step_2}
			\int_{K_n} \left(\int_{S^{n - 1}} [\langle x, u \rangle \leq -d] d\vartheta_n \right) d\mu_n \geq \varepsilon
		\end{equation}
		We would like to consider the integrand of (\ref{step_2}) for $x \neq 0$. First, we rewrite $[\langle x, u \rangle \leq -d]$ as
		\[
			\left[\sqrt{n} \left\langle \frac{x}{\Vert x \Vert_2}, u \right\rangle \leq - \frac{\sqrt{n} d}{\Vert x \Vert_2}\right]
		\]
		Now we could think of $u$ as of a random vector on the unit sphere $S^{n - 1}$. Then
		\[
			\left\langle \frac{x}{\Vert x \Vert_2}, u \right\rangle
		\]
		corresponds to the projection of $u$ on the diameter from $-\frac{x}{\Vert x \Vert_2}$ to $+\frac{x}{\Vert x \Vert_2}$. And thus by theorem 1 from \cite{ball_section} as $n$ goes to infinity the distribution of random variable
		\[
			\sqrt{n} \left\langle \frac{x}{\Vert x \Vert_2}, u \right\rangle
		\]
		converges in total variation to the standard normal distribution.

		Note that the value of
		\[
			\int_{S^{n - 1}} \left[\sqrt{n} \left\langle \frac{x}{\Vert x \Vert_2}, u \right\rangle \leq - \frac{\sqrt{n} d}{\Vert x \Vert_2}\right] d\vartheta_n
		\]
		only depends on $- \frac{\sqrt{n} d}{\Vert x \Vert_2}$. So we define the function
		\[
			\Psi_n(x) = \int_{S^{n - 1}} \left[\sqrt{n} \left\langle \frac{x}{\Vert x \Vert_2}, u \right\rangle \leq x\right] d\vartheta_n
		\]
		As was remarked above, for every $x$
		\[
			\lim_{n \to \infty} \Psi_n(x) = \frac{1}{\sqrt{2\pi}}\int_{-\infty}^{x} e^{-\frac{1}{2}x^2} dx
		\]
		Since functions $\Psi_n$ are monotonic, we should also have
		\[
			\lim_{n \to \infty} \Psi_n(x_n) = \frac{1}{\sqrt{2\pi}}\int_{-\infty}^{x} e^{-\frac{1}{2}x^2} dx
		\]
		for any sequence $x_n$ that tends to $x$ as $n \to \infty$.

		We rewrite (\ref{step_2}) as
		\begin{equation}
			\label{step_3}
			\int_{K_n \setminus \{ 0 \}} \Psi_n\left(-\frac{\sqrt{n} d}{\Vert x \Vert_2}\right) d\mu_n \geq \varepsilon
		\end{equation}
		The function $\Psi_n$ is non-decreasing, thus
		\[
			\Vert x \Vert_2 \geq r > 0 \Rightarrow \Psi_n\left(-\frac{\sqrt{n} d}{\Vert x \Vert_2}\right) \geq \Psi_n\left(-\frac{\sqrt{n} d}{r}\right)
		\]
		So for (\ref{step_3}) condition
		\begin{equation}
			\label{step_4}
			\int_{K_n \setminus B^n_r} \Psi_n\left(-\frac{\sqrt{n} d}{r}\right) d\mu_n \geq \varepsilon
		\end{equation}
		would be sufficient, we may go further and require
		\begin{equation}
			\label{step_5}	
			(1 - V(B^n_r)) \Psi_n\left(-\frac{\sqrt{n} d}{r}\right) \geq \varepsilon,
		\end{equation}
		where $V(B^n_r)$ is the volume of the $n$-ball with radius $r$.

		The volume of the unit $n$-ball $B^n_1$ is
		\[
			\frac{\sqrt{\pi}^n}{\Gamma\left(\frac{n}{2} + 1\right)}.
		\]
		The unit volume corresponds to the radius
		\[
			\omega_n = \frac{\Gamma\left(\frac{n}{2} + 1\right)^{\frac{1}{n}}}{\sqrt{\pi}} \sim \sqrt{\frac{n}{2\pi e}}
		\]
		Pick a number $\alpha \in (0; 1)$ such that
		\begin{equation}
			\label{alpha_cond}
			\alpha \frac{1}{\sqrt{e}} \Phi^{-1}(\varepsilon) < -d
		\end{equation}
		and set
		\[
			r = \alpha \omega_n
		\]

		Inequality (\ref{step_5}) turns into
		\begin{equation}
			\label{step_6}
			(1 - \alpha^n) \Psi_n\left(-\frac{\sqrt{n} d}{\alpha \omega_n}\right) \geq \varepsilon
		\end{equation}
		Note that
		\[
			\lim_{n \to \infty} \frac{\sqrt{n} d}{\alpha \omega_n} = \lim_{n \to \infty} \sqrt{\frac{2\pi e}{n}} \frac{\sqrt{n} d}{\alpha} = \frac{\sqrt{2\pi e} d}{\alpha}
		\]
		Now take limit of the left side of (\ref{step_6}) and apply condition (\ref{alpha_cond})
		\begin{multline*}
			\lim_{n \to \infty} (1 - \alpha^n) \Psi_n\left(-\frac{\sqrt{n} d}{\alpha \omega_n}\right) = \frac{1}{\sqrt{2\pi}} \int_{-\infty}^{-\frac{\sqrt{2\pi e} d}{\alpha}} e^{-\frac{1}{2}x^2} dx \\ = \int_{-\infty}^{-\frac{\sqrt{2\pi e} d}{\alpha}} e^{-\pi \left(\frac{1}{\sqrt{2\pi}} x\right)^2} d\left(\frac{1}{\sqrt{2\pi}} x\right) = \int_{-\infty}^{-\frac{\sqrt{e} d}{\alpha}} e^{-\pi x^2} dx = \Phi\left(-\frac{\sqrt{e}d}{\alpha}\right) \\ > \Phi(\Phi^{-1}(\varepsilon)) = \varepsilon
		\end{multline*}

		Thus for all sufficiently large $n$
		\[
			(1 - \alpha^n) \Psi_n\left(-\frac{\sqrt{n} d}{\alpha \omega_n}\right) > \varepsilon
		\]
		and, consequently,
		\begin{equation}
			\int_{S^{n - 1}} \mu_n(\langle x, u \rangle \leq -d) d\vartheta_n \geq \varepsilon,
		\end{equation}
		which means that for all sufficiently large $n$ the desired direction $u \in S^{n - 1}$ with property
		\[
			\mu_n(\langle x, u \rangle \leq -d) \geq \varepsilon
		\]
		exists.
	\end{proof}

	Theorem \ref{ball_sup} is an immediate corollary of the Theorem \ref{general_sup}.

	\medskip

	\textbf{Conclusions.} Roughly speaking, the results established in \cite{K07}, \cite{KLARTAG2007284} tell us that generally hyperplane sections of convex bodies across arbitrary directions lead to a gaussian distribution, suggesting that the asymptotic behavior of $d_n(\varepsilon)$ different from $\Phi^{-1}(\varepsilon)$ is probably caused by a few degenerate directions. For example, Theorems \ref{simplex} and \ref{simplex_upper} tell us that the asymptotic behavior of $d_n(\varepsilon)$ for simplexes corresponds to a function $-\ln \varepsilon$, but one could also note that the simplex $\Delta_n$ is unusually <<stretched>> in $n$ directions corresponding to its corners, and that almost all of its volume by Theorem \ref{concetration} is concentrated in the euclidean ball of a diameter much smaller than the diameter of $\Delta_n$.

	\newpage

	\section{Discrete isoperimetric problem.}

	\textbf{Introduction.} In this section instead of considering the euclidean distance between two subsets of volume $\varepsilon$ we will be considering the Manhattan distance. We can no longer say that for fixed $\varepsilon$ this distance is bounded, but we will derive a result(Theorem \ref{discrete_cube}) concerning its asymptotic behaviour in the case of the unit cube $[0; 1]^n$. The key idea in the proof of it would be to replace the unit cube $[0; 1]^n$ with a lattice, for which the solution of our problem is already known(see Theorem \ref{discrete_distance}).

	We consider the Manhattan distance $d$ in the unit cube $[0; 1]^n$. We may ask a similar question: what is the largest Manhattan distance between two bodies of volume $\varepsilon > 0$ in the unit cube $[0; 1]^n$?

	Turns out this problem could be dealt with by discretization. Consider a lattice $L = \{\frac{0}{m}, \frac{1}{m}, \ldots, \frac{m}{m}\}^n$ inside $[0; 1]^n$, two points $x$ and $y$ in $L$ are adjacent whenever $d(x, y) = \frac{1}{m}$. By $t$-boundary $A_{(t)}$ of a subset $A \subseteq L$ we mean the set of all points of $L$ that are at a distance at most $\frac{t}{m}$ from $A$. The latter concept is analagous to $\delta$-enlargements, and one could think of $|A_{(1)} \setminus A|$ as an analogue of the surface area. This gives rise to the discrete isoperimetirc problem: how large the $t$-boundary of a set $A \subseteq L$ with fixed size can be? This question was answered in \cite{BOLLOBAS199147}

	\begin{theorem}[{{\cite[Corollary 9]{BOLLOBAS199147}}}]
		Let $A \subset [k]^n$. For any $t = 0, 1, \ldots,$ the $t$-boundary of $A$ is at least as large as the $t$-boundary of the first $|A|$ elements of $[k]^n$ in the simplicial order.
	\end{theorem}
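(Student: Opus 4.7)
My plan is to use the compression technique standard for discrete isoperimetric inequalities. For each ordered pair $(i,j)$ with $i < j$ in $\{1,\ldots,n\}$, define the $(i,j)$-compression $C_{ij}(A)$ fiber-wise: partition $[k]^n$ into two-dimensional fibers obtained by fixing all coordinates outside $\{i,j\}$, and on each such fiber $L$ replace $A \cap L$ by the $|A \cap L|$ first elements of $L$ in the simplicial order restricted to $L$. Cardinality is preserved by construction, and iterating the $C_{ij}$'s in a fixed cyclic order strictly decreases the potential $\sum_{x \in A} \operatorname{rank}(x)$ at every non-trivial step, so the process terminates at a set $A^*$ that is compressed along every coordinate pair simultaneously.

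First I would verify that any such completely compressed $A^*$ must coincide with the initial segment $I$ of $[k]^n$ of size $|A|$. If not, there exist $x \in A^* \setminus I$ and $y \in I \setminus A^*$ with $y$ strictly before $x$ in the simplicial order; by successively moving between $x$ and $y$ through intermediate points that differ from each other in only two coordinates, one identifies a pair $(i,j)$ and a fiber on which an $(i,j)$-compression would evict $x$ in favour of something closer to $y$, contradicting $C_{ij}(A^*) = A^*$.

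The main obstacle is the monotonicity of the $t$-boundary: $|C_{ij}(A)_{(t)}| \leq |A_{(t)}|$ for every $t$. Since the Manhattan metric splits additively across coordinates, on any $(i,j)$-fiber $L$ the intersection $A_{(t)} \cap L$ is the union, over nearby fibers $L'$, of the $t'$-neighbourhood in $L$ of the translate of $A \cap L'$, where $t' = t - d_1(L, L')$. The whole argument therefore reduces to a two-dimensional claim: for $[k]^2$ with the Manhattan metric and its simplicial order, the $s$-neighbourhood of an initial segment is itself an initial segment, and its cardinality is no larger than the $s$-neighbourhood of any set of equal size. I would prove this by describing initial segments in $[k]^2$ as staircases along anti-diagonals, checking directly that their Manhattan $s$-balls are again staircases, and establishing the inequality by a swap argument (each swap of a ``late'' point in $A$ for an ``earlier'' missing point cannot increase the neighbourhood). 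Summing the resulting fiber-wise inequality over all fibers then propagates the two-dimensional bound up to the full grid.

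Combining these three ingredients yields the theorem: from any $A \subseteq [k]^n$ a finite sequence of compressions terminates at the initial segment $I$ of size $|A|$, and along the way $|A_{(t)}|$ is non-increasing, so $|A_{(t)}| \geq |I_{(t)}|$ as claimed.
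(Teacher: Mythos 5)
This statement is not proved in the paper at all: it is quoted verbatim from Bollob\'as--Leader \cite{BOLLOBAS199147} (their Corollary 9), so the comparison must be with their compression proof rather than with anything internal to this text. Your outline follows the same general compression philosophy, and two of your three ingredients are fine in outline: the termination argument via the rank potential is standard, and the reduction of boundary-monotonicity under a single compression to a two-dimensional statement (neighbourhoods of initial segments in $[k]^2$ are initial segments of minimal size, summed over fibers) is the usual shape of such arguments.

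The genuine gap is your second step: the claim that a set invariant under every $(i,j)$-compression must be the initial segment of the simplicial order is false, and no local two-coordinate exchange argument can repair it. Take $n=3$, $k=2$ and $A=\{x\in\{0,1\}^3 : x_3=0\}$. Every two-dimensional fiber meets $A$ in a full fiber, an empty set, or the set $\{00,10\}$, each of which is an initial segment of the fiber's simplicial order, so $C_{ij}(A)=A$ for all pairs $(i,j)$; yet $A$ is not an initial segment (the first four elements are $\{000,100,010,001\}$), and moreover $|A_{(1)}|=8$ while the true initial segment has $1$-boundary of size $7$. So a fully compressed set can be strictly worse than the initial segment, which means the theorem cannot be deduced from compression-invariance alone: after compressions stabilize you still have to carry out a separate structural analysis of compressed sets that are not initial segments and show the inequality for them directly. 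That extremal-case analysis is precisely the technical heart of the Bollob\'as--Leader proof (and of Harper's theorem on the hypercube), and it is the piece your proposal is missing. Your witness-pair argument fails on this example exactly because the offending points $x=110$ and $y=001$ differ in all three coordinates, and no single fiber ever exhibits a violation.
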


	Here $[k]^n$ is the lattice $\{0, \ldots, k - 1\}^n$. Simplicial order on $[k]^n$ is defined by setting $x < y$ if either $\sum x_i < \sum y_i$, or $\sum x_i = \sum y_i$ and for some $j$ we have $x_j > y_j$ and $x_i = y_i$ for all $i < j$. This discrete isoperimetric inequality leads to

	\begin{theorem}[{{\cite[Corollary 10]{BOLLOBAS199147}}}]
		\label{discrete_distance}
		There are sets $A, B \subset [k]^n$ with $|A| = r, |B| = s$, and $d(A, B) \geq d$ iff the distance between the first $r$ and the last $s$ elements of the simplicial order on $[k]^n$ is at least $d$.
	\end{theorem}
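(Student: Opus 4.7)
One direction is immediate: the ``if'' part says that if the first $r$ elements $A^{\ast}$ and the last $s$ elements $B^{\ast}$ of the simplicial order are at distance at least $d$, then there exist sets of the required sizes and distance, which is witnessed by the pair $(A^{\ast}, B^{\ast})$ itself. All the work is in the ``only if'' direction, and my plan is to deduce it from the discrete isoperimetric inequality (Corollary 9 of Bollob\'as--Leader) stated just above.

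Suppose $A, B \subset [k]^n$ with $|A| = r$, $|B| = s$, and $d(A, B) \geq d$. Reformulate the distance condition as $B \cap A_{(d-1)} = \varnothing$, which gives the cardinality inequality
\[
    s = |B| \leq k^n - |A_{(d-1)}|.
\]
By the isoperimetric inequality for $t = d - 1$, we have $|A_{(d-1)}| \geq |A^{\ast}_{(d-1)}|$, and combining with the display above,
\[
    s \leq k^n - |A^{\ast}_{(d-1)}|.
\]
So the last $s$ elements of the simplicial order form a set $B^{\ast}$ of cardinality at most $k^n - |A^{\ast}_{(d-1)}|$, and the goal reduces to showing that $B^{\ast}$ is disjoint from $A^{\ast}_{(d-1)}$.

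The key step, and the one I expect to be the main technical point, is to verify that $A^{\ast}_{(d-1)}$ is itself an initial segment of the simplicial order on $[k]^n$. This is the ``nestedness'' property of the simplicial order: iterated $1$-boundaries of an initial segment remain initial segments. I would prove it by induction on $d$, with the case $d = 1$ handled directly from the definition of the simplicial order (one checks that if $x < y$ in the simplicial order and $y \in A^{\ast}_{(1)}$, then some adjacent neighbour of $y$ lies in $A^{\ast}$, and by a case analysis on coordinate moves decreasing a coordinate or adjusting two coordinates one produces an analogous neighbour for $x$ inside $A^{\ast}$). The inductive step follows by writing $A^{\ast}_{(d-1)} = (A^{\ast}_{(d-2)})_{(1)}$.

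Once $A^{\ast}_{(d-1)}$ is known to be an initial segment, its complement is a final segment of the simplicial order of size at least $s$, so it contains the last $s$ elements $B^{\ast}$. Therefore $B^{\ast} \cap A^{\ast}_{(d-1)} = \varnothing$, which is exactly $d(A^{\ast}, B^{\ast}) \geq d$, completing the proof. The obstacle is genuinely the nestedness claim; the rest is bookkeeping around the isoperimetric inequality.
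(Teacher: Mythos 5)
The paper never proves this statement -- it is imported verbatim from Bollob\'as and Leader \cite{BOLLOBAS199147} -- so there is no internal argument to compare against. Judged on its own, your derivation is the natural one (and, as far as I can tell, essentially the route taken in the original paper): the converse direction is trivial, $d(A,B)\ge d$ is correctly reformulated as $B\cap A_{(d-1)}=\varnothing$, Corollary 9 gives $|A_{(d-1)}|\ge |A^{\ast}_{(d-1)}|$, and the whole burden is then carried by the nestedness claim that $A^{\ast}_{(d-1)}$ is again an initial segment of the simplicial order. You have identified the right crux; the claim is true, but in your write-up it is only gestured at, so that is the one place where work remains.

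To close it: the reduction to $t=1$ via $A^{\ast}_{(t)}=(A^{\ast}_{(t-1)})_{(1)}$ is legitimate because the $\ell_1$ distance on $[k]^n$ coincides with the graph metric of the grid, so balls of radius $t$ are iterated unit balls. For $t=1$, rather than an ad hoc case analysis, argue through closed unit balls $B(z,1)$: for an initial segment $I$ one has $y\in I_{(1)}$ if and only if $\min B(y,1)\in I$, where the minimum is taken in the simplicial order, and for $z\neq 0$ this minimum is $z$ with its \emph{last} nonzero coordinate decreased by one (decreasing a later coordinate yields a simplicially smaller point, and any point of smaller coordinate sum precedes all points of larger sum). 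Hence it suffices to show that $x\le y$ implies $\min B(x,1)\le\min B(y,1)$; this is immediate when the coordinate sums of $x$ and $y$ differ, and when they are equal a short comparison at the first coordinate where $x$ and $y$ differ (splitting according to whether the last nonzero coordinate of $x$ lies at or after that index) settles it. With that monotonicity in hand, $x\le y\in I_{(1)}$ gives $\min B(x,1)\le\min B(y,1)\in I$, so $\min B(x,1)\in I$ and $x\in I_{(1)}$, proving nestedness; the remaining bookkeeping in your proposal (the complement of $A^{\ast}_{(d-1)}$ is a final segment of size at least $s$, hence contains $B^{\ast}$, hence $d(A^{\ast},B^{\ast})\ge d$) is correct as written.
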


	From this discrete version of the problem considered in the first pararaph we can derive

	\begin{theorem}
		\label{discrete_cube}
		If by $d_n(\varepsilon)$ we denote the largest Manhattan distance between two bodies of volume $\varepsilon \in (0; \frac{1}{2})$ in the unit cube $[0; 1]^n$, then
		\[
			\lim_{n \to \infty} \frac{d_n(\varepsilon)}{\sqrt{n}} = -2\sqrt{\frac{\pi}{6}}\Phi^{-1}(\varepsilon)
		\]
		The function $-2\sqrt{\frac{\pi}{6}} \Phi^{-1}(\varepsilon)$ is asymptotically equivalent to
		\[
			\frac{2}{\sqrt{6}}\sqrt{- \ln \varepsilon}
		\]
		as $\varepsilon \to 0$. 
	\end{theorem}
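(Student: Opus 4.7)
The plan is to prove matching lower and upper bounds on $d_n(\varepsilon)/\sqrt{n}$, both centered on a half-space cut along the main diagonal whose position is controlled by the central limit theorem. The lower bound is constructive and mirrors the proof of Theorem \ref{cubes}; the upper bound discretizes the cube and invokes the discrete isoperimetric optimality of simplicial segments (Theorem \ref{discrete_distance}).

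For the lower bound, I would pick any $a$ with $0 < -a < -\sqrt{\pi/6}\,\Phi^{-1}(\varepsilon)$ and take
\[
A = \Bigl\{x \in [0,1]^n : \sum_{i} x_i \leq n/2 - a\sqrt{n}\Bigr\}, \qquad B = \Bigl\{x : \sum_{i} x_i \geq n/2 + a\sqrt{n}\Bigr\}.
\]
By the central limit theorem applied to $n$ iid uniforms on $[0,1]$ (mean $n/2$, variance $n/12$), combined with the change of variables identifying the standard normal CDF with $y \mapsto \Phi(y/\sqrt{2\pi})$, the volume of $A$ tends to $\Phi(-a\sqrt{6/\pi})$, which by the choice of $a$ strictly exceeds $\varepsilon$. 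For any $x \in A$, $y \in B$ the triangle inequality gives $\|y-x\|_1 \geq \sum_i(y_i - x_i) \geq 2a\sqrt{n}$. Taking subsets of exact volume $\varepsilon$ and letting $a \nearrow -\sqrt{\pi/6}\,\Phi^{-1}(\varepsilon)$ yields $\liminf_n d_n(\varepsilon)/\sqrt{n} \geq -2\sqrt{\pi/6}\,\Phi^{-1}(\varepsilon)$.

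For the upper bound, fix $\eta > 0$ and measurable $A, B \subset [0,1]^n$ of volume $\varepsilon$. By inner regularity, pick Jordan-measurable $\hat A \subseteq A$, $\hat B \subseteq B$ with volumes $\geq \varepsilon - \eta$; since $\hat A \subseteq A$, $\hat B \subseteq B$, one has $\operatorname{dist}(A, B) \leq \operatorname{dist}(\hat A, \hat B)$. Discretize via the lattice $L_m = \{0, 1/m, \ldots, 1\}^n$: let $\hat A_m$ be the lattice points whose associated cell $x + [0, 1/m)^n$ meets $\hat A$, and similarly $\hat B_m$. A reverse-triangle-inequality argument gives $\operatorname{dist}(\hat A, \hat B) \leq \operatorname{dist}_{L_m}(\hat A_m, \hat B_m) + 2n/m$ in the Manhattan metric, and Jordan-measurability supplies $|\hat A_m|/(m+1)^n \to \operatorname{vol}(\hat A)$ as $m \to \infty$. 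Identifying $L_m$ with $(1/m)\cdot[m+1]^n$, Theorem \ref{discrete_distance} upper-bounds $\operatorname{dist}_{L_m}(\hat A_m, \hat B_m)$ by the Manhattan distance in $L_m$ between the initial segment of size $|\hat A_m|$ and the final segment of size $|\hat B_m|$ in the simplicial order. Those two segments are (up to partial occupancy of a single level set) $\{x \in L_m : \sum x_i \leq \tau_m\}$ and $\{x \in L_m : \sum x_i \geq n - \tau_m\}$, at Manhattan distance exactly $n - 2\tau_m$. As $m \to \infty$ the threshold $\tau_m$ converges to the $\operatorname{vol}(\hat A)$-quantile $t^\ast(n, \operatorname{vol}(\hat A))$ of $\sum_{i=1}^n Y_i$ with $Y_i$ uniform on $[0,1]$, and the CLT gives $t^\ast(n, \operatorname{vol}(\hat A)) = n/2 + \sqrt{n\pi/6}\,\Phi^{-1}(\operatorname{vol}(\hat A)) + o(\sqrt{n})$ as $n \to \infty$. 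Assembling everything and using $\operatorname{vol}(\hat A) \geq \varepsilon - \eta$ yields $d_n(\varepsilon)/\sqrt{n} \leq -2\sqrt{\pi/6}\,\Phi^{-1}(\varepsilon - \eta) + o(1)$; letting $\eta \to 0$ with continuity of $\Phi^{-1}$ closes the argument.

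The chief technical obstacle is managing the nested passage $m \to \infty$, $n \to \infty$, $\eta \to 0$: we need the discrete-quantile asymptotics to be uniform enough in the discretization parameter that $m$ can depend on $n$ and $\eta$ without polluting the leading-order $\sqrt{n}$ behavior. Quantitative CLT bounds (Berry--Esseen for sums of bounded iid variables) supply the required uniformity, while inner regularity bridges the gap between arbitrary measurable sets and the Jordan-measurable ones on which the discretization cleanly converges.
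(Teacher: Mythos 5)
Your proposal is correct in outline and follows essentially the same route as the paper: the lower bound is the diagonal half-space cut plus the central limit theorem, exactly as in Theorem \ref{cubes}, and the upper bound discretizes to the lattice $L_m$ and invokes the extremality of simplicial segments (Theorem \ref{discrete_distance}). The execution differs only in details: the paper uses the inner discretization $A \cap L_m$, $B \cap L_m$ (so no $2n/m$ correction is needed) and, rather than computing the quantile of the extremal segments asymptotically, compares their sizes with the fixed cuts $V_n$, $W_n$ of volume $<\varepsilon$, whose lattice restrictions are themselves initial/final simplicial segments. Because the comparison sets are fixed in advance, the nested-limit uniformity you worry about in your last paragraph never arises: for each fixed $n$ one lets $m \to \infty$, and the CLT is invoked a single time, for the fixed cut at level $a$; no Berry--Esseen input is needed.

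One step is wrong as stated: inner regularity does not yield a Jordan-measurable $\hat A \subseteq A$ with $\mu(\hat A) \ge \varepsilon - \eta$ for an arbitrary measurable $A$. A Jordan-measurable set of positive measure must contain a box, so a fat-Cantor-type set of measure $\varepsilon$ (positive measure, empty interior) has no Jordan-measurable subset of positive measure. Fortunately you never need this: your $\hat A_m$ is an outer approximation, the cells meeting $A$ cover $A$, hence $|\hat A_m| \ge m^n \mu(A)$ for any measurable $A$, and a lower bound on $|\hat A_m|, |\hat B_m|$ is all the argument uses, since enlarging the initial and final segments only decreases their distance. So you can discretize $A$, $B$ directly and drop $\eta$ and the regularization altogether; alternatively, the paper's standing smoothness assumption on bodies makes the point moot. (Minor sign slip in your lower bound: you want $0 < a < -\sqrt{\pi/6}\,\Phi^{-1}(\varepsilon)$, i.e.\ $\sqrt{\pi/6}\,\Phi^{-1}(\varepsilon) < -a < 0$, as in the paper, so that the two cuts have volume exceeding $\varepsilon$ and lie at distance $2a\sqrt{n} > 0$.)
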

	\begin{proof}
		Pick a number $a$ such that
		\[
			-a < \sqrt{\frac{\pi}{6}} \Phi^{-1}(\varepsilon) < 0
		\]
		If by $V_n$ and $W_n$ we denote the subsets of $[0; 1]^n$ defined by inequalities $\sum x_i \leq \frac{1}{2}\sqrt{n} - a$ and $\sum x_i \geq \frac{1}{2}\sqrt{n} + a$, respectively, then by applying the central limit theorem just as we did in Theorem \ref{cubes} we shall get
		\begin{multline*}
			\lim_{n \to \infty} \mu(V_n) = \lim_{n \to \infty} \mu(W_n) = \int_{-\infty}^{-a} \sqrt{\frac{6}{\pi}} e^{-6 x^2} dx = \Phi\left( -\sqrt{\frac{6}{\pi}} a \right) \\ < \Phi\left( \sqrt{\frac{6}{\pi}} \sqrt{\frac{\pi}{6}} \Phi^{-1}(\varepsilon) \right) = \varepsilon
		\end{multline*}
		So there is a number $N$ such that for all $n > N$ the volumes of $V_n$ and $W_n$ are going to be lesser than $\varepsilon$, and it could be noted that the Manhattan distance between them is equal to $2a\sqrt{n}$. From now on we assume that $n > N$.

		Now consider two bodies $A$ and $B$ inside $[0; 1]^n$ of volume $\varepsilon$ and a lattice $L_m = \{\frac{0}{m}, \ldots, \frac{m}{m}\}^n$. The discretization is justified by the fact that the distance does not decrease when we restrict our attention to the lattice $L_m$
		\[
			d(A, B) \leq d(A \cap L_m, B \cap L_m)
		\]
		Since $\mu(V_n) < \mu(A)$ and $\mu(W_n) < \mu(B)$, for all $m$ large enough we shall have
		\[
			|V_n \cap L_m| \leq |A \cap L_m| \quad |W_m \cap L_m| \leq |B \cap L_m|
		\]
		And by Theorem \ref{discrete_distance} this implies
		\[
			d(A \cap L_m, B \cap L_m) \leq d(V_n \cap L_m, W_n \cap L_m),
		\]
		because $V_n \cap L_m$ and $W_n \cap L_m$ are the first $|V_n \cap L_m|$ and the last $|W_n \cap L_m|$ elements of the simplicial order on $L_m$, respectively.

		We conclude that for all $m$ large enough
		\[
			d(A, B) \leq d(V_n \cap L_m, W_n \cap L_m)
		\]
		Also
		\[
			\lim_{m \to \infty} d(V_n \cap L_m, W_n \cap L_m) = d(V_n, W_n) = 2a\sqrt{n}
		\]
		Thus for all $n > N$
		\[
			d(A, B) \leq 2a\sqrt{n},
		\]
		but $a$ was chosen here as an arbitrary number greater than
		$
			-\sqrt{\frac{\pi}{6}} \Phi^{-1}(\varepsilon),
		$
		from which
		\begin{equation}
			\label{_one}
			\limsup_{n \to \infty} \frac{d_n(\varepsilon)}{\sqrt{n}} \leq -2\sqrt{\frac{\pi}{6}}\Phi^{-1}(\varepsilon)
		\end{equation}
		follows.

		If we assume that
		\[
			\sqrt{\frac{\pi}{6}} \Phi^{-1}(\varepsilon) < -a < 0,
		\]
		then as it was already shown in the proof of Theorem \ref{cubes} for all $n$ large enough
		\[
			\mu(V_n) = \mu(W_n) > \varepsilon
		\]
		But the Manhattan distance between $V_n$ and $W_n$ is $2a\sqrt{n}$.

		From this observation we can conclude
		\[
			\liminf_{n \to \infty} \frac{d_n(\varepsilon)}{\sqrt{n}} \geq -2\sqrt{\frac{\pi}{6}}\Phi^{-1}(\varepsilon),
		\]
		which together with inequality (\ref{_one}) gives us the statement of the theorem.
	\end{proof}

	\textbf{Conclusions.} There are also other variations of the discrete isoperimetric problem. For example, Theorem \ref{discrete_distance} has an analogue for the non-negative orthant of the integer lattice $\mathbb{Z}^n_+$(see \cite{10.2307/2100602} and \cite[Theorem 4]{BOLLOBAS199147}).

	\newpage	

	\section{Conclusions.}

	Upper bounds on the distance between subsets in unit-volume $\ell_p$ balls with $p \in [1;2] \cup \{+\infty\}$ have been established. A case of $p \in (2; +\infty)$ remains. Since asymptotically our estimates for unit-volume euclidean balls($p = 2$) and for unit cubes($p = +\infty$) are the same, we expect to have a similar asymptotic behavior for all $p \in [2; +\infty]$. As was remarked before both cases of $p = 2$ and $p = +\infty$ could be approached by providing a Lipschitz map that transforms Gaussian measure into the uniform measure on the corresponding $\ell_p$ balls. Perhaps, the same approach might work out for $p \in (2; +\infty)$. On the page $4$ of \cite{B07} it was remarked that the uniform measure on $\ell_p^n$ balls with $p \in [2; +\infty]$ <<can be obtained from the canonical Gaussian measure as Lipschitz transform>>. Although, we are not sure about the exact meaning and implications of this statement.

	In Theorem \ref{general_sup} we established a sort of a general lower bound regarding our problem. But is it possible to find a general upper bound on the distance between two subsets of volume $\varepsilon \in (0; \frac{1}{2})$ in a unit-volume convex body? Clearly, we are going to have to consider only some certain <<good>> convex bodies. For example, in a convex body stretched far in a particular direction two subsets of fixed volume could be at an arbitrarily large distance. One such notion of a <<good>> convex body is related to the isotropic position. But this means that we are looking for general estimates on the isoperimetric problem in isotropic convex bodies, this appears to be a complicated open question(Kannan-Lov\'{a}sz-Simonovits conjecture, see \cite{KLS}). Furthermore, even the relation between the volume and the isotropic constant of a convex body seems to be at the core of another open problem(isotropic constant conjecture, see \cite{B91}, \cite{S95}). In paper \cite{KLS_U} a very good lower bound related to the KLS conjecture was proven. There is a lot of material on isotropic convex bodies, for example, \cite{isotropy}.

	Log-concave probability measures generalize uniform measures on convex bodies. So one might try to consider the problem in a more general setting. Gaussian measures are log-concave, which means that in a more general setting our estimates may be sharp for some specific distributions in the one-dimensional case. In paper \cite{MS} some general estimates on the specific version of the isoperimetric problem were proven(see, for example, theorem $1.3$).

	\nocite{*}
	\printbibliography

	\newpage

	\appendix

	\section{Asymptotic behavior of $\Phi^{-1}$.}
	\label{asymp_phi_inv}

	We want to show that
	\begin{equation}
		\label{asymp_limit}
		\lim_{\varepsilon \to 0} \frac{\frac{1}{\sqrt{\pi}} \sqrt{-\ln \varepsilon}}{-\Phi^{-1}(\varepsilon)} = 1
	\end{equation}
	We begin with the following observation
	\[
			\frac{d}{dx}\left(\frac{e^{-\pi x^2}}{-2\pi x}\right) = e^{-\pi x^2} + \frac{e^{-\pi x^2}}{2 \pi x^2}
	\]
	By integrating both parts from $-\infty$ to $a < 0$ we get
	\begin{equation}
		\label{phi_integral}
		\frac{e^{-\pi a^2}}{-2\pi a} = \left.\left(\frac{e^{-\pi x^2}}{-2\pi x}\right)\right|_{-\infty}^{a} = \int_{-\infty}^{a} e^{-\pi x^2}\left(1 + \frac{1}{2\pi x^2}\right)dx
	\end{equation}

	From this we can derive the following upper bound on $\Phi$
	\[
		\Phi(a) \leq \int_{-\infty}^{a} e^{-\pi x^2}\left(1 + \frac{1}{2\pi x^2}\right)dx = \frac{e^{-\pi a^2}}{-2\pi a}
	\]
	Now we apply $-\Phi^{-1}$ to both sides
	\[
		-a \geq -\Phi^{-1}\left( \frac{e^{-\pi a^2}}{-2\pi a} \right)
	\]
	Keep in mind that $a < 0$. By putting $\frac{e^{-\pi a^2}}{-2\pi a}$ instead of $\varepsilon$ into the expression from (\ref{asymp_limit}) we arrive at
	\[
		\frac{\frac{1}{\sqrt{\pi}}\sqrt{\pi a^2 + \ln(-2\pi a)}}{-\Phi^{-1}(\frac{e^{-\pi a^2}}{-2\pi a})} \geq \frac{\frac{1}{\sqrt{\pi}}\sqrt{\pi a^2 + \ln(-2\pi a)}}{-a}
	\]
	But as $a$ goes to $-\infty$
	\[
		\frac{e^{-\pi a^2}}{-2\pi a} \to 0 \textrm{ and } \frac{\frac{1}{\sqrt{\pi}}\sqrt{\pi a^2 + \ln(-2\pi a)}}{-a} \to 1
	\]
	So
	\begin{equation}
		\label{limit_inf}
		\liminf_{\varepsilon \to 0} \frac{\frac{1}{\sqrt{\pi}} \sqrt{-\ln \varepsilon}}{-\Phi^{-1}(\varepsilon)} \geq 1
	\end{equation}
	if defined.

	From (\ref{phi_integral}) we can also derive a lower bound on $\Phi$
	\[
		\Phi(a) \geq \left(1 + \frac{1}{2\pi a^2} \right)^{-1} \int_{-\infty}^{a} e^{-\pi x^2}\left(1 + \frac{1}{2\pi x^2}\right)dx = \frac{e^{-\pi a^2}}{-2\pi a} \left(1 + \frac{1}{2\pi a^2} \right)^{-1}
	\]
	Again we apply $-\Phi^{-1}$ to both sides and get
	\[
		-a \leq -\Phi^{-1}\left( \frac{e^{-\pi a^2}}{-2\pi a} \left(1 + \frac{1}{2\pi a^2} \right)^{-1} \right)
	\]
	By putting $\frac{e^{-\pi a^2}}{-2\pi a} \left(1 + \frac{1}{2\pi a^2} \right)^{-1}$ instead of $\varepsilon$ into the expression from (\ref{asymp_limit}) we arrive at
	\[
		\frac{\frac{1}{\sqrt{\pi}}\sqrt{\pi a^2 + \ln(-2\pi a) + \ln\left( 1 + \frac{1}{2\pi a^2} \right)}}{-\Phi^{-1}\left( \frac{e^{-\pi a^2}}{-2\pi a} \left(1 + \frac{1}{2\pi a^2} \right)^{-1} \right)} \leq \frac{\frac{1}{\sqrt{\pi}}\sqrt{\pi a^2 + \ln(-2\pi a) + \ln\left( 1 + \frac{1}{2\pi a^2} \right)}}{-a}
	\]
	But as $a$ goes to $-\infty$
	\[
		\frac{e^{-\pi a^2}}{-2\pi a} \left(1 + \frac{1}{2\pi a^2} \right)^{-1} \to 0,
	\]
	\[
		\frac{\frac{1}{\sqrt{\pi}}\sqrt{\pi a^2 + \ln(-2\pi a) + \ln\left( 1 + \frac{1}{2\pi a^2} \right)}}{-a} \to 1
	\]
	Thus
	\begin{equation}
		\label{limit_sup}
		\limsup_{\varepsilon \to 0} \frac{\frac{1}{\sqrt{\pi}} \sqrt{-\ln \varepsilon}}{-\Phi^{-1}(\varepsilon)} \leq 1
	\end{equation}
	Together (\ref{limit_inf}) and (\ref{limit_sup}) give
	\[
		\lim_{\varepsilon \to 0} \frac{\frac{1}{\sqrt{\pi}} \sqrt{-\ln \varepsilon}}{-\Phi^{-1}(\varepsilon)} = 1
	\]

	\section{Function $x (-\log x)^{1 - \frac{1}{p}}$ is increasing on $(0; \frac{1}{2}]$.}
	\label{weird_func_inc}

	To show that function $x (-\log x)^{1 - \frac{1}{p}}$ is increasing on $(0; \frac{1}{2}]$ for $p \in [1; 2]$ we simply take the derivative
	\begin{multline*}
		\frac{d}{dx}\left( x (-\log x)^{1 - \frac{1}{p}} \right) = (-\log x)^{1 - \frac{1}{p}} + x \frac{d}{dx} (-\log x)^{1 - \frac{1}{p}} \\ = (-\log x)^{1 - \frac{1}{p}} + x \frac{d (-\log x)^{1 - \frac{1}{p}}}{d(-\log x)}\frac{d(-\log x)}{dx} \\ = (-\log x)^{1 - \frac{1}{p}} + \left(1 - \frac{1}{p}\right)x (-\log x)^{- \frac{1}{p}} \left(-\frac{1}{x}\right) \\ = (-\log x)^{1 - \frac{1}{p}} - \left(1 - \frac{1}{p}\right) (-\log x)^{- \frac{1}{p}} \\ = (-\log x)^{- \frac{1}{p}} \left( (-\log x) - \left(1 - \frac{1}{p}\right) \right)
	\end{multline*}
	Clearly, $(-\log x)^{- \frac{1}{p}} > 0$. On the half-interval $(0; \frac{1}{2}]$ we should have
	\[
		(-\log x) - \left(1 - \frac{1}{p}\right) \geq \left(-\log \frac{1}{2}\right) - \frac{1}{2} = \log 2 - \frac{1}{2} > 0,
	\]
	which means that for all $x \in (0; \frac{1}{2}]$
	\[
		\frac{d}{dx}\left( x (-\log x)^{1 - \frac{1}{p}} \right) > 0
	\]

	\section{Functions $V_n(x)$ and $S_n(x)$ in limit.}
	\label{v_n_s_n}

	For $p \in [1; \infty)$ consider the unit $\ell_p^n$ ball defined by
	\[
		|x_1|^p + \ldots + |x_n|^p = 1
	\]
	It has volume
	\[
		\frac{\left(2 \Gamma\left(1 + \frac{1}{p}\right)\right)^n}{\Gamma\left(1 + \frac{n}{p}\right)}
	\]
	Let
	\begin{equation}
		\omega_n = \frac{\Gamma\left(1 + \frac{n}{p}\right)^{\frac{1}{n}}}{2\Gamma\left(1 + \frac{1}{p}\right)} \sim \frac{n^{\frac{1}{p}}}{2\Gamma\left(1 + \frac{1}{p}\right)(pe)^{\frac{1}{p}}}
	\end{equation}

	For a non-negative number $x$ by $S_n(x)$ denote the volume of the section $x_1 = x$ of the unit-volume body $\omega_n \ell_p^n$. For $x > \omega_n$ we assume $S_n(x) = 0$. For $x \leq \omega_n$ the $\ell_p^{n - 1}$ ball corresponding to this section is defined by equation
	\[
		|x_2|^p + \ldots + |x_n|^p = \omega_n^p - x^p
	\]
	It has volume
	\begin{multline*}
		S_n(x) = (\omega_n^p - x^p)^{\frac{n - 1}{p}} \frac{\left(2 \Gamma\left(1 + \frac{1}{p}\right)\right)^{n - 1}}{\Gamma\left(1 + \frac{n - 1}{p}\right)} \\ = \left(1 - \frac{x^p}{\omega_n^p} \right)^{\frac{n - 1}{p}} \frac{1}{\omega_n} \omega_n^n \frac{\left(2 \Gamma\left(1 + \frac{1}{p}\right)\right)^{n - 1}}{\Gamma\left(1 + \frac{n - 1}{p}\right)} \\ = \left(1 - \frac{x^p}{\omega_n^p} \right)^{\frac{n - 1}{p}} \frac{1}{\omega_n} \frac{\Gamma\left(1 + \frac{n}{p}\right)}{\left(2 \Gamma\left(1 + \frac{1}{p}\right)\right)^n} \frac{\left(2 \Gamma\left(1 + \frac{1}{p}\right)\right)^{n - 1}}{\Gamma\left(1 + \frac{n - 1}{p}\right)} \\ =
		\left(1 - \frac{x^p}{\omega_n^p} \right)^{\frac{n - 1}{p}} \frac{\Gamma\left(1 + \frac{n}{p}\right)}{2 \omega_n \Gamma\left(1 + \frac{1}{p}\right) \Gamma\left(1 + \frac{n - 1}{p}\right)}
	\end{multline*}

	Since
	\[
		\lim_{x \to \infty} \frac{\Gamma(x + \alpha)}{\Gamma(x) x^{\alpha}} = 1,
	\]
	we shall have
	\begin{multline*}
		\lim_{n \to \infty}  \frac{\Gamma\left(1 + \frac{n}{p}\right)}{2 \omega_n \Gamma\left(1 + \frac{1}{p}\right) \Gamma\left(1 + \frac{n - 1}{p}\right)} = \lim_{n \to \infty} \left(\omega_n 2 \Gamma\left(1 + \frac{1}{p}\right)\right)^{-1} \frac{\Gamma\left(1 + \frac{n}{p}\right)}{\Gamma\left(1 + \frac{n - 1}{p}\right)} \\ = \lim_{n \to \infty} \left( \frac{n^{\frac{1}{p}}}{2\Gamma\left(1 + \frac{1}{p}\right)(pe)^{\frac{1}{p}}} 2 \Gamma\left(1 + \frac{1}{p}\right)\right)^{-1} \left(\frac{n}{p} \right)^{\frac{1}{p}} = e^{\frac{1}{p}}
	\end{multline*}

	Also
	\begin{equation*}
		\lim_{n \to \infty} \frac{n}{p \omega_n^p} = \lim_{n \to \infty} n\left(p \frac{n}{2^p \Gamma\left(1 + \frac{1}{p}\right)^{p}pe}\right)^{-1} = 2^p \Gamma\left(1 + \frac{1}{p}\right)^p e
	\end{equation*}
	Thus on the segment $[0; D]$ the sequence of functions
	\[
		\left(1 - \frac{x^p}{\omega_n^p} \right)^{\frac{n - 1}{p}} = \left(1 - \frac{x^p}{\omega_n^p} \right) \left(1 - \frac{x^p \frac{n}{p \omega_n^p}}{\frac{n}{p}} \right)^{\frac{n}{p}}
	\]
	uniformly converges to
	\[
		e^{-x^p 2^p \Gamma\left(1 + \frac{1}{p}\right)^p e} = e^{-\left(2 \Gamma\left( 1 + \frac{1}{p}\right) e^{\frac{1}{p}} x\right)^p}
	\]
	as $n$ tends to $\infty$.

	Thus on the segment $[0; D]$ functions $S_n(x)$ uniformly converge to
	\[
		\psi_p(x) = e^{\frac{1}{p}} e^{-x^p 2^p \Gamma\left(1 + \frac{1}{p}\right)^p e} = e^{\frac{1}{p}} e^{-\left(2 \Gamma\left( 1 + \frac{1}{p}\right) e^{\frac{1}{p}} x\right)^p}
	\]
	For an arbitrary $\varepsilon > 0$ pick a number $D$ such that
	\[
		\psi_p(D) < \varepsilon,
	\]
	then for all sufficiently large $n$
	\[
		S_n(D) < 2\varepsilon
	\]
	And, since both functions $S_n(x)$ and $\psi_p(x)$ are non-increasing, we will have
	\[
		|\psi_p(x) - S_n(x)| < 3\varepsilon
	\]
	for every $x \geq D$ for all sufficiently large $n$. This implies that functions $S_n(x)$ uniformly converge to $\psi_p(x)$ on the whole $[0; +\infty)$.

	For a non-negative $x$ by $V_n(x)$ denote the volume of the part of $\omega_n \ell_p^n$ defined by $x_1 \geq x$. For $x > \omega_n$ we assume that $V_n(x) = 0$.

	Note that
	\[
		V_n(x) = \frac{1}{2} - \int_{0}^{d} S_n(t) dt
	\]
	And, since $S_n(t)$ uniformly converge to $\psi_p(x)$ on $[0; D]$, we shall have that $V_n(x)$ uniformly converge to
	\[
		\Lambda_p(x) = \frac{1}{2} - \int_{0}^{d} \psi_p(t) dt
	\]
	on $[0; D]$.

	Also
	\begin{multline*}
		\int_{0}^{\infty} \psi_p(x) dx = \int_{0}^{\infty} e^{\frac{1}{p}} e^{-\left(2 \Gamma\left( 1 + \frac{1}{p}\right) e^{\frac{1}{p}} x\right)^p} dx \\ = \frac{1}{2 \Gamma\left( 1 + \frac{1}{p} \right)} \int_{0}^{\infty} e^{-\left(2 \Gamma\left( 1 + \frac{1}{p}\right) e^{\frac{1}{p}} x\right)^p} \left(\frac{1}{2 \Gamma\left( 1 + \frac{1}{p}\right) e^{\frac{1}{p}}} dx\right) \\ = \frac{1}{2 \Gamma\left( 1 + \frac{1}{p} \right)} \int_{0}^{\infty} e^{-x^p} dx = \frac{1}{2 \Gamma\left( 1 + \frac{1}{p} \right)} \int_{0}^{\infty} e^{-t} dt^{\frac{1}{p}} \\ = \frac{1}{2 p \Gamma\left( 1 + \frac{1}{p} \right)} \int_{0}^{\infty} t^{\frac{1}{p} - 1} e^{-t} dt = \frac{\Gamma\left(\frac{1}{p}\right)}{2 p \Gamma\left( 1 + \frac{1}{p} \right)} = \frac{1}{2}
	\end{multline*}

	For an arbitrary $\varepsilon$ there is a number $D$ such that
	\[
		\Lambda_p(D) < \varepsilon
	\]
	Since $V_n(D)$ converge to $\Lambda_p(D)$, we shall have
	\[
		V_n(D) < 2\varepsilon
	\]
	for all sufficiently large $n$. And, since both functions $V_n(x)$ and $\Lambda_p(x)$ are non-increasing,
	\[
		|\Lambda_p(x) - V_n(x)| < 3\varepsilon
	\]
	for every $x \geq D$ for all sufficiently large $n$. This means that functions $V_n(x)$ uniformly converge to
	\[
		\Lambda_p(x) = \int_{x}^{\infty} \phi_p(t) dt
	\]
	on the whole $[0; +\infty)$.

	For Section \ref{euclidean_balls} note that
	\[
		\psi_2(x) = \sqrt{e} e^{-\pi e x^2} = \Psi^{\prime}(-x)
	\]
	\[
		\int_{x}^{\infty} \sqrt{e} e^{-\pi e t^2} dt = \int_{x}^{\infty} e^{-\pi (\sqrt{e} t)^2} d(\sqrt{e} t) = \int_{\sqrt{e} x}^{\infty} e^{-\pi x^2} dx = \Psi(-x)
	\]

	For convenience we introduce new functions defined on $(-\infty; +\infty)$
	\[
		\phi_p(x) = e^{-c_p |x|^p} \quad c_p = 2^p \Gamma\left(1 + \frac{1}{p}\right)^p
	\]
	\[
		\Phi_p(a) = \int_{-\infty}^{a} e^{-c_p |x|^p} dx
	\]
	\[
		\Psi_p(a) = \int_{-\infty}^{a} \phi_p(x) dx
	\]
	The latter function is the reflection of $\Lambda_p$.

	We would like to show that
	\begin{equation}
		\label{phi_p_lim}
		\lim_{\varepsilon \to 0} \frac{c_p^{-\frac{1}{p}}(-\ln \varepsilon)^{\frac{1}{p}}}{-\Phi_p^{-1}(\varepsilon)} = 1
	\end{equation}

	Note that
	\begin{multline*}
		\frac{d}{dx} \frac{e^{-c_p x^p}}{p c_p x^{p - 1}} = \frac{-p c_p x^{p - 1} e^{-c_p x^p}}{p c_p x^{p - 1}} - \frac{e^{-c_p x^p} p(p - 1) c_p x^{p - 2}}{p^2 c_p^2 x^{2(p - 1)}} \\ = - e^{-c_p x^p} \left(1 + \frac{p - 1}{p} \frac{1}{c_p} \frac{1}{x^p} \right)	
	\end{multline*}
	Let
	\[
		A = \frac{p - 1}{p} \frac{1}{c_p}
	\]
	We conclude that for a negative $x$ we shall have
	\[
		\left(\frac{e^{-c_p |x|^p}}{p c_p |x|^{p - 1}}\right)^{\prime} = e^{-c_p |x|^p} \left(1 + \frac{A}{|x|^p} \right)
	\]
	Integrating both sides from $-\infty$ to $a < 0$ gives us
	\begin{equation}
		\label{phi_p_int}
		\frac{e^{-c_p |x|^p}}{p c_p |x|^{p - 1}} = \int_{-\infty}^{a} e^{-c_p |x|^p} \left(1 + \frac{A}{|x|^p} \right) dx
	\end{equation}

	From this we can derive an upper bound on $\Phi_p$
	\[
		\Phi_p(a) = \int_{-\infty}^{a} e^{-c_p |x|^p} dx \leq \frac{e^{-c_p |x|^p}}{p c_p |x|^{p - 1}}
	\]
	Applying $-\Phi_p^{-1}$ to both sides should give us
	\[
		-a \geq -\Phi_p^{-1}\left(\frac{e^{-c_p |x|^p}}{p c_p |x|^{p - 1}}\right)
	\]

	If we put $\varepsilon = \frac{e^{-c_p |x|^p}}{p c_p |x|^{p - 1}}$ into the expression from (\ref{phi_p_lim}), we get
	\[
		\frac{c_p^{-\frac{1}{p}}(c_p |x|^p + \ln(p c_p |x|^{p - 1}))^{\frac{1}{p}}}{-\Phi_p^{-1}\left(\frac{e^{-c_p |x|^p}}{p c_p |x|^{p - 1}}\right)} \geq \frac{c_p^{-\frac{1}{p}}(c_p |x|^p + \ln(p c_p |x|^{p - 1}))^{\frac{1}{p}}}{-a}
	\]
	But as $a$ goes to $-\infty$
	\[
		\frac{e^{-c_p |x|^p}}{p c_p |x|^{p - 1}} \to 0 \textrm{ and } \frac{c_p^{-\frac{1}{p}}(c_p |x|^p + \ln(p c_p |x|^{p - 1}))^{\frac{1}{p}}}{-a} \to 1,
	\]
	from which we conclude
	\begin{equation}
		\label{phi_p_liminf}
		\liminf_{\varepsilon \to 0} \frac{c_p^{-\frac{1}{p}}(-\ln \varepsilon)^{\frac{1}{p}}}{-\Phi_p^{-1}(\varepsilon)} \geq 1
	\end{equation}

	Equality (\ref{phi_p_int}) also leads to a lower bound on $\Phi_p$
	\[
		\Phi_p(a) \geq \left(1 + \frac{A}{|a|^p}\right)^{-1} \frac{e^{-c_p |x|^p}}{p c_p |x|^{p - 1}}
	\]
	By applying $-\Phi_p^{-1}$ to both sides again we get
	\[
		-a \leq -\Phi_p^{-1}\left(\left(1 + \frac{A}{|a|^p}\right)^{-1} \frac{e^{-c_p |x|^p}}{p c_p |x|^{p - 1}}\right)
	\]
	By putting $\varepsilon = \left(1 + \frac{A}{|a|^p}\right)^{-1} \frac{e^{-c_p |x|^p}}{p c_p |x|^{p - 1}}$ into the expression from (\ref{phi_p_lim}) we arrive at
	\begin{multline*}
		\frac{c_p^{-\frac{1}{p}}\left(c_p |x|^p + \ln(p c_p |x|^{p - 1}) + \ln\left(1 + \frac{A}{|a|^p}\right)\right)^{\frac{1}{p}}}{-\Phi_p^{-1}\left(\left(1 + \frac{A}{|a|^p}\right)^{-1} \frac{e^{-c_p |x|^p}}{p c_p |x|^{p - 1}}\right)} \\ \leq \frac{c_p^{-\frac{1}{p}}\left(c_p |x|^p + \ln(p c_p |x|^{p - 1}) + \ln\left(1 + \frac{A}{|a|^p}\right)\right)^{\frac{1}{p}}}{-a}
	\end{multline*}
	As $a$ goes to $-\infty$
	\[
		\left(1 + \frac{A}{|a|^p}\right)^{-1} \frac{e^{-c_p |x|^p}}{p c_p |x|^{p - 1}} \to 0
	\]
	\[
		\frac{c_p^{-\frac{1}{p}}\left(c_p |x|^p + \ln(p c_p |x|^{p - 1}) + \ln\left(1 + \frac{A}{|a|^p}\right)\right)^{\frac{1}{p}}}{-a} \to 1
	\]
	And we conclude
	\[
		\limsup_{\varepsilon \to 0} \frac{c_p^{-\frac{1}{p}}(-\ln \varepsilon)^{\frac{1}{p}}}{-\Phi_p^{-1}(\varepsilon)} \leq 1
	\]

	Together with (\ref{phi_p_liminf}) this leads to
	\[
		\lim_{\varepsilon \to 0} \frac{c_p^{-\frac{1}{p}}(-\ln \varepsilon)^{\frac{1}{p}}}{-\Phi_p^{-1}(\varepsilon)} = 1
	\]

	If by $\Psi_p$ we denote the function
	\[
		\Psi_p(a) = \int_{\infty}^{a} e^{\frac{1}{p}} e^{-\left(2 \Gamma\left( 1 + \frac{1}{p}\right) e^{\frac{1}{p}} x\right)^p} dx = \Phi_p(e^{\frac{1}{p}} a),
	\]
	then we shall have
	\[
		-\Psi^{-1}_p(\varepsilon) \sim \frac{1}{2 e^{\frac{1}{p}} \Gamma\left(1 + \frac{1}{p}\right)} (-\ln \varepsilon)^{\frac{1}{p}}
	\]

	\section{Average distance.}
	\label{av_dist}

	The radius of the unit-volume euclidean $n$-ball is
	\begin{equation}
		\label{omega_asymp}
		\omega_n = \frac{\Gamma\left(\frac{n}{2} + 1\right)^{\frac{1}{n}}}{\sqrt{\pi}} \sim \sqrt{\frac{n}{2\pi e}}
	\end{equation}
	For an arbitrary $\alpha \in (0; 1)$ define
	\begin{equation}
		\label{r_n_def}
		r_n = \alpha \sqrt{\frac{n}{2\pi e}}
	\end{equation}
	By (\ref{omega_asymp}) we have
	\[
		\lim_{n \to \infty} \frac{r_n}{\omega_n} = \alpha < 1
	\]
	If by $V_n$ we denote the volume of the $n$-ball of radius $r_n$, then
	\begin{equation}
		\label{v_n_lim}
		\lim_{n \to \infty} V_n = 0
	\end{equation}

	Now consider an arbitrary unit-volume convex body $K$. For each point $p \in K$ the set of points of $K$ that are at a distance at most $r_n$ from $p$ is of volume not greater than $V_n$, so the average distance $d(K)$ between points in $K$ should be at least
	\[
		d(K) \geq (1 - V_n) r_n
	\]
	And by (\ref{r_n_def}), (\ref{v_n_lim})
	\[
		(1 - V_n) r_n \sim \alpha \sqrt{\frac{n}{2\pi e}}
	\]

	Since the choice of $\alpha \in (0; 1)$ above was arbitrary, for a family of unit-volume convex bodies $K_n$ we should have
	\[
		\liminf_{n \to \infty} \frac{d(K_n)}{\sqrt{\frac{n}{2\pi e}}} \geq 1
	\]
\end{document}